\newcommand{\M}{\mathcal{M}}
\newcommand{\X}{\mathcal{X}}
\newcommand{\Y}{\mathcal{Y}}
\newcommand{\cL}{L}
\newcommand{\cH}{\mathcal{H}}
\newcommand{\cP}{\mathcal{P}}
\newcommand{\cQ}{\mathcal{Q}}
\newcommand{\cN}{\mathcal{N}}
\newcommand{\N}{\mathbb{N}}
\newcommand{\R}{\mathbb{R}}
\newcommand{\al}{\alpha}
\newcommand{\be}{\beta}
\newcommand{\ga}{\gamma}
\newcommand{\de}{\delta}
\newcommand{\e}{\varepsilon}
\newcommand{\fy}{\varphi}
\newcommand{\om}{\omega}
\newcommand{\la}{\lambda}
\newcommand{\te}{\theta}
\newcommand{\s}{\sigma}
\newcommand{\ta}{\tau}
\newcommand{\ka}{\kappa}
\newcommand{\x}{\xi}
\newcommand{\y}{\eta}
\newcommand{\z}{\zeta}
\newcommand{\De}{\Delta}
\newcommand{\Om}{\Omega}
\newcommand{\Ga}{\Gamma}
\newcommand{\La}{\Lambda}
\newcommand{\p}{\partial}
\newcommand{\na}{\nabla}
\newcommand{\Cu}{\bigcup}
\newcommand{\Ca}{\bigcap}
\newcommand{\weak}{\operatorname{w-}}
\newcommand{\supp}{\operatorname{supp}}
\newcommand{\sign}{\operatorname{sign}}
\newcommand{\Span}{\operatorname{span}}
\newcommand{\lec}{\lesssim}
\newcommand{\gec}{\gtrsim}
\newcommand{\etc}{,\ldots,}
\newcommand{\I}{\infty}
\newcommand{\empt}{\varnothing}
\newcommand{\ti}{\widetilde}
\newcommand{\ba}{\overline}
\newcommand{\U}{\underline}
\newcommand{\BR}[1]{\left[#1\right]}
\newcommand{\LR}[1]{{\langle #1 \rangle}}
\newcommand{\tf}{\tfrac}
\newcommand{\tand}{\ \text{ and }\ }
\newcommand{\tor}{\ \text{ or }\ }
\newcommand{\IN}[1]{\text{ in }#1}
\newcommand{\Del}[1]{}
\newcommand{\CAS}[1]{\begin{cases} #1 \end{cases}}
\newcommand{\mat}[1]{\begin{pmatrix} #1 \end{pmatrix}}
\newcommand{\bmat}[1]{\begin{bmatrix} #1 \end{bmatrix}}
\newcommand{\smat}[1]{\left[\begin{smallmatrix} #1 \end{smallmatrix}\right]}
\newcommand{\EQ}[1]{\begin{equation}\begin{split} #1 \end{split}\end{equation}}
\newcommand{\pt}{&}
\newcommand{\pr}{\\ &}
\newcommand{\pq}{\quad}
\newcommand{\pn}{}
\newcommand{\prq}{\\ &\quad}
\newcommand{\prQ}{\\ &\qquad}
\numberwithin{equation}{section}
\newtheorem{theorem}{Theorem}[section]
\newtheorem{prop}[theorem]{Proposition}
\newtheorem{lemma}[theorem]{Lemma}
\theoremstyle{remark}
\newtheorem{remark}{Remark}[section]
\newcommand{\sL}{\vec{L}}
\newcommand{\bu}{\bullet}
\newcommand{\xu}{{\bf u}}
\newcommand{\xc}{{\bf c}}
\newcommand{\xcu}{{\bf cu}}
\newcommand{\xscu}{{\bf eig}}
\newcommand{\eN}{{\bf q}}
\newcommand{\Si}{\Sigma}
\newcommand{\op}{\operatorname{op}}
\newcommand{\sN}{\mathcal{E}}
\newcommand{\sI}{\mathscr{I}}
\newcommand{\ci}{\circ}
\newcommand{\ck}{\check}
\newcommand{\mm}{\mathbbm{m}}
\newcommand{\dia}{\lozenge}
\newcommand{\ap}{\mathbbm{a}}
\newcommand{\he}{\heartsuit}
\newcommand{\dbr}[1]{\llbracket #1 \rrbracket}
\newcommand{\cI}{\mathcal{I}}
\begin{document}
\subjclass{Primary 35L71; Secondary 37L25, 35B40, 35B30}
\keywords{Nonlinear Klein-Gordon equation, solitons, blow-up, invariant manifolds}
\title[Global dynamics around 2-solitons]{Global dynamics around 2-solitons for the nonlinear damped Klein-Gordon equations}
\author{Kenjiro Ishizuka}
\author{Kenji Nakanishi}
\address{Research Institute for Mathematical Sciences, Kyoto University}
\thanks{This work was supported by JSPS KAKENHI Grant Number JP17H02854.}

\begin{abstract}
Global behavior of solutions is studied for the nonlinear Klein-Gordon equation with a focusing power nonlinearity and a damping term in the energy space on the Euclidean space. We give a complete classification of solutions into 5 types of global behavior for all initial data in a small neighborhood of each superposition of two ground states (2-solitons) with the opposite signs and sufficient spatial distance. 
The neighborhood contains, for each sign of the ground state, the manifold with codimension one in the energy space, consisting of solutions that converge to the ground state at time infinity. 
The two manifolds are joined at their boundary by the manifold with codimension two of solutions that are asymptotic to 2-solitons moving away from each other. 
The connected union of these three manifolds separates the rest of the neighborhood into the open set of global decaying solutions and that of blow-up. 
The main ingredient in the proof is a difference estimate on two solutions starting near 2-solitons and asymptotic to 1-solitons. The main difficulty is in controlling the {\it direction} of the two unstable modes attached to 2-solitons, while the soliton interactions are not uniformly integrable in time. It is resolved by showing that the non-scalar part of the interaction between the unstable modes is uniformly integrable due to the symmetry of the equation and the 2-solitons. 
\end{abstract}

\maketitle
\tableofcontents

\section{Introduction}
We investigate global behavior of solutions for the nonlinear damped Klein-Gordon equation on $\R^N$: 
\begin{equation}
{\partial}_t^2u+2\alpha {\partial}_tu-\Delta u+u = f(u)  \label{NLKG}
\end{equation}
where $u(t,x):\R^{1+N}\to\R$, $\alpha>0$ is a damping constant, and $f(u):=|u|^{p-1}u$ with a power $p$ in the energy sub-critical range, namely
\EQ{
 2<p<p^\star(N):=\CAS{\infty &(N=1,2), \\ \frac{N+2}{N-2} &(N\ge 3).}}
The restriction $p>2$ implies $N\le 5$. 
For any space-time function $u(t,x)$, we denote 
\EQ{
 \vec u(t,x) := (u(t,x),\dot u(t,x)) \in \R^2.}
The Cauchy problem for \eqref{NLKG} is locally well-posed in the energy space 
\EQ{
 \mathcal{H}:=H^1(\R^N)\times L^2(\R^N)} 
(cf.~\cite{BRS}): For any initial data $\vec u(0)\in\cH$, there is a unique solution $\vec u\in C(I;\cH)$ of \eqref{NLKG} on the maximal existence interval $I$. 
The energy functional $E:\cH\to\R$ for \eqref{NLKG} is defined by 
\begin{equation}
 E(u_1,u_2)=\frac{1}{2}\int_{{\mathbb{R}}^N}\{ |\nabla u_1|^2+|u_1|^2+|u_2|^2-2F(u_1)\}dx, \label{energy}
\end{equation}
where $F(u):=\tf{|u|^{p+1}}{p+1}$ is the primitive of $f$. 
For any solution $\vec{u}$ of \eqref{NLKG} in the energy space, the energy is non-increasing in time:
\begin{equation} 
E(\vec{u}(t_2))-E(\vec{u}(t_1))=-2\alpha \int_{t_1}^{t_2}\| {\partial}_tu(t)\|_{L^2}^2dt. \label{Edecay}
\end{equation}
When the nonlinearity is defocusing, namely $f(u)=-|u|^{p-1}u$ or $F(u)=-\tf{|u|^{p+1}}{p+1}$, the above decay of energy leads rather immediately to global existence and exponential decay of all solutions in the energy space. 

However, when the nonlinearity is focusing, $f(u)=|u|^{p-1}u$, which is the case throughout this paper, the equation has more variety of solutions, including blow-up and stationary solutions. 
In this paper, we are interested in solutions starting close to superposition of stationary solutions with the least energy, namely the ground states. 
The ground state exists uniquely \cite{BL,K} up to spatial translation and change of the sign. 
Let $Q$ be the unique positive radial one: 
\begin{equation}
 0<Q(x)=Q(|x|)\in H^1(\R^n), \pq -\Delta Q+Q-f(Q)=0. 
\end{equation}

There has been intensive study on global behavior of general (large) solutions for the nonlinear dispersive equations, where the guiding principle is the {\it soliton resolution conjecture}, which claims that generic global solutions are asymptotic to superposition of solitons for large time. In particular, the conjecture has been proven for all radial solutions of the energy-critical wave equation:
\EQ{
 \ddot u - \De u = |u|^{p^\star(N)-1}u,\pq (N\ge 3),}
including the case of blow-up with bounded energy norm, 
by Duyckaerts, Kenig and Merle \cite{DKM,DKM2} for $N=3,5,7\etc$ and by Duyckaerts, Kenig, Martel and Merle \cite{DKMM} for $N=4$. Similar results have been obtained also in the case with a potential \cite{JLX}, as well as for the energy-critical wave maps \cite{KLLS,DKMM}, all under some rotational symmetry. 

However, the conjecture is widely open for other dispersive equations without size restriction on the initial data, except for the completely integrable case. In particular, for the nonlinear Klein-Gordon equation: 
\EQ{
 \ddot u - \De u + u = |u|^{p-1}u,}
general asymptotic behavior is unknown if the energy is much bigger than the ground state (for example, in the classification of solutions by \cite{NS} into 9 sets, the energy may exceed only slightly the ground state). 
The current state is the same for the nonlinear Schr\"odinger equation. 
This motivates us to look into other PDEs with similar solutions but easier to analyze. 

For the damped nonlinear Klein-Gordon \eqref{NLKG}, the energy decay \eqref{Edecay}, together with the exponential decay for the linear equation, makes the analysis simpler than the undamped case, although the damping does not remove the blow-up or solitons. 
Much earlier than the recent progress for the conservative PDEs, Keller \cite{Ke} constructed stable and unstable manifolds in a neighborhood of any stationary solution of \eqref{NLKG}, and 
Feireisl \cite{F} proved the soliton resolution along a time sequence : for any global solution $u$ of \eqref{NLKG} in the energy space, there is a time sequence $t_n\to\I$ and a set of stationary solutions $\{\fy_k\}_{k=1}^K$ ($K\ge 0$), and sequences $\{c_{k,n}\}_{k=1\etc K,n\in\N}$ such that 
\EQ{ \label{conv seq}
 \vec u(t_n) = \sum_{k=1}^K \vec\fy_k(x-c_{k,n}) + o(1) \pq \min_{j\not=k}|c_{j,n}-c_{k,n}|\to\I}
as $n\to\I$ in the energy space (under some restriction on $p$). 
The energy decay \eqref{Edecay} reduces the proof mostly to the variational analysis of concentration compactness \cite{L}, independent of time. 
Note that for the conservative PDEs, this type of results (in appropriate topology) also requires much more analysis on the evolution (cf. \cite{DJKM} in the non-radial case of the energy-critical wave equation). 

More recently, Burq, Raugel and Schlag \cite{BRS} proved the soliton resolution for \eqref{NLKG} in the full limit $t\to\I$ for all radial solutions. 
On the other hand, C\^{o}te, Martel, Yuan and Zhao \cite{CMYZ} constructed a Lipschitz manifold in the energy space with codimension $2$ of those solutions asymptotic to a sum of two ground states 
\EQ{ \label{2soli}
 \vec u(t) = \vec Q(x-z_1(t)) - \vec Q(x-z_2(t)) + o(1), \pq |z_1(t)-z_2(t)|\to\I \pq(t\to\I).}
More generally, one may consider the following types of solutions in the energy space
\begin{itemize}
\item We say a solution $\vec{u}$ in $\cH$ of \eqref{NLKG} is {\it decaying} if it extends globally for $t>0$ and converges to $0$ strongly in $\mathcal{H}$ as $t\to \infty$. 
\item We say a solution $\vec u$ in $\cH$ of \eqref{NLKG} {\it blows up} if it exists up to some $T\in(0,\I)$ and $\|\vec u(t)\|_\cH\to\I$ as $t\to T-0$. 
\item A solution $\vec{u}$ in $\cH$ of \eqref{NLKG} is called {\it an asymptotic $K$-soliton} for $K\in\N$ if it extends globally for $t>0$ and there exist $\s\in\{\pm 1\}^K$ and $z:[0,\I)\to(\R^N)^K$ such that
\EQ{  \label{asy Ksol}
\lim_{t\to \infty} \|\vec u(t,x)-\sum_{j=1}^K{\sigma}_j\vec Q(x-z_j(t))\|_\cH=0, \pq\lim_{t\to\I}\min_{j\not=k}|z_j(t)-z_k(t)|=\I.}
\end{itemize}

In the one dimensional case $N=1$, 
C\^{o}te, Martel and Yuan \cite{CMY} proved that the above three types exhaust all solutions in the energy space. 
To the best of our knowledge, this is the first complete proof of the soliton resolution conjecture for a non-integrable equation without any size or symmetry restriction and with moving solitons, provided that the damping is acceptable in the conjecture. 

\begin{remark}
It may be slight abuse of words to call those solutions {\it solitons}, because they are unstable, 
but we use this terminology as it is convenient and getting quite common in the analysis of nonlinear wave equations. 
For $N\ge 2$, one may also consider superpositions of other stationary solutions than the ground state $Q$. In this paper, however, {\it $K$-solitons} refer merely to superpositions of the ground state, namely 
\EQ{
 \sum_{k=1}^K \s_k \vec Q(x-z_k)}
for any $\s\in\{\pm 1\}^K$ and $z\in(\R^N)^K$. The center points $\{z_k\}$ of solitons may depend on time, but not necessarily be defined for all $t\to\I$. Note that in \cite{CMY} those solutions satisfying \eqref{asy Ksol} are called $K$-solitons. 
This is because the more focus is put in this paper on the intermediate and transitive dynamics than the asymptotic behavior. 
\end{remark}

Once we know all the possible asymptotic behavior (or even before that), a natural question is which initial data lead to each type of solutions. 
Recall that if the energy is less than the ground state $Q$, then the variational character of the ground state implies the dichotomy into decaying solutions and blow-up solutions by the sign of the initial integral, namely the Nehari functional  
\EQ{
 K_0(u) := \int_{\R^N}[|\na u|^2 + |u|^2 - |u|^{p+1}]dx.}
If $E(\vec u(0))<E(\vec Q)$, then $K_0(u(0))\ge 0$ implies the global existence, while $K_0(u(0))<0$ implies the blow-up. 
As soon as the energy exceeds the ground state, it seems hard to find such an explicit characterization. However, one may still expect some qualitative as well as quantitative relations and classifications among different types of solutions. 
For the undamped equation, Schlag and the second author \cite{NS} obtained such a classification into 9 sets in terms of the invariant manifolds of the ground states for $E(\vec u)<E(\vec Q)+\e$ with small $\e>0$. 

The goal of this paper is to obtain a similar classification around the 2-solitons \eqref{2soli}, exploiting the damping, and investigating all the solutions starting close to but not on the manifold in \cite{CMYZ}. 
The main interest is in the transition from 2-solitons to 1-solitons, both by time evolution and by initial perturbation. 

\subsection{The main result}
To state the main result, we need to introduce the spectral decomposition around the ground state $Q$. The linearized operator around $Q$ of the stationary equation is denoted by 
\begin{equation}
 \cL:=-\Delta+1-f'(Q) = -\De+1-pQ^{p-1}.
\end{equation}
It is well-known (cf.~\cite{CGNT}) that $\cL$ has a unique negative eigenvalue,  and that its kernel is generated exactly by the translation invariance: $\cL^{-1}(\{0\})=\Span\{\na Q\}$. 
We denote by $\phi$ the normalized ground state of $\cL$, and the eigenvalue by $-\nu_0^2$ for a constant $\nu_0>0$
\EQ{
 \cL \phi = -\nu_0^2 \phi, \pq 0<\phi=\phi(|x|) \in H^2(\R^N), \pq \|\phi\|_{L^2(\R^N)}=1.}
The linearized evolution around $\vec Q$ is given by 
\EQ{ \label{linev}
 \dot v = J\sL^\al v, \pq J:=\bmat{0 & 1 \\ -1 & 0}, \pq \sL^\al:=\bmat{\cL & 2\al \\ 0 & 1}.}
Let 
\EQ{
 \pt \nu^{\pm}:=-\alpha \pm \sqrt{{\alpha}^2+{\nu}_0^2}\in\R, 
 \pq Y^\pm:=\smat{1 \\ \nu^\pm}\phi\in\cH.}
Then they are eigenfunctions of $J\sL^\al$ and $J^*(\sL^\al)^*$, namely
\EQ{
 (J\sL^\al-\nu^\pm) Y^\pm = 0 =
 (J^*(\sL^\al)^*-\nu^\pm) Y^\mp.}
Since ${\nu}^+>0$, the solution $v=e^{\nu^+t} Y^+$ of \eqref{linev} exhibits the exponential instability of the ground state $\vec Q$. In particular, we see that the presence of the damping $\alpha>0$ in the equation does not remove the exponential instability. 

Define the symplectic form $\om:\cH^2\to\R$ by 
\EQ{
 \om(\fy,\psi) := \LR{J\fy,\psi} = \int_{\R^N}[\fy_2(x)\psi_1(x)-\fy_1(x)\psi_2(x)]dx=  -\om(\psi,\fy),}
where $\LR{\cdot,\cdot}$ denotes the real inner product on $L^2(\R^N)$. 
For any $z=(z_1,z_2)\in(\R^N)^2$ and $\de>0$, the open ball in the center-stable subspace around the 2-soliton at $z$ is denoted by 
\EQ{
 \Y^\xu_\perp(z;\de) := \{\fy\in\cH\mid \forall k=1,2,\ \om(\fy,Y^-(x-z_k))=0,\ \|\fy\|_\cH<\de\}.}
It is easy to see that the energy space may be decomposed as 
\EQ{
 \cH = \R Y^+(x-z_1) \oplus \R Y^+(x-z_2) \oplus \Y^\xu_\perp(z;\I)}
provided that $|z_1-z_2|$ is large enough. 
The main result of this paper is as follows.
\begin{theorem} \label{thm:main}
For any $N\in\N$, $\al>0$ and $p\in(2,p^\star(N))$, there exists $\de\in(0,1)$ such that for any $\s_1,\s_2\in\{\pm 1\}$ and any $z=(z_1,z_2)\in(\R^N)^2$ satisfying
\EQ{
 \s_1\s_2=-1, \pq |z_1-z_2|>1/\de,}
there are two Lipschitz functions $G_1,G_2:(-\de,\de)\times \Y_\perp^\xu(z;\de)\to(-\de,\de)$ with the following properties. 
For any $h_1,h_2\in(-\de,\de)$ and any $\fy\in\Y_\perp^\xu(z;\de)$, 
let $u$ be the solution of \eqref{NLKG} with the initial data 
\EQ{ \label{2sol init}
 \vec u(0) = \sum_{j=1}^2 \s_j[Q+h_j Y^+](x-z_j)  + \fy.}
Then its global behavior is classified by the initial data as follows. 
Let $j^*:=3-j$ denote the conjugate index. 
\begin{enumerate}
\item If $h_j<G_j(h_{j^*},\fy)$ for both $j=1,2$ then $u$ is global and decaying.  \label{van}
\item If $h_j=G_j(h_{j^*},\fy)$ and $h_{j^*}<G_{j^*}(h_j,\fy)$ for one of $j=1,2$, then $u$ is global and 
\EQ{
 \vec u(t)=\s_j\vec Q(x-z_\I)+o(1) \IN{\cH}} 
as $t\to\I$ for some $z_\I\in\R^N$, namely an asymptotic $1$-soliton. \label{1sol}
\item If $h_j=G_j(h_{j^*},\fy)$ for both $j=1,2$ then $u$ is global and 
\EQ{
 \vec u(t)=\s_1\vec Q(x-z_1(t))+\s_2 \vec Q(x-z_2(t))+o(1) \IN{\cH}} 
as $t\to\I$ for some $z\in C^1([0,\I);(\R^N)^2)$ satisfying $|z_1(t)-z_2(t)|\to\I$, namely an asymptotic $2$-soliton. \label{2sol}
\item Otherwise, $u$ blows up in finite time. 
\end{enumerate}
The case (3) may be characterized simply as $(h_1,h_2)=G_0(\fy)$ for another Lipschitz function $G_0:\Y^\xu_\perp(z;\de)\to(-\de,\de)^2$. 
\end{theorem}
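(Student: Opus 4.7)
My plan is to set up a modulation decomposition for solutions in the $2$-soliton tube, establish an ejection dichotomy driven by the two unstable coefficients attached to the two solitons, and close with a shooting argument in the two-dimensional unstable parameter $(h_1,h_2)$.

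First, for any solution $\vec u$ remaining in a small tube around the given $2$-soliton, I would impose the modulation
\[
\vec u(t) = \sum_{j=1,2}\sigma_j\bigl[Q + h_j^+(t)Y^+ + h_j^-(t)Y^-\bigr](x-z_j(t)) + \eta(t),
\]
where $z_j(t)\in\R^N$, $h_j^\pm(t)\in\R$, $\eta(t)\in\cH$ are fixed by requiring $\eta$ to be symplectically orthogonal to $Y^\pm(x-z_j)$ and to $\p_{x_k}Q(x-z_j)$. At $t=0$ this reduces to the parameters in \eqref{2sol init}. Substituting into \eqref{NLKG} yields a coupled system: $\dot h_j^\pm \approx \nu^\pm h_j^\pm$ plus soliton-interaction forcing and quadratic remainders in $(h_j^\pm,\|\eta\|_\cH)$, while $\eta$ solves a damped Klein--Gordon equation whose projected linearized energy is coercive and decays exponentially modulo source terms.

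The core analytical step, and the one I expect to be the main obstacle, is the interaction coupling between the two unstable modes $Y^+(x-z_j)$. Along trajectories of interest, $|z_1-z_2|$ grows only logarithmically, so the pointwise interaction kernel is not uniformly integrable in time and a naive bound cannot close the bootstrap for $(h_1^+,h_2^+)$. Following the strategy flagged in the abstract, I would split the interaction into a scalar part (pointing along $Y^+$) and a non-scalar remainder: the evenness of $Q,\phi$ together with the opposite-sign condition $\sigma_1\sigma_2=-1$ forces the scalar part to act on each $h_j^+$ as a transient correction of $\nu^+$ without coupling the two unstable modes, while the non-scalar remainder is shown to be uniformly integrable in time. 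Granted this, a bootstrap/trapping argument in the spirit of \cite{NS} gives the ejection dichotomy: as long as $(h_1^+,h_2^+)$ remain comparable to the other perturbative quantities the solution stays in the tube, but once $|h_j^+|$ dominates it grows exponentially with frozen sign and the solution exits. The exit direction, indexed by $(\sign h_1^+,\sign h_2^+)$, then determines the asymptotic type via a variational/virial argument as in \cite{NS}: a positive sign on index $j$ drives the $j$-th soliton profile to blow-up, while a negative sign drives it to disperse.

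Finally, to construct the graphs $G_j$ I would fix $(h_{j^*},\varphi)$ and shoot in $h_j$. The set of $h_j$ giving non-blow-up is a half-line and the set giving non-decay is a half-line, monotonicity coming from comparing two such solutions: their difference has a $\nu^+$-unstable mode along $Y^+(x-z_j)$, so unless the two $h_j$ coincide the difference cannot remain bounded, yielding a unique common endpoint $G_j(h_{j^*},\varphi)$. The same difference estimate under perturbation of $(h_{j^*},\varphi)$ gives Lipschitz regularity. The intersection $\{h_j=G_j(h_{j^*},\varphi):j=1,2\}$ is then a Lipschitz graph $(h_1,h_2)=G_0(\varphi)$ realizing case (3) and recovering the codimension-$2$ manifold of \cite{CMYZ}; the remaining classification follows by combining the ejection directions with the sign of $h_j-G_j(h_{j^*},\varphi)$, giving case (1) when both signs are $-$, case (2) when one is $0$ and the other is $-$, and case (4) whenever at least one sign is $+$, thus producing the full five-type classification.
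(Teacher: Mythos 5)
Your overall strategy --- modulation around the moving $2$-soliton, an ejection dichotomy driven by the two unstable coefficients, and a shooting/difference argument producing the Lipschitz graphs --- is the same as the paper's. There are, however, two genuine gaps in the way you propose to close it.

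First, the step ``a positive sign on index $j$ drives the $j$-th soliton profile to blow-up, while a negative sign drives it to disperse'' cannot be obtained by a variational/virial argument applied to $u$ itself: after ejection the energy of $u$ is still close to $2E(\vec Q)$, well above the ground state, so the Nehari/Payne--Sattinger dichotomy does not apply to the whole solution. The paper resolves this by finite speed of propagation: it cuts the data into pieces $\chi_k\vec u(0)$ supported near each soliton, evolves each piece as a separate solution $u_k$ whose energy is near $E(\vec Q)$, applies the below-ground-state dichotomy (with uniform-in-time bounds on the decay and blow-up times, Lemmas \ref{lemma2.2} and \ref{lem:bup destiny}) to each $u_k$, and glues back via the functional $\cP$. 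The same localization is indispensable for your uniqueness/Lipschitz argument for $G_j$ in case \eqref{1sol}: two candidate asymptotic $Q_1$-solitons differ by a quantity whose unstable component at $z_1$ must be shown to keep growing exponentially \emph{through} the epoch in which the $Q_2$-soliton collapses; this transition estimate (Sections \ref{sect:collapse}--\ref{sect:diff col} of the paper) is not a perturbation of the $2$-soliton linearized flow and is not supplied by your sketch, which only compares solutions while both stay in the tube.

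Second, your justification that the non-scalar part of the unstable-mode interaction is integrable is too quick. The diagonal entries of the $2\times2$ interaction matrix for the difference equation are of the form $\LR{f''(Q_k)(v_1+Q_\Si-Q_k)|\phi_k^2}$; they depend on the solution $v$ and are individually only $O(1/t)$ along soliton-dominated trajectories. Evenness of $Q,\phi$ and $\s_1\s_2=-1$ reduce their \emph{difference} to a pairing with the even part $v+\sI v$ of the solution about the soliton midpoint, but the time-integrability of $\|v+\sI v\|_\cH$ is a dynamical estimate, not an algebraic symmetry statement: one must show that the barycenter is nearly frozen, that the leading forcing $\vec N^0$ is odd under the inversion $\sI$ up to $O(\eN(D_z)^2)$ errors, and then run the damped energy inequality on $\ga+\sI\ga$. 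This is the content of Section \ref{sect:even decay} and is the main new estimate of the paper; as written, your proposal treats it as an immediate consequence of the evenness of the profiles.
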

\begin{remark}
\cite{CMYZ} proved that $\s_1\s_2=-1$ is necessary for any asymptotic $2$-soliton to exist. Note also that the two cases with $j=1$ and with $j=2$  in \eqref{1sol} are distinguished by the sign of the remaining soliton $\s_j Q$. 
Hence the above theorem yields a complete classification into 5 non-empty and disjoint sets of solutions starting from a small neighborhood of any asymptotic 2-soliton, or any superposition of two ground states with the opposite signs and sufficient distance. This is an extension of the result in \cite{CMYZ} that established the case of \eqref{2sol}, namely the manifold of asymptotic 2-solitons (without classifying the other cases). 
\end{remark}

In short, we have two hypersurfaces of 1-solitons respectively with $\vec Q$ and $-\vec Q$, given by the Lipschitz graphs $h_j=G_j(h_{j^*},\fy)$ with $h_{j^*}<G_{j^*}(h_j,\fy)$ for $j=1,2$, in the small neighborhood of $\cH$ around each $2$-soliton of the opposite signs and sufficient distance. 
Their joint boundary $h=G_0(\fy)$ is the manifold of 2-solitons constructed in \cite{CMYZ}, and the union of those three separates the rest into the decaying solutions and the blow-up. 

\subsection{Difficulties and ideas for the main result}
The 1-soliton manifolds in the above result are different from those constructed by Keller \cite{Ke}, or any general method that works locally around stationary points, because the manifolds in the above result are approaching the 2-solitons, which are far from any $1$-soliton, while the solutions on the manifolds are asymptotic to 1-solitons as $t\to\I$. In particular, the global behavior is not uniform at all in the small neighborhood of $2$-solitons; indeed, the above result precisely describes the discontinuity of the asymptotic behavior around the 2-solitons. 

In order to analyze the transition between different numbers of solitons, we exploit the finite speed of propagation for \eqref{NLKG} and some uniform estimates for solutions below the ground state. 
It is combined with the linearized analysis around the solitons, taking account of the soliton interactions that have been investigated in \cite{CMYZ}. 

The crucial difference in the linearized analysis both from \cite{CMYZ} and from the case of 1-solitons, is that we have to distinguish the growth {\it direction} of the unstable modes $(h_1,h_2)\in\R^2$, in order both to distinguish the 5 cases and to construct the 1-soliton manifolds all the way to the 2-soliton manifold. 
Even though the conclusion is that the unstable mode does not essentially change the direction, it is highly non-trivial, because the soliton interaction generates $O(1/t)$ error terms from the linearized operator, which do change the growth {\it order} of unstable modes from the linear approximation. 
A simple ODE example to illustrate the problem is 
\EQ{
 \frac{d}{dt}\mat{h_1 \\ h_2} = \mat{\nu^+ & \e^2 e^{-t} \\ \e^2 e^{-t} & \nu^++\frac{\e}{1+t}}\mat{h_1 \\ h_2},}
which mimics the linearized interaction between the two unstable modes. 
It is easy to observe that if $\e>0$ then $h_2(t)/h_1(t)\to\I$ as $t\to\I$ even if it starts with $(h_1(0),h_2(0))=(1,0)$, 
and if $\e<0$ then $h_1(t)/h_2(t)\to\I$ even if $(h_1(0),h_2(0))=(0,1)$. 

Such rotational behavior of the unstable modes is precluded by showing that the non-scalar part of the above matrix is actually uniformly integrable and small, due to the symmetry of the equation as well as the 2-solitons. 
Note however that we need this information not only for the asymptotic 2-solitons but also for those on the 1-soliton manifolds. 
This also requires a detailed description of solutions, as the interaction matrix depends highly on the individual solutions, and their estimates are far from being uniform in time. 

\subsection{Difficulty for more solitons}
After the above result, a natural question would be what happens around the 3-solitons or more. 
Although one might expect a similar structure as boundaries of manifolds with less solitons, the analysis has to face a new and much harder phenomenon, which may be called the {\it soliton merger}. 
A crucial property of the 2-solitons behind the proof of the above result is that the solitons with the opposite signs are repelling each other, so they never approach each other as long as both exist. 
This property is lost as soon as the number of solitons becomes $3$ or more. 
For example, \cite{CMY} constructed the asymptotic 3-solitons in the form on $\R$ ($N=1$)
\EQ{
 \vec u(t) = \vec Q(x+c(t)) - \vec Q(x) + \vec Q(x-c(t)) + o(1), \pq c(t)\to\I,}
and also proved that it is essentially the unique form (up to symmetry). 
If only the middle soliton is collapsed by the instability and disappears (decays), then after that the solution contains only the two solitons with the same signs, which attract each other. 
By using the soliton resolution of \cite{CMY}, as well as stability of decaying and blow-up, it is easy to observe that there are solutions in which the middle soliton collapses first, and afterward the other two solitons together, but eventually a soliton in the middle appears as the final state. 
This behavior is essentially different from the 1-soliton case \eqref{1sol} in Theorem \ref{thm:main}, because the final soliton is not a remaining one of the three starting ones, but created by merger of the two of them. 
It seems very hard to analyze the behavior of such solutions.  

\subsection{Structure of the paper}
In Section 2, some basic properties are recalled about the ground state and the solutions below it, in particular, uniform-in-time estimates for decaying and blow-up solutions. 

The analysis of multi-solitons starts from Section 3. 
Although the main result is about the 2-solitons, we consider the general number $K$ of solitons as much as possible, in order to clarify where the specialty of 2-solitons is used. 
In Section 3, the spectral decomposition is introduced, as well as the corresponding system of equations in terms of the unstable modes and the remainder (radiation). 
In Section 4, the orthogonality condition is introduced to determine the soliton centers, as well as the corresponding modulation equation. 
Those arguments are well known and mere reformulations from \cite{CMYZ} in slightly different coordinates. 
In Section 5, the basic estimates are prepared, one is using the static energy, the other is using the linear damped energy inequality. 
In the case of higher powers, we need to combine also the Strichartz estimate. 
We also introduce the soliton repulsivity conditions, which are abstract and sufficient conditions to avoid soliton merger or collision in the case of general number of solitons.  

The core analysis and the main novelty of this paper starts from Section 6. 
In Section 6, a complete classification and description is given on the behavior of solutions during they stay in a small neighborhood of multi-solitons (which are moving according to the modulation in Section 4). 
This is written for general $K$-solitons, but under the soliton repulsivity conditions, which are satisfied in the 2-soliton case. 
In Section 7, we prove the crucial estimate to prove that the unstable modes do not essentially change the direction. 
It is about the even symmetric part of the solution, for which we need to specialized to the case of 2-solitons with the opposite sign. 
In Section 8, the convergence to $K$-solitons is proven for solutions staying forever around them. 
This is a rather immediate consequence of \eqref{conv seq}. 

In Section 9, the difference of two solutions is estimated during both of them are staying near $K$-solitons. 
Here is the crucial step of preserving the unstable direction, using the estimate in Section 7. 
In Section 10, the collapse of solitons is described, both in the case of decay and of blow-up, using the finite speed of propagation, and assuming that the solitons are separated enough from each other. 
Then in Section 11, the difference of two solutions is estimated in the case where both are collapsing some solitons in the same way. 

Gathering all these estimates, we finally prove the main result in Section 12. 

In Section 13, $K$-solitons with $K\ge 3$ are considered under some symmetry restrictions on the solutions. 
The purpose of this section is to show cases with the soliton repulsivity conditions and $K\ge 3$. 
At the end is a brief explanation on the soliton merger in the simplest case of the symmetry. 

\subsection{Notation}
Let $\{e_1\etc e_N\}$ denote the canonical basis of $\R^N$. 
The $L^2$ scalar product is denoted for any pair of functions $u,v\in L^2(\R^N)$ by
\EQ{
 \LR{u,v}:=\int_{\R^N} u(x)v(x)dx.}

Positive constants are denoted by $C$, which may change from line to line. Subscripts are added to
distinguish different constants. Note that $N,\al,p,K$ are regarded as constants.
$X\lec Y$ means that $X\le CY$ for some constant $C>0$. $X\sim Y$ means that $X\lec Y$ and $Y\lec X$.
$X\ll Y$ means that there is a sufficiently small constant $c>0$ such that $X\le cY$.
The meaning of sufficiency should be clear from the context, or clarified later in more explicit form. 

For any Banach space $B$, the same space with the weak topology is denoted by $\weak{B}$. The operator norm for linear bounded operators on $B$ is denoted by $\op(B)$, and simiarly by $\op(B_1\to B_2)$ for those from $B_1$ to $B_2$.  
For any open set $\Om\subset\R^N$, the energy space is denoted by $\cH(\Om):=H^1(\Om)\times L^2(\Om)$.

\section{Basic properties of the ground state}
In this section, we collect well-known properties of the ground state $Q$. First, it is obtained by the direct variational method for the constrained minimization 
\EQ{ \label{gsedef}
 E(\vec Q) = \inf\{E(\vec\fy) \mid \fy\in H^1(\R^N)\setminus\{0\},\ K_0(\fy)=0\},}
where $K_0:H^1(\R^N)\to\R$ is the Nehari functional defined by 
\EQ{
 K_0(\fy) :=\int_{\R^N} |\na\fy|^2+|\fy|^2-|\fy|^{p+1}dx = \p_{\la=1}E(\la\vec\fy).}
In particular, $K_0(\fy_1)\not=0$ for any $\fy=(\fy_1,\fy_2)\in\cH$ such that $E(\fy)<E(\vec Q)$ and $\fy_1\not=0$. 
Moreover, we have a uniform bound (cf.~\cite[Lemma 2.12]{IMN})
\EQ{ \label{est K0}
 E(\fy)<E(\vec Q)-\de \implies \CAS{K_0(\fy_1) \gec \min\{\de,\|\fy_1\|_{H^1}^2\} \tor\\ K_0(\fy_1)\lec -\de.}}
In particular, we have $K_0\ge 0$ in a neighborhood of $0\in H^1(\R^N)$. 
 
The ground state $Q$ is radial and exponentially decaying. More precisely, 
\EQ{
 Q(x) = \eN(|x|)}
for a $C^\I$ function $\eN:[0,\I)\to(0,\I)$ satisfying 
$\eN(r)>0>\eN'(r)$ and there exists a constant $c_0>0$ depending on $N,p$, such that 
\EQ{\label{asy Q}
 |\eN(r)-c_0 r^{-\frac{N-1}{2}}e^{-r}|+r|\eN'(r)+\eN(r)|\lesssim r^{-\frac{N+1}{2}}e^{-r},}
where the remainder estimate is improved to $r^{-\frac{N-1}{2}}e^{-(p-1)r}$ for $N=1,3$. 
This formula follows immediately from the radial ODE for $\eN$. 

Similarly, the ground state $\phi$ of the linearized operator $\cL$ is radial and exponentially decaying with the better rate than $Q$ (because of the negative eigenvalue)
\EQ{
 |\phi-c_1 r^{-\frac{N-1}{2}}e^{-\LR{\nu_0}r}| + r|\phi_r+\LR{\nu_0}\phi|\lesssim  r^{-\frac{N+1}{2}}e^{-\LR{\nu_0}r},}
for some constant $c_1>0$ depending on $N,p$. $Y^\pm$ have the same asymptotic behavior. 
Moreover, the linearized operator has the coercivity on the subspace orthogonal to $\phi$ and $\na Q$. More precisely, there is some constant $C>0$ depending on $N,p$, such that for all $\varphi \in H^1(\R^N)$, 
\EQ{ \label{L ene}
 \| \varphi \|_{H^1}^2 \sim \LR{\cL\fy|\fy} + C\LR{\fy|\phi}^2 + C\sum_{j=1}^N \LR{\fy|\p_jQ}^2.}

\subsection{Uniform dynamics below the ground state}
It is well known that all the solutions with energy less than the ground state $E(\vec u)<E(\vec Q)$ are split into the decaying (for $K_0(u)\ge 0$) and the blow-up (for $K_0(u)<0$). 
In order to trace the behavior of solutions getting away from $2$-solitons, we need some upper bounds on the time that those solutions take before decaying or blowing-up. They follow essentially from the estimates and arguments in \cite{CMYZ,BRS}. 
Define a functional $\cP:\cH\to\R$ by 
\EQ{
 \cP(\fy):=\LR{\fy_1,\fy_2}+\al\|\fy_1\|_{L^2}^2.}
Then for any solution $u$ of \eqref{NLKG} in $\cH$, 
\EQ{ \label{mass id}
  \p_t \cP(\vec u) = \|\dot u\|_{L^2}^2-K_0(u) = \tf{p+3}{2}\|\dot u\|_{L^2}^2+\tf{p-1}{2}\|u\|_{H^1}^2-(p+1)E(\vec u).}

First in the decaying case, we have the uniform decay rate: 
\begin{lemma}\label{lemma2.2}
For any $N\in\N$, $\al\in(0,\I)$ and $p\in(1,p^\star(N))$, 
there exist $C\in(1,\I)$ and $c\in(0,1)$ such that for any $\de>0$ and any solution $\vec{u}$ of \eqref{NLKG} satisfying $E(\vec u(0))<E(\vec Q)-\de$ and $K_0(u(0))\ge 0$, we have for all $t\ge 0$
\begin{equation} \label{estlemma2.2}
\| \vec{u}(t)\|_{\mathcal{H}}\le Ce^{-c\de t}\|\vec u(0)\|_\cH.
\end{equation}
\end{lemma}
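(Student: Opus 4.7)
The plan is to construct a Lyapunov functional $\Psi:=E(\vec u)+\e\cP(\vec u)$ with a small parameter $\e\in(0,\al)$, and to extract exponential decay from the combination of the energy identity \eqref{Edecay}, the identity \eqref{mass id}, and the coercivity \eqref{est K0} of $K_0$ below $E(\vec Q)-\de$. First, a standard continuity argument shows that the set $\{\fy\in\cH : E(\fy)<E(\vec Q)-\de,\ K_0(\fy_1)\ge 0\}$ is forward-invariant: the energy decay preserves the first condition, and \eqref{est K0} combined with continuity of $K_0(u(t))$ rules out any downward crossing through zero except at $u\equiv 0$, where $K_0\ge 0$ persists locally since $\int|u|^{p+1}$ vanishes to higher order than $\|u\|_{H^1}^2$ by Sobolev.

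Within this invariant set, $K_0\ge 0$ gives $\int|u|^{p+1}\le\|u\|_{H^1}^2$, whence $E(\vec u)\ge\tf{p-1}{2(p+1)}\|u\|_{H^1}^2+\tf12\|\dot u\|_{L^2}^2\gec\|\vec u\|_\cH^2$; together with the trivial bounds $E(\vec u)\le\tf12\|\vec u\|_\cH^2$ and $|\cP(\vec u)|\lec\|\vec u\|_\cH^2$, this yields $\Psi\sim\|\vec u\|_\cH^2$ uniformly, provided $\e>0$ is fixed small depending only on $N,\al,p$. Moreover, the universal a priori bound $\|u\|_{H^1}^2\lec E(\vec u)\le E(\vec Q)$ upgrades \eqref{est K0} to the linear-in-$\de$ form
\begin{equation*}
K_0(u)\gec\min\{\de,\|u\|_{H^1}^2\}\ge\frac{\de\,\|u\|_{H^1}^2}{\de+E(\vec Q)}\gec\de\,\|u\|_{H^1}^2,
\end{equation*}
which is the essential refinement that produces the rate $c\de$ rather than a $\de$-independent one.

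Combining \eqref{Edecay} with \eqref{mass id} gives
\begin{equation*}
\p_t\Psi=-(2\al-\e)\|\dot u\|_{L^2}^2-\e K_0(u)\le-(2\al-\e)\|\dot u\|_{L^2}^2-c\e\de\|u\|_{H^1}^2\le-c'\de\,\Psi,
\end{equation*}
for a universal $c'>0$, after fixing $\e=\al/2$ and using that $\de$ is bounded (so the $\|\dot u\|_{L^2}^2$ coefficient dominates $c'\de$). Gronwall's inequality then yields $\Psi(t)\le\Psi(0)e^{-c'\de t}$, and the equivalence $\Psi\sim\|\vec u\|_\cH^2$ transfers this to \eqref{estlemma2.2}. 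The main obstacle is the quantitative extraction of the factor $\de$ in the decay rate: qualitatively, below $E(\vec Q)$ the solution has nowhere to go but $0$, yet near the ground-state saddle the descent must be slow, and only the refined linear-in-$\de$ coercivity of $K_0$ captures this sharpness. The invariance argument and the Lyapunov construction itself are classical.
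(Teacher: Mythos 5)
Your proof is correct and follows essentially the same route as the paper's: the paper uses the Lyapunov functional $\cP(\vec u)+BE(\vec u)$ with $B=\max(1/\al,\,2\tfrac{p+1}{p-1}(1+2\al))$ large, which is exactly your $E+\e\cP$ with $\e=1/B$, combined with the same linear-in-$\de$ upgrade of \eqref{est K0} coming from the a priori bound $\|u\|_{H^1}^2\lec E(\vec Q)$. The only slip is the final choice $\e=\al/2$: the equivalence $\Psi\sim\|\vec u\|_\cH^2$ additionally requires $\e<\tfrac{p-1}{2(p+1)(\al+1/2)}$ (as your own earlier sentence ``$\e$ fixed small depending only on $N,\al,p$'' anticipates), so $\e$ should be the minimum of the two, matching the paper's $1/B$.
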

\begin{proof}
Let $B:=\max(1/\al,2\tf{p+1}{p-1}(1+2\al))$. 
Then from \eqref{mass id}, \eqref{Edecay} and \eqref{gsedef}, we obtain
\begin{equation} \label{estlemma2.2-1}
 \p_t[\cP(\vec u)+BE(\vec u)]=(1-2\alpha B)\|{\partial}_tu(t)\|_{L^2}^2-K_0(u)\lesssim -\de\| \vec{u}\|_{\mathcal{H}}^2,
\end{equation}
while we have
\EQ{ \label{bd on P}
 |\cP(\vec u)| \le |\LR{u,\dot u}| + \al \|u\|_{L^2}^2 \le (\al+\tf12)\|u\|_{L^2}^2 + \tf12\|\dot u\|_{L^2}^2,}
and
\EQ{
 E(\vec u) = \tf{1}{p+1}K_0(u) + \tf12\|\dot u\|_{L^2}^2 + (\tf12-\tf1{p+1})\|u\|_{H^1}^2 \ge \tf{p-1}{2(p+1)} \|\vec u\|_\cH^2.}
Since $B\ge 2\tf{p+1}{p-1}(1+2\al)$, the above two estimates imply 
\EQ{
 \|\vec u\|_\cH^2 \lec \cP(\vec u) + BE(\vec u) \lec (\al+\al^{-1})\|\vec u\|_\cH^2.}
Injecting this into \eqref{estlemma2.2-1} yields \eqref{estlemma2.2} with $C\sim B^{1/2}$. 
\end{proof}

For the blow-up, we have the following sufficient condition. 
\begin{lemma}
Let $N\in\N$, $\al\in(0,\I)$ and $p\in(1,p^\star(N))$. Let $u$ be a solution of \eqref{NLKG} in $\cH$. Then $u$ blows up if at some $t\ge 0$
\EQ{ \label{bc-P}
 \cP(\vec u(t)) > \tf{p+1}{p-1}(1+2\al) E(\vec u(t)).}
Moreover, this inequality is preserved as long as the solution exists. 
\end{lemma}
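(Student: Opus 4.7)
The plan is to introduce $V(t) := \cP(\vec u(t)) - A\,E(\vec u(t))$ with the threshold $A := \tfrac{p+1}{p-1}(1+2\al)$, so the hypothesis \eqref{bc-P} reads $V(t_0) > 0$; I will then show (i) $V > 0$ is propagated forward in time and (ii) it forces finite-time blow-up.

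\emph{Propagation.} Differentiating $V$ and using \eqref{mass id} together with \eqref{Edecay} gives
\[
V'(t) = (1+2\al A)\|\dot u\|_{L^2}^2 - K_0(u) = \bigl(1+2\al A + \tfrac{p+1}{2}\bigr)\|\dot u\|_{L^2}^2 + \tfrac{p-1}{2}\|u\|_{H^1}^2 - (p+1)E.
\]
Whenever $V>0$ one has $E < \cP/A$, and thanks to the identity $\tfrac{p+1}{A} = \tfrac{p-1}{1+2\al}$ the term $-(p+1)E$ is bounded from below by $-\tfrac{p-1}{1+2\al}\cP$; Young's inequality then gives $\cP \le \tfrac{1+2\al}{2}\|u\|_{L^2}^2 + \tfrac12\|\dot u\|_{L^2}^2$. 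The two $\|u\|_{L^2}^2$ contributions cancel exactly---this cancellation is precisely what fixes the value of $A$---leaving $V'(t) \ge c_1(\|\na u\|_{L^2}^2 + \|\dot u\|_{L^2}^2) > 0$ for some $c_1=c_1(N,p,\al)>0$. Hence $V > 0$ persists on any future interval of existence.

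\emph{Blow-up.} The same chain yields the quantitative negativity $K_0(u) \le -c_2(\|\na u\|_{L^2}^2 + \|\dot u\|_{L^2}^2)$ on the trajectory, so the Nehari functional stays strictly negative by a definite margin. I then apply a Levine-type concavity argument with the damping-compensated second moment $F(t) := \|u(t)\|_{L^2}^2\,e^{2\al t}$, for which $F'(t) = 2\cP(\vec u(t))\,e^{2\al t}$ absorbs the first-order damping, and a direct computation from \eqref{NLKG} gives
\[
F''(t) = e^{2\al t}\bigl[(p+3)\|\dot u\|^2 + (p-1)\|u\|_{H^1}^2 + 4\al\cP - 2(p+1)E\bigr].
\]
Combining the sharp identity $\cP^2 \le \|u\|_{L^2}^2\|\dot u\|_{L^2}^2 + 2\al\|u\|_{L^2}^2\cP - \al^2\|u\|_{L^2}^4$ (from Cauchy--Schwarz on $\LR{u,\dot u}$ applied to $\cP = \LR{u,\dot u}+\al\|u\|^2$), the bound $E < \cP/A$ from the propagation step, and the strict negativity of $K_0$, one derives the concavity inequality $FF'' \ge (1+\delta)(F')^2$ for some $\delta=\delta(N,p,\al)>0$. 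Then $(F^{-\delta})''\le 0$, and since the propagation step forces $F'$ to become strictly positive, $F^{-\delta}$ must vanish in finite time, i.e.\ $\|u(t)\|_{L^2}\to\infty$, which is incompatible with global existence in $\cH$.

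\emph{Main obstacle.} The technical heart is the concavity inequality $FF'' \ge (1+\delta)(F')^2$ in the blow-up step: expanding $(F')^2 = 4\cP^2\,e^{4\al t}$ leaves a residual $\al^2\|u\|_{L^2}^4$-type term from the damping that cannot be killed by Cauchy--Schwarz alone, and must be reabsorbed using the quantitative lower bound $-K_0 \gec \|\na u\|^2 + \|\dot u\|^2$ established in the propagation step. The threshold $A = \tfrac{p+1}{p-1}(1+2\al)$ is tuned precisely so that the cancellations in both steps close simultaneously. The borderline case $\cP(\vec u(t_0))\le 0$, possible only when $E(\vec u(t_0))<0$, is handled either by waiting for $\cP$ to turn positive under the propagation of $V$, or by directly invoking the classical negative-energy blow-up for the focusing NLKG adapted to the damping (cf.\ \cite{CMYZ,BRS}).
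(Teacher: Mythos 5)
Your propagation step is sound and is essentially the paper's: your inequality $V'\ge c_1(\|\na u\|_{L^2}^2+\|\dot u\|_{L^2}^2)$ is the same cancellation that produces \eqref{dieq cP}, and your treatment of the case $\cP\le 0$ (then $E<0$, so $\p_t\cP\ge -(p+1)E(0)>0$) matches the paper. The gap is in the blow-up step.

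The concavity inequality $FF''\ge(1+\delta)(F')^2$ for $F=\|u\|_{L^2}^2e^{2\al t}$ does not follow from your ingredients, and in fact fails pointwise on the constraint set. Writing out the condition with your sharp Cauchy--Schwarz identity, $FF''\ge(1+\delta)(F')^2$ reduces to
\EQN{
 -K_0(u)\ \ge\ (1+2\delta)\|\dot u\|_{L^2}^2+(2+4\delta)\al\LR{u,\dot u}+2\delta\al^2\|u\|_{L^2}^2 .}
The residual you must absorb is therefore not an $\al^2\|u\|_{L^2}^4$ error alone but the full term $(2+4\delta)\al\cP-(2+2\delta)\al^2\|u\|_{L^2}^2$, which contains $\al\LR{u,\dot u}$ and a positive multiple of $\al^2\|u\|_{L^2}^2$; these are \emph{not} controlled by $\|\na u\|_{L^2}^2+\|\dot u\|_{L^2}^2$, so the quantitative negativity of $K_0$ you invoke cannot reabsorb them. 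Concretely, take $p=2$, $\al=1$ (so the hypothesis is $\cP>9E$) and a state with $\dot u=u$, so that $\cP=2\|u\|_{L^2}^2$ and the condition above (even with $\delta=0$) becomes $-K_0\ge 3\|u\|_{L^2}^2$. But $\cP>9E$ with $\dot u=u$ only yields $\|u\|_{L^3}^3>\tf56\|u\|_{L^2}^2+\tf32\|u\|_{H^1}^2$, hence $-K_0>\tf43\|u\|_{L^2}^2$, and choosing $u_0$ with $\|u_0\|_{L^3}^3$ barely above that threshold gives admissible initial data $(u_0,u_0)$ at which $FF''<(F')^2$. Tracking the constants shows the exponential-weight functional can only close when $p-1\gtrsim\al$, whereas the lemma is claimed for all $p>1$ and all $\al>0$. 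The paper avoids this by using the Levine functional $M(t)=\tf12\|u\|_{L^2}^2+\al\int_0^t\|u\|_{L^2}^2ds$: there $\dot M=\LR{u,\dot u}+\al\|u(0)\|_{L^2}^2+\int_0^t2\al\LR{u,\dot u}ds$ and $\ddot M$ carries the accumulated term $(p+1)\int_0^t2\al\|\dot u\|_{L^2}^2ds$, so the damping cross term is handled by Cauchy--Schwarz in the product space $L^2_x\times L^2_{t,x}$ (after waiting until $\cP$ dominates $\al\|u(0)\|_{L^2}^2$ and $E(\vec u(0))$), with no leftover $\|u\|_{L^2}^2$ deficit. You would need to replace your functional by this one (or otherwise restore the time-integral structure) for the argument to close.
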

Note that this is also a necessary condition for lower $p$ and $N$, and then the above result is equivalent to uniform bound in $\cH$ of global solutions (cf.~\cite{C,CMY}). For general (subcritical) power, however, it seems to be an open question both with the damping and without it. 
For our purpose, however, the above lemma (namely the sufficiency) is enough. 
\begin{proof}
If $\cP(\vec u(0))\le 0$, then \eqref{bc-P} implies that $E(\vec u(0))<0$, then \eqref{mass id} implies 
\EQ{
 \p_t\cP(\vec u) \ge - (p+1)E(\vec u(0)) > 0,}
so $\cP(\vec u)$ becomes positive in finite time or blows up before that. 
On the other hand, by \eqref{mass id} and \eqref{bd on P}, we have 
\EQ{ \label{dieq cP}
 c:=\tf{p-1}{1+2\al} \implies \p_t\cP(\vec u) \ge c\BR{\cP(\vec u) - \tf{p+1}{p-1}(1+2\al)E(\vec u)}.}
Hence, after we have \eqref{bc-P}, $\cP(\vec u)$ is strictly increasing in $t$, while $E(\vec u)$ is non-increasing, so \eqref{bc-P} is preserved, as long as the solution exists. If the solution exists globally on $t>0$, then $\cP(\vec u)\nearrow\I$ as $t\to\I$. Since $(\p_t+2\al)\|u(t)\|_{L^2}^2=2\cP(\vec u)\to\I$, we have $\|u(t)\|_{L^2}\to \I$, too. 
Let 
\EQ{
 M(t) := \tf12\|u(t)\|_{L^2}^2 + \int_0^t \al \|u(s)\|_{L^2}^2 ds.}
Then, using \eqref{mass id} and \eqref{Edecay}, we obtain 
\EQ{
 \pt \dot M = \cP(\vec u) = \LR{u,\dot u} + \al\|u(0)\|_{L^2}^2 + \int_0^t 2\al\LR{u,\dot u}(s)ds,
 \pr \ddot M = \tf{p+3}{2}\|\dot u\|_2^2 + \tf{p-1}{2}\|u\|_{H^1}^2 -(p+1)E(\vec u(0)) + (p+1)\int_0^t 2\al\|\dot u(s)\|_{L^2}^2 ds.}
Hence for any $\e\in(0,1)$, there is $T>0$ such that for all $t\ge T$ we have 
\EQ{
 \p_t \cP(\vec u)>0, \pq  \al\|u(0)\|_{L^2}^2 < \tf{\e}{1+\e} \cP(\vec u), 
 \pq \tf{p-1}{2}\|u\|_{L^2}^2 > (p+1)E(\vec u(0)).}
Then for $t>T$, using Schwarz in $L^2(\R^N)$, in $L^2((0,t)\times\R^N)$ and in $\R^2$, we obtain 
\EQ{
  \pt(1+\e)^{-1}\dot M = (1-\tf{\e}{1+\e})\dot M < \LR{u,\dot u} + \int_0^t 2\al\LR{u,\dot u}dt
  \pr\le \|u(t)\|_{L^2_x} \|\dot u(t)\|_{L^2_x} + 2\al\|u\|_{L^2((0,t)\times\R^N)}\|\dot u\|_{L^2((0,t)\times\R^N)}
  \pr\le \BR{\tf12\|u(t)\|_{L^2_x}^2+\al\|u\|_{L^2((0,t)\times\R^N)}^2}^{1/2}\BR{2\|\dot u(t)\|_{L^2_x}^2 + 4\al\|\dot u\|_{L^2((0,t)\times\R^N)}^2}^{1/2}
  \pr\le \sqrt{M(t)\tf{4}{p+3}\ddot M(t)}. }
Since $p>1$, we may choose $\e>0$ small so that 
\EQ{
 1<\tf{p+3}{4(1+\e)^2}=:1+\de}
for some $\de>0$. Then the above inequality implies for $t>T$
\EQ{
 \p_t^2 M^{-\de} = -\de M^{-1-\de}\BR{M\ddot M-(1+\de)\dot M^2} < 0,}
which is contradicting $M\to\I$. Hence $u$ can not be global. 
\end{proof}

For blow-up below the ground states, we have the following growth estimate on $\cP$. 
\begin{lemma} \label{lem:bup destiny}
For any $N\in\N$, $\al\in(0,\I)$ and $p\in(1,p^\star(N))$, 
there exist $c\in(0,1)$ and $C\in(1,\I)$ such that for any $\de\in(0,1)$ and any solution $\vec u$ of \eqref{NLKG} satisfying $E(\vec u(0))<E(\vec Q)-\de$, $K_0(u(0))<0$ and $\|\vec u(0)\|_\cH\le B$, 
$\cP(\vec u(t))$ is increasing and satisfying 
\EQ{
 \cP(\vec u)\ge e^{ct-CB^2/\de}-1-B^2,} 
as long as the solution $u$ exists. 
\end{lemma}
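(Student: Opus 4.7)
The plan is a two-stage Gronwall argument: first establish linear growth of $\cP(\vec u)$ at rate $\gec\de$ that lifts it above a threshold of order $B^2$ in time $\lec B^2/\de$, and then promote this to exponential growth using \eqref{dieq cP}.

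For stage one I show $K_0(u(t))\le -c\de$ throughout the existence interval. By \eqref{Edecay}, $E(\vec u(t))\le E(\vec u(0))<E(\vec Q)-\de$ for all $t$, so the dichotomy \eqref{est K0} applies at every time; its two branches are separated by a uniform gap, so continuity of $t\mapsto K_0(u(t))$ together with the initial negativity forces the solution to remain in the negative branch. Substituted into \eqref{mass id} this gives $\p_t\cP(\vec u)=\|\dot u\|_{L^2}^2-K_0(u)\gec \de>0$, which yields the monotonicity of $\cP(\vec u)$ claimed in the lemma and, upon integration, $\cP(\vec u(t))\ge\cP(\vec u(0))+c\de t$. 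Cauchy--Schwarz applied to $\cP(\vec u(0))=\LR{u(0),\dot u(0)}+\al\|u(0)\|_{L^2}^2$ together with $\|\vec u(0)\|_\cH\le B$ gives $\cP(\vec u(0))\ge-\tf12 B^2$.

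For stage two I combine \eqref{dieq cP} with the non-increase of $E$ and the crude bound $E(\vec u(0))\le\tf12\|\vec u(0)\|_\cH^2\le\tf12 B^2$ to obtain
\EQ{
 \p_t\cP(\vec u)\ge c_0\bigl(\cP(\vec u)-C_1B^2\bigr),
}
where $c_0=\tf{p-1}{1+2\al}$ and $C_1>0$ depends only on $p,\al$. Let $t_*$ be the first time at which $\cP(\vec u(t_*))\ge 2C_1B^2$; by stage one, $t_*\le C_2B^2/\de$. Gronwall applied to the quantity $\cP(\vec u)-C_1B^2$ from $t_*$ onward yields $\cP(\vec u(t))\ge C_1B^2\,e^{c_0(t-t_*)}$ for $t\ge t_*$, and absorbing $t_*\lec B^2/\de$ into the exponent turns this into $\cP(\vec u(t))\ge e^{ct-CB^2/\de}$ in that range. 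On $[0,t_*]$ the monotone lower bound $\cP(\vec u(t))\ge-\tf12 B^2\ge-1-B^2$ takes over, and choosing $C$ large enough ensures $e^{ct-CB^2/\de}\le 1$ for $t\le t_*$. Note that $K_0(u(0))<0$ together with the subcritical Sobolev embedding forces $B$ to be bounded below by a positive absolute constant, so the intermediate multiplicative factors in front of $B^2$ in the exponential bound can indeed be absorbed into the single constant $C$ in the statement.

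The only real subtlety is the continuity argument in stage one, showing that $K_0$ cannot leap across the forbidden gap in \eqref{est K0}. This is a quantitative form of the classical sub-ground-state trapping of Payne--Sattinger type and works here precisely because \eqref{est K0} provides a uniform gap of size $\gec\de$ between the positive and negative branches; the rest of the proof is routine integration of explicit differential inequalities, parallel in spirit to the preceding blow-up lemma.
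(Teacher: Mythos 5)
Your proposal is correct and follows essentially the same route as the paper: linear growth $\p_t\cP\gtrsim\de$ from the persistence of the negative branch of $K_0$ via \eqref{est K0}, followed by the exponential differential inequality \eqref{dieq cP} once $\cP$ exceeds a multiple of $B^2$ at a time $\lesssim B^2/\de$. You merely spell out the sign-trapping of $K_0$ and the constant bookkeeping more explicitly than the paper does.
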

The lower bound on $\cP$ may go much beyond the blow-up criterion \eqref{bc-P}, which will be useful for a gluing argument in Section \ref{sect:collapse}. 
\begin{proof}
\eqref{mass id} and \eqref{est K0} imply as long as the solution exists
\EQ{ 
 \cP(\vec u(t)) \ge \cP(\vec u(0)) - \int_0^t K_0(u(s))ds
 \ge -\|\vec u(0)\|_\cH^2 + C t \de \ge -B^2 + Ct\de,}
for some constant $C>0$ implicit in the estimate \eqref{est K0}, 
while $E(\vec u)\le\|\vec u\|_\cH^2/2\le B^2/2$. 
Hence for $t\gg B^2/\de$, we obtain $\p_t\cP \ge c\cP/2$ from \eqref{dieq cP} and so the desired estimate. 
\end{proof}

\section{Expansion around multi-solitons}
In order to investigate the dynamics around the $K$-solitons, decomposition is crucial for the solutions around them using the linearized operators. 
Let $\nu^{\pm}:=-\alpha \pm \sqrt{{\alpha}^2+{\nu}_0^2}$ as before, and for $j=1\etc N$,
\EQ{ 
  \pt \nu^j:=0,\pq \nu^{-j}:=-2\al, \pq Y^\pm:=\bar Y^\mp:=\smat{1 \\ \nu^\pm}\phi\in\cH,
  \pr Y^j:=\bar Y^{-j}:=\smat{1 \\ 0} \p_j Q\in \cH, \pq \bar Y^j:=Y^{-j}:=\smat{1 \\ -2\al}\p_j Q\in \cH.}
They are the eigenfunctions of $J\sL^\al$ and $J^*(\sL^\al)^*$ with real eigenvalues, namely
\EQ{
 (J\sL^\al-\nu^\x) Y^\x = 0 =
 (J^*(\sL^\al)^*-\nu^\x) \bar Y^\x, \pq (\x=\pm, \pm 1\etc \pm N).}
In order to distinguish the unstable and center modes, we introduce a few sets of index
\EQ{
 \pt \xu :=\{+\}, \pq \xc:=\{1\etc N\}, \pq \xcu:=\{+,1\etc N\},
 \pr \xscu:=\{\pm,\pm 1,\etc \pm N\}.}
The stable eigenfunctions $Y^-$ and $Y^{-j}$ are used just for some computations. 
Since 
\EQ{
 \p_t\om(u,v) = \om((\p_t-J\sL^\al)u,v) + \om(u,(\p_t+J^*(\sL^\al)^*)v)}
for general $u,v\in C^1_t(I;\cH)$, we have in particular for any $\x\in\xscu$
\EQ{ \label{eq eigen}
 (\p_t-\nu^\x)\om(u,\bar Y^\x) = \om((\p_t-J\sL^\al)u, \bar Y^\x),}
and for $\x,\y\in\xscu$,  
\EQ{ \label{om eig}
 \om(Y^\x,\bar Y^\y) = \CAS{\nu^\pm-\nu^\mp=\pm 2\sqrt{\al^2+\nu_0^2} &(\x=\y=\pm)\\ 
 \pm\frac{2\al}{N}\|\na Q\|_{L^2}^2 &(\x=\y=\pm j)\\ 0 &(\x\not=\y).}}

Let $u$ be a solution of \eqref{NLKG} in $\cH$ on some time interval $I$, and decompose it as 
\EQ{ \label{def v}
\pt Q_k:=\s_k Q(x-z_k), \pq Q_\Si:= \sum_{k=1}^K Q_k, \pq \vec Q_k:=\smat{Q_k \\ 0}, \pq \vec
Q_\Si:=\smat{Q_\Si \\ 0},
 \pr v:=\vec u-\vec Q_\Si,}
for any $K\in\N$, $z=(z_1\etc z_K) \in C^1(I;(\R^N)^K)$ and $\s=(\s_1\etc \s_K)\in\{\pm 1\}^K$. 
Note that $\vec Q_k\not=\smat{Q_k \\ \p_t Q_k}$ when $z_k(t)$ depends on $t$, and similarly for $\vec
Q_\Si$. 
Plugging the above decomposition into the equation \eqref{NLKG} yields for each $k=1\etc K$
\EQ{ \label{eq v}
 \dot v \pt=J(\sL_k^\al v - \vec N_k(v)) - \dot z\cdot \na_z \vec Q_\Si
  \pn=J(\sL_\Si^\al v - \vec N_\Si(v)) - \dot z\cdot \na_z \vec Q_\Si,}
where $\sL_\bu^\al$ and $\vec N_\bu$ with $\bu=1\etc K,\Si$ are defined by 
\EQ{
 \pt \cL_\bu := -\De + 1 - f'(Q_\bu), 
  \pq \sL_\bu^\al :=\bmat{\cL_\bu & 2\al \\ 0 & 1}, 
 \pr f_\Si:=\sum_{l=1}^K f(Q_l),\pq N_\bu(\fy):=f(Q_\Si+\fy)-f_\Si - f'(Q_\bu)\fy, \pq \vec N_\bu(v):=\smat{N_\bu(v_1) \\ 0}.}
The last term of \eqref{eq v} comes from the motion of solitons at $z$: 
\EQ{
 \dot z\cdot \na_z \vec Q_\Si = -\sum_{k=1}^K \sum_{j=1}^N \s_k \dot z_{k,j} \p_{x_j}\vec Q(x-z_k).}

\begin{remark}
Throughout this paper, there are many symbols, such as $Q_\Si$ above, with implicit dependence on the soliton centers and on the soliton signs:
\EQ{
 z\in (\R^N)^K, \pq \s\in\{\pm 1\}^K.}
This is just to avoid too heavy notation. 
In particular, the dependence on $\s$ is rarely explicit in the symbol, because it is always fixed in each context. 
On the other hand, the dependence on $z$ is sometimes written explicitly, as we need to change it. 
\end{remark}

\subsection{Spectral decomposition of the solution}
Next we extract eigenmodes with real eigenvalues around each soliton. 
For any $\x\in\xscu$ and $c\in\R^N$, define the symplectic projection $P^\x(c):\cH\to\cH$ to the eigenmode around $x=c$ by
\EQ{ \label{def P single}
 \pt p^\x(c)\fy:= \om(\fy,\bar Y^\x(x-c))/C^\x_\om \in\R, \pq C^\x_\om:=\om(Y^\x,\bar Y^\x)\in\R\setminus\{0\}, 
 \pr P^\x(c)\fy:=[p^\x(c)\fy]Y^\x(x-c) \in\cH.}
See \eqref{om eig} for the values of $C^\x_\om$. 
The most important is the unstable mode $p^+$, so we introduce a short-cut notation applicable directly to the solutions at any time
\EQ{
 \ap^+_k(z)\fy := p^+(z_k)(\vec \fy-\vec Q_\Si)\in\R, \pq \ap^+(z)\fy := (\ap^+_1(z)\fy\etc \ap^+_L(z)\fy)\in\R^K.}
For any $\dia\subset\xscu$, the corresponding eigenspace is denoted by 
\EQ{
 \Y^\dia(c):=\Span\{Y^\x(x-c) \mid \x\in\dia\}, \pq \Y^\dia(z):=\sum_{k=1}^K \Y^\dia(z_k)}
for any $c\in\R^N$ and $z=(z_1\etc z_K)\in(\R^N)^K$, and the (symplectic) orthogonal complement is denoted by
\EQ{
 \Y_\perp^\dia(z):=\{\fy\in\cH \mid \forall k=1\etc K,\ \forall \x\in\dia,\ p^\x(z_k)\fy=0\}.}
When the distance among the solitons 
\EQ{
 D_z := \min_{1\le k\not=l \le K}|z_k-z_l|} 
is large enough, the unique projection $P_\perp^\dia(z):\cH\to\Y_\perp^\dia(z)$ is defined in the form
\EQ{
 \pt P_\perp^\dia(z) = I - P^\dia(z), \pq P^\dia(z)=\sum_{k=1}^K P^\dia_k(z), 
 \pr P^\dia_k(z)\fy=\sum_{\x\in\dia} [\ti p^\x_k(z)\fy]Y^\x_k, 
 \pq Y^\x_k:=Y^\x(x-z_k),}
for appropriate choice of bounded linear functionals $\ti p^\x_k(z):\cH\to\R$. To determine them, the orthogonality conditions in $\Y_\perp^\dia(z)$ may be written as 
\EQ{
 0 \pt= \om(\fy,\bar Y^\y_l)/C^\y_\om 
  \pn= p^\y(z_l)\fy - \sum_{k=1}^K \sum_{\x\in\dia} \frac{\om(Y^\x_k,\bar Y^\y_l)}{\om(Y^\y,\bar Y^\y)} \ti p^\x_k(z)\fy,}
for $l=1\etc K$ and $\y\in\dia$. 
The exponential decay of the eigenfunctions implies that the second term may be written as multiplication by a square matrix of size $K\#\dia$ with $1$ on the diagonal and $O(\eN(D_z))$ off the diagonal, which is invertible for $D_z\gg 1$. 
In other words, for any $z\in(\R^N)^K$ with $D_z\gg 1$, 
there is a unique linear isomorphism $A^\dia(z)$ of $\R^{K\#\dia}$ such that 
\EQ{
  \pt A^\dia(z)=I+O(\eN(D_z))=(A_{k,l}^{\x,\y}(z))_{1\le k,l\le K,\ \x,\y\in\dia}, 
  \pr \ti p^\x_k(z) = \sum_{l=1}^K \sum_{\y\in\dia} A_{k,l}^{\x,\y}(z) p^\y(z_l),}
and $A^\dia(z)$ is $C^\I$ in $z\in(\R^N)^K$. 
In conclusion, we have 
\begin{lemma} \label{lem:spec coord}
For any $N\in\N$, $\al\in(0,\I)$, $p\in(1,p^\star(N))$ and $K\in\N$, 
there exist $D_\star,C\in(1,\I)$ such that for any $z\in(\R^N)^K$ satisfying $D_z>D_\star$ and any $\dia\subset\xscu$, there is a unique isomorphism between the Hilbert spaces:
\EQ{ \label{coord aga}
 \cH\ni \pt\fy \mapsto (P^\dia(z_1)\fy\etc P^\dia(z_K)\fy, P_\perp^\dia(z)\fy)\in \bigoplus_{k=1}^K\Y^\dia(z_k)\oplus\Y_\perp^\dia(z),}
which may be rewritten in components as
\EQ{  
  \pt P^\dia(z_k)\fy = \sum_{\x\in\dia} [p^\x(z_k)\fy] Y^\x_k, \pq I=\sum_{k=1}^K P^\dia_k(z)+P_\perp^\dia(z),
  \pr P^\dia_k(z)\fy = \sum_{\x\in\dia} [\ti p^\x_k(z)\fy]Y^\x_k,
   \pq \ti p^\x_k(z) = \sum_{l=1}^K \sum_{\y\in\dia} A_{k,l}^{\x,\y}(z) p^\y(z_l),}
where $p^\x(z_k)$ are defined in \eqref{def P single}, while $\ti p^\x_k(z)$ and $A^{\x,\y}_{k,l}(z)$ are
uniquely determined by the above identities. 
$p^\x(z_k)$, $\ti p^\x(z)$ and $A_{k,l}^{\x,\y}(z)$ are all $C^\I$ with respect to $z\in(\R^N)^K$ for $D_z>D_\star$, satisfying 
\EQ{
 \pt \sum_{k,l\in\{1\etc K\}} \sum_{\x,\y\in\dia} [|p^\x(z_k)-\ti p^\x_k(z)| + |A^{\x,\y}_{k,l}(\z) - \de_{k,l}\de_{\x,\y}|] \le C\eN(D_z),
 \pr C^{-1}\|P^\dia(z)\fy\|_\cH \le \sum_{k=1}^K \sum_{\x\in\dia}|p^\x(z_k)\fy| \le C\|P^\dia(z)\fy\|_\cH,}
as well as for $z,z'\in (\R^N)^K$ with $D_z,D_{z'}>D_\star$, 
\EQ{
 \|P^\dia(z)-P^\dia(z')\|_{\op(\cH)} + \|P^\dia_\perp(z)-P^\dia_\perp(z')\|_{\op(\cH)} \le C|z-z'|.}
\end{lemma}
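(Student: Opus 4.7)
\emph{Plan.} The whole statement reduces to inverting a matrix that is a small perturbation of the identity on $\R^{K\#\dia}$. Any $\fy\in\cH$ admits at most one decomposition of the form
\[
 \fy=\sum_{k=1}^K\sum_{\x\in\dia} c^\x_k Y^\x_k + \chi,\quad \chi\in\Y^\dia_\perp(z),
\]
because applying $p^\y(z_l)$ to both sides and using $p^\y(z_l)\chi=0$ yields the linear system
\[
 p^\y(z_l)\fy = \sum_{k=1}^K\sum_{\x\in\dia} M^{\y,\x}_{l,k}(z)\, c^\x_k,\qquad M^{\y,\x}_{l,k}(z):=\frac{\om(Y^\x_k,\bar Y^\y_l)}{C^\y_\om}=p^\y(z_l)Y^\x_k.
\]
So the lemma is equivalent to showing that the $K\#\dia\times K\#\dia$ matrix $M^\dia(z)$ is invertible with $A^\dia(z):=M^\dia(z)^{-1}$ obeying the stated bounds; the coordinates are then $\ti p^\x_k(z):=\sum_{l,\y}A^{\x,\y}_{k,l}(z)\,p^\y(z_l)$, $P^\dia_k(z)\fy:=\sum_\x \ti p^\x_k(z)\fy\, Y^\x_k$, and $P^\dia_\perp(z):=I-\sum_k P^\dia_k(z)$.

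\emph{Diagonal versus off-diagonal entries of $M^\dia(z)$.} On the diagonal $k=l$, translating $x\mapsto x+z_k$ and invoking \eqref{om eig} gives $M^{\y,\x}_{k,k}(z)=\de_{\x,\y}$. Off the diagonal $k\ne l$, each entry is an $L^2$-pairing between translates of exponentially decaying radial functions (built from $\phi$ and $\na Q$, the components of $Y^\x$ and $\bar Y^\y$), separated by $|z_k-z_l|\ge D_z$. Combining \eqref{asy Q} with the analogous asymptotics of $\phi$ and the standard convolution estimate for exponential profiles yields $|M^{\y,\x}_{l,k}(z)|\lec \eN(D_z)$. Hence $M^\dia(z)=I+O(\eN(D_z))$ in operator norm on $\R^{K\#\dia}$; choosing $D_\star$ so large that the implicit constant times $\eN(D_\star)$ is below $1/2$, the Neumann series produces the unique inverse $A^\dia(z)=I+O(\eN(D_z))$ for all $z$ with $D_z>D_\star$.

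\emph{Estimates and regularity.} The bound $|A^{\x,\y}_{k,l}(z)-\de_{k,l}\de_{\x,\y}|\lec \eN(D_z)$ is immediate from the Neumann expansion, and $|p^\x(z_k)\fy-\ti p^\x_k(z)\fy|\lec \eN(D_z)\|\fy\|_\cH$ follows since $\ti p=A\cdot p$ and each $p^\y(z_l)$ is a bounded linear functional on $\cH$ with norm $O(1)$. The two-sided equivalence $\|P^\dia(z)\fy\|_\cH\sim \sum_{k,\x}|p^\x(z_k)\fy|$ uses that $\{Y^\x_k\}_{k,\x}$ form an almost-biorthogonal system whose Gram matrix differs from a fixed non-degenerate one by $O(\eN(D_z))$. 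For smoothness, $z_k\mapsto Y^\x(\cdot-z_k)$ and $z_k\mapsto \bar Y^\x(\cdot-z_k)$ are $C^\I$ maps into $\cH$ because $\phi$ and $\na Q$ lie in $H^\I(\R^N)$ by elliptic regularity; hence $M^\dia(z)$ and therefore $A^\dia(z)$ are $C^\I$ on $\{D_z>D_\star\}$, and the operator-norm Lipschitz bounds on $P^\dia(z)$ and $P^\dia_\perp(z)$ come from the mean value theorem applied to this $C^1$-bounded dependence.

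\emph{Main obstacle.} There is no deep obstacle; the content is linear algebra plus exponential decay. The only care needed is to keep constants uniform in $z$, which works because $K$ is a fixed constant and the single scalar $D_z$ simultaneously controls every off-diagonal overlap integral between distinct solitons. The decomposition must fail precisely when some pair of solitons comes too close, so one cannot hope for better than the threshold $D_z>D_\star$.
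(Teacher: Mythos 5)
Your proposal is correct and follows essentially the same route as the paper: the text preceding Lemma \ref{lem:spec coord} sets up exactly the same linear system from the orthogonality conditions, observes that the coefficient matrix is $I+O(\eN(D_z))$ by the exponential decay of the eigenfunctions, and inverts it for $D_z\gg1$. Your write-up merely fills in the routine details (Neumann series, Gram-matrix equivalence, smoothness of translations) that the paper leaves implicit.
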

Note that $P^\dia(z_k)$ and $P^\dia_k(z)$ are close to each other but not the same unless $K=1$. 
$P^\dia(z_k)$ is defined with respect to the single soliton at $x=z_k$, so it is just the spatial translate in $\R^N$ by $z_k$ of $P^\dia(0)$, while $P^\dia_k(z)$ is affected by the other solitons at $x=z_l$ of $l\not=k$. 
In particular, $\ti p^\x_k$ and $A^{\x,\y}_{k,l}$ depend also on the choice of $\dia\subset\xscu$.

\subsection{Expansion of the Hamiltonian}
The decomposition $\vec u=\vec Q_\Si+v$ at any fixed time yields the Taylor expansion of the Hamiltonian 
\EQ{ \label{E exp 0}
 E(\vec u) \pt= E(\vec Q_\Si) + \LR{E'(\vec Q_\Si)|v} + \tf12\LR{E''(\vec Q_\Si)v|v} + O(\|v_1\|_{H^1}^3)
  \pr=E(\vec Q_\Si) + \LR{f_\Si-f(Q_\Si)|v} + \tf12\LR{\sL_\Si v|v} + O(\|v_1\|_{H^1}^3),}
where the Sobolev embedding $H^1(\R^N)\subset L^{p+1}(\R^N)$ was used to estimate the remainder, together with the uniform boundedness of $Q_\Si$ in $H^1(\R^N)$ (for fixed $K$). 
The leading term may be further expanded, using 
\EQ{ \label{exp F}
 F(Q_\Si)-\sum_{k=1}^K F(Q_k) \pt= \sum_{k\not=l} f(Q_k)Q_l + O(\sum_{k\not=l}|Q_kQ_l|^{p_2})
 \pr= \sum_{k\not=l} f(Q_k)Q_l + O(\eN(D_z)^{p_2}),}
where $p_2:=\min(2,(p+1)/2)>1$ for $p>1$. Thus we obtain 
\EQ{
  E(\vec Q_\Si) - KE(\vec Q) \pt= \sum_{1\le k<l\le K}\LR{Q_k|Q_l}_{H^1} - \int_{\R^N}[F(Q_\Si)-\sum_{k=1}^K F(Q_k)]  dx
  \pr=\sum_{1\le k<l\le K}\LR{f(Q_k)|Q_l} - \sum_{1\le k\not=l\le K} \LR{f(Q_k)|Q_l} + O(\eN(D_z)^{p_2})
  \pr=V_\s(z) + O(\eN(D_z)^{p_2}),}
where the soliton potential energy $V_\s:(\R^N)^K\to\R$ is defined by
\EQ{ \label{def V}
 \pt V_\s(z):=-\sum_{1\le k<l\le K} \LR{f(Q_k)|Q_l} = -\sum_{1\le k<l\le K} \LR{Q_k|Q_l}_{H^1}
  \prQ=-\sum_{1\le k<l\le K}\s_k\s_l \eN_0(|z_k-z_l|),
 \prq \eN_0(a):=\LR{f(Q)|Q(x-ae_1)}=\LR{f(Q)|e^{x_1}}\eN(a)[1+O(a^{-1})] \sim \eN(a),}
with the asymptotic formula following from the radial symmetry of the ground state $Q$ and its decay behavior \eqref{asy Q}. 
Thus we obtain 
\EQ{ \label{E exp 1}
 E(\vec u) \pt= KE(\vec Q) + V_\s(z) + \tf12\LR{\sL_{\Si}v|v} + O(\eN(D_z)^{p_2}+\|v\|_\cH^3).}

\subsection{Decomposition of the equation}
Next we derive the evolution equations for the components in the spectral decomposition of $v=\vec u-\vec Q_\Si$, namely 
\EQ{
 \pt a^\x_k(t) := p^\x(z_k(t))v(t), \pq \ga(t):=P_\perp^\xcu(z(t))v(t),}
assuming that $D_z\gg 1$ on the time interval $I$. 
\eqref{eq eigen} and \eqref{eq v} imply 
\EQ{ \label{eq a}
 (\p_t-\nu^\x) a_k^\x \pt= p^\x(z_k)[(\p_t-J\sL_k^\al)v + \dot z_k\cdot\na v]
 \pr= p^\x(z_k)[-J\vec N_k(v) + \dot z_k\cdot\na v - \dot z\cdot \na_z \vec Q_\Si],}
where the terms with $\na v$ come from the movement of centers $z_k$ in the operator $p^\x(z_k)$. Thanks to the regularity of eigenfunctions, they are bounded by 
\EQ{
 |p^\x(z_k)[\dot z_k\cdot\na v]| \lec |\dot z_k|\|v\|_\cH,}
while the soliton term is bounded by using the exponential decay
\EQ{
 p^\x(z_k)\dot z\cdot \na_z \vec Q_\Si = \CAS{O(|\dot z|\eN(D_z)) &(\x=+),\\ (\dot z_k)_\x + O(|\dot z|\eN(D_z)) &(\x=1\etc N).} }

Applying $P_\perp^\xcu(z)$ to \eqref{eq v}, we obtain the equation for the radiation
\EQ{ \label{eq ga}
 \dot\ga \pt= P_\perp^\xcu(z)\dot v + \dot P_\perp^\xcu(z)v 
 = P_\perp^\xcu J[\sL_\Si^\al v - \vec N_\Si(v)] + \dot P_\perp^\xcu(z) v
 \pr= J\sL_\Si^\al \ga - P_\perp^\xcu(z) J \vec N_\Si(v) + R_\perp^\xcu(z)v,} 
where the remainder operator $R_\perp^\dia(z):\cH\to\cH$ is defined by 
\EQ{
 R_\perp^\dia(z) \pt:= -P^\dia(z)J\sL_\Si^\al P_\perp^\dia(z) + P_\perp^\dia(z)J\sL^\al_\Si P^\dia(z)-\dot P^\dia(z).}
The invariance of the eigenspace $\Y^\dia(z_k)$ by $J\sL_k^\al$ and $J^*(\sL_k^\al)^*$ allows us to expand
\EQ{
 R_\perp^\dia(z) = -\sum_{k=1}^K \BR{P^\dia_k(z) R_{f,k} P_\perp^\dia(z) + P_\perp^\dia(z) R_{f,k} P^\dia_k(z) - \dot P^\dia_k(z)},}
with 
\EQ{
 R_{f,k} := J\sL_\Si - J\sL_k = \bmat{0 & 0 \\ f'(Q_\Si)-f'(Q_k) & 0}.}
Then using the exponential decay of the solitons and the eigenfunctions, we obtain 
\EQ{
 \|R_\perp^\dia(z)\|_{\op(\cH)} \lec \eN(D_z)+|\dot z|.} 

\subsection{Soliton interactions in the equation}
Next we extract the nonlinear terms that depend only on the solitons, which are also the leading part. Let 
\EQ{ 
 \pt N^0 := f(Q_\Si)-f_\Si, \pq \vec N^0:=\smat{N^0 \\ 0}, \pq \vec N^1_\Si(\fy):=\smat{N^1_\Si(\fy_1) \\ 0},
 \pr N^1_\Si(\fy):=N_\Si(\fy)-N^0=f(Q_\Si+\fy)-f(Q_\Si)-f'(Q_\Si)\fy. }
The Taylor expansion yields
\EQ{ \label{N aprx 0}
  |N^1_\Si(\fy)| \lec \sum_{k=1}^K|Q_k|^{p-2}|\fy|^2+|\fy|^p.}
For the eigenmodes, we have 
\EQ{ \label{N aprx}
 \pt N_k(\fy)-N^0 = N^1_\Si(\fy)+[f'(Q_\Si)-f'(Q_k)]\fy, 
 \pr |p^\x(z_k)J[\vec N_k(\fy)-\vec N^0]| \lec \|\fy\|_\cH^2 + \eN(D_z)\|\fy\|_\cH.  }
The contributions of $N^0$ are computed as 
\EQ{
 \pt p^+(z_k)J\vec N^0 = -\LR{N^0|\phi(x-z_k)}/C^+_\om,
 \pq p^j(z_k)J\vec N^0 = -\LR{N^0|\p_jQ(x-z_k)}/C^1_\om,}
and using 
\EQ{ \label{exp f}
 f(Q_\Si)-f_\Si \pt= \sum_{k\not=l} p|Q_k|^{p-1}Q_l + O(\eN(D_z)^{p_1}), \pq p_1:=\min(2,p/2),}
where $p_1>1$ for $p>2$. This is the main reason that we need $p>2$. 
One may take $p_1=2$ for $p=3$. The above expansion is essentially the same as \eqref{exp F}, but for a different power. 
Thus we obtain
\EQ{ \LR{N^0|\p_jQ_k} \pt= \LR{f(Q_\Si)-f_\Si|\p_jQ(x-z_k)} 
 \pr= \sum_{l\not=k} \LR{f'(Q_k)Q_l|\p_jQ(x-z_k)}+O(\eN(D_z)^{p_1}),}
where we may compute the leading term using the radial symmetry as 
\EQ{ \label{f Q na}
 \LR{f'(Q_k)Q_l|\p_jQ(x-z_k)} \pt= -\s_l\LR{f(Q)|\p_j Q(x-z_l+z_k)} 
 \pr= -\s_l \tf{c_j}{|c|}\LR{f(Q)|\p_1 Q(x-|c|e_1)} = \s_l \tf{c_j}{|c|}\eN_0'(|c|),}
where $c:=z_l-z_k\in\R^N$. Thus we obtain 
\EQ{ \label{pjN0 asy}
 p^j(z_k)J\vec N^0  \pt= -\sum_{l\not=k} \s_l\tf{(z_l-z_k)_j}{|z_l-z_k|} \eN_0'(|z_k-z_l|)/C^1_\om + O(\eN(D_z)^{p_1})
  \pr= -\s_k \p_{z_{k,j}} V_\s(z)/C^1_\om + O(\eN(D_z)^{p_1}).}
In the same way, we obtain
\EQ{
 p^+(z_k)J\vec N^0 =-\sum_{l\not=k}\s_l \eN_+(|z_l-z_k|)/C^+_\om+O(\eN(D_z)^{p_1}),}
with 
\EQ{
 \eN_+(a) := \LR{f'(Q)\phi|Q(x-ae_1)} = \LR{f'(Q)\phi|e^{x_1}}\eN(a)[1+O(a^{-1})] \sim \eN(a).}

In particular, the equation of the unstable modes is estimated as
\EQ{ \label{eq a+ est}
 \pt(\p_t-\nu^+)a^+_k = -p^+(z_k)J\vec N_k(v) + O((\|v\|_\cH+\eN(D_z))|\dot z|)
 \pr=-\sum_{l\not=k}\s_l\eN_+(|z_l-z_k|)/C^+_\om+O(\eN(D_z)^{p_1}+\|v\|_\cH^2+(\|v\|_\cH+\eN(D_z))|\dot z|)
 \pr=O(\eN(D_z)+\|v\|_\cH^2+(\|v\|_\cH+\eN(D_z))|\dot z|). }

For the equation of radiation $\ga$, we have
\EQ{
 \pt \|P^\xcu(z)J\vec N_\Si^1(v)\|_\cH \lec \|v\|_\cH^2, \pq |N_\Si^1(v_1)| \lec |v_1-\ga_1|^2+|\ga_1|^2+|\ga_1|^p,}
and $\||v_1-\ga_1|^2\|_{L^2}\lec |a^+|^2$. Hence, there is some $N(v,z)\in\cH$ such that
\EQ{ \label{eq ga est}
 \pt(\p_t-J\sL_\Si^\al)\ga+P_\perp^\xcu(z)J\vec N^0=:g
 \pr\implies g_1=N_1(v,z), \pq |g_2| \le N_2(v,z)+2|\ga_1|^p,
 \pr \|N(v,z)\|_\cH \lec \|v\|_\cH^2+(\eN(D_z)+|\dot z|)\|v\|_\cH,
 \pq \|J\vec N^0\|_\cH \lec \eN(D_z).}
This decomposition of $g$ is needed only for $p>\tf{N}{N-2}$, where we apply the Strichartz estimate on the pure power part $|\ga_1|^p$. 
For $N\le 2$ or $p\le\tf{N}{N-2}$, we may put $g=N(v,z)$ to satisfy the above estimate, and we do not need the Strichartz estimate. 

\section{Modulation of the centers}
We have some freedom for the choice of centers $z=(z_1\etc z_K)$, but the simplest one to avoid possible growth in the center subspace $\Y^\xc(z)$ is to impose the (symplectic) orthogonality
\EQ{ \label{a0 stat}
 v:=\vec u- \vec Q_\Si \in \Y^\xc_\perp(z),}
or in terms of the projections, $p^j(z_k)v=0$ for all $j=1\etc N$ and $k=1\etc K$. 
From the equation \eqref{eq a} on the eigenmodes, we see that 
\EQ{ \label{modeq z}
 \p_t p^j(z_k)v=0 \iff \dot z_{k,j} = \s_k p^j(z_k)[J\vec N_k(v)-\dot z_k\cdot\na v-\sum_{l\not=k}\dot z_l\cdot\na\vec Q_l].}
Since the coefficients on $\dot z$ on the right side is bounded by $\|v\|_\cH+\eN(D_z)$, we can further solve it for $\dot z$, as long as $\|v\|_\cH\ll 1\ll D_z$. More precisely, there is a $KN$-dim invertible matrix $M(v,z)=I+O(\|v\|_\cH+\eN(D_z))$, for any $v\in\cH$ and $z\in(\R^N)^K$ for which $\|v\|_\cH+\eN(D_z)$ is small enough (depending only on $N,\al,p,K$), such that the system of equations 
\EQ{ \label{eq z}
 (\dot z_k)_{k=1\etc K} = M(v,z)(p^j(z_k)J\vec N_k(v))_{k=1\etc K,\ j=1\etc N}}
is equivalent to \eqref{modeq z}. Moreover, $M(v,z)$ depends smoothly on $v,z$. 
Using \eqref{N aprx} and \eqref{pjN0 asy}, we may extract the leading term in the equation \eqref{eq z} as
\EQ{ \label{eq z asy}
 \dot z_k \pt= \sum_{l\not=k}\s_k\s_l \tf{z_k-z_l}{|z_k-z_l|}\eN_0'(|z_k-z_l|)/C^1_\om+O(\eN(D_z)^{p_1}+\|v\|_\cH^2)
  \pr= -\na_{z_k} V_\s(z)/C^1_\om +O(\eN(D_z)^{p_1}+\|v\|_\cH^2),}
where $V_\s(z)$ is the soliton potential energy defined in \eqref{def V}. 
In other words, the motion of the centers $z$ is approximated by the gradient flow for $V_\s(z)/C^1_\om$. 
In particular we have 
\EQ{ \label{eq z est}
 |\dot z| \lec \eN(D_z)+\|v\|_\cH^2.}
Then the estimates for the equations \eqref{eq a+ est} and \eqref{eq ga est} are simplified to 
\EQ{ \label{eq a+ simp}
 |(\p_t-\nu^+)a^+_k| \lec \eN(D_z)+\|v\|_\cH^2,}
and 
\EQ{ \label{eq ga simp}
 \|N(v,z)\|_\cH \lec \|v\|_\cH^2+\eN(D_z)\|v\|_\cH.} 

On the other hand, by the implicit function theorem, we may configure the centers to satisfy $v\in\Y^\xc_\perp(z)$ at any fixed time. 
For any $\fy\in\cH$, $\s\in\{\pm 1\}^K$ and $z\in(\R^N)^K$, the $z$-dependence is explicitly denoted by 
\EQ{
 \vec Q_k[z_k]:=\s_k Q(x-z_k), \pq \vec Q_\Si[z]:=\sum_{k=1}^K Q_k[z_k], \pq \fy[z] := \fy - \vec Q_\Si[z].}
\begin{lemma} \label{lem:center}
For any $N\in\N$, $\al\in(0,\I)$, $p\in(2,p^\star(N))$ and $K\in\N$, 
there exists $\de_I\in(0,1)$ such that for any $\fy\in\cH$ and $z\in(\R^N)^K$ satisfying 
\EQ{
 D_z \ge 1/\de_I,  \pq \|\fy[z]\|_\cH \le \de_I,}
there exists a unique $\ti z\in(\R^N)^K$ satisfying $|z-\ti z|\le\de_I$ and 
\EQ{
  \fy[\ti z] \in \Y^\xc_\perp(\ti z).}
Moreover, $\ti z$ is $C^1$ with respect to $(\fy,z)$, and for $k=1\etc K$
\EQ{ \label{est center change}
 \pt \|\fy[\ti z]-\fy[z]\|_\cH\lec |z-\ti z| \lec \|P^\xc(z)\fy[z]\|_\cH, 
 \pr |p^+(\ti z_k)\fy[\ti z]-p^+(z_k)\fy[z]| \lec |z-\ti z|[\|\fy[z]\|_\cH+\eN(D_z)],
 \pr \|P^\xcu_\perp(\ti z)\fy[\ti z]-P^\xu_\perp(z)\fy[z]\|_\cH \lec |z-\ti z|+\eN(D_z)\|\fy[z]\|_\cH.}
\end{lemma}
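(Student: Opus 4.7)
The plan is to apply the implicit function theorem to $F:\cH\times(\R^N)^K\to\R^{KN}$, $F(\fy,\ti z):=(p^j(\ti z_k)\fy[\ti z])_{k,j}$, around a given $(\fy,z)$ with $D_z\ge 1/\de_I$ and $\|\fy[z]\|_\cH\le\de_I$ for $\de_I$ small to be chosen. Since $|F(\fy,z)|\sim\|P^\xc(z)\fy[z]\|_\cH$ by Lemma \ref{lem:spec coord}, any quantitative form of the IFT will automatically deliver existence, uniqueness, $C^1$ dependence on $(\fy,z)$, and the estimate $|z-\ti z|\lec\|P^\xc(z)\fy[z]\|_\cH$.

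The main step is to verify invertibility of $\p_{\ti z}F|_{\ti z=z}$. The chain rule produces two contributions to the entry $(k,j;l,i)$: differentiating $-\vec Q_\Si[\ti z]$ gives $\s_l\om(\p_i\vec Q(x-z_l),\bar Y^j(x-z_k))/C^1_\om$, and differentiating $\bar Y^j(x-\ti z_k)$ gives $-\de_{k,l}\om(\fy[z],\p_i\bar Y^j(x-z_k))/C^1_\om$. Using $\p_i\vec Q=Y^i$ together with the biorthogonality $\om(Y^i,\bar Y^j)=C^1_\om\de_{i,j}$ from \eqref{om eig}, the $l=k$ diagonal reduces to $\s_k\de_{i,j}+O(\|\fy[z]\|_\cH)$, while the $l\ne k$ blocks are $O(\eN(D_z))$ by exponential decay of the eigenfunctions across distance $\ge D_z$. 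Hence $\p_{\ti z}F|_{\ti z=z}=\operatorname{diag}(\s_k I_N)+O(\eN(D_z)+\|\fy[z]\|_\cH)$, invertible for $\de_I$ small enough.

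The subsidiary estimates follow once the IFT is in place. The bound $\|\fy[\ti z]-\fy[z]\|_\cH\lec|z-\ti z|$ is immediate from $\fy[\ti z]-\fy[z]=-\sum_l\s_l[\vec Q(x-\ti z_l)-\vec Q(x-z_l)]$ and smoothness of translation. For $p^+(\ti z_k)\fy[\ti z]-p^+(z_k)\fy[z]$, I would split it as $[p^+(\ti z_k)-p^+(z_k)]\fy[z]+p^+(\ti z_k)(\fy[\ti z]-\fy[z])$; the first summand is $O(|z-\ti z|\|\fy[z]\|_\cH)$ directly, while for the second I Taylor-expand $\fy[\ti z]-\fy[z]=\sum_l\s_l\sum_j(\ti z_l-z_l)_j Y^j(x-z_l)+O(|z-\ti z|^2)$. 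The $l=k$ leading contribution is annihilated by the orthogonality $\om(Y^j,\bar Y^+)=0$ from \eqref{om eig}, and the remainder is $O(|z-\ti z|^2+|z-\ti z|\eN(D_z))$, absorbed into the claim via $|z-\ti z|\lec\|\fy[z]\|_\cH$. For the radiation bound I observe that $P^\xc(\ti z)\fy[\ti z]=0$ by construction, so $P^\xcu_\perp(\ti z)\fy[\ti z]=P^\xu_\perp(\ti z)\fy[\ti z]$, and then the split $[P^\xu_\perp(\ti z)-P^\xu_\perp(z)]\fy[z]+P^\xu_\perp(\ti z)(\fy[\ti z]-\fy[z])$ is controlled by the Lipschitz estimate on projections from Lemma \ref{lem:spec coord} together with $\|\fy[\ti z]-\fy[z]\|_\cH\lec|z-\ti z|$. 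The hardest piece is the biorthogonality cancellation $\om(Y^j,\bar Y^+)=0$ in the $p^+$ estimate: without it, an $O(|z-\ti z|)$ drift of the unstable projection would remain, too coarse for later tracking of $\ap^+$ in the nonlinear dynamics.
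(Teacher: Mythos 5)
Your overall strategy is exactly the paper's: the implicit function theorem applied to $\ti z\mapsto(p^j(\ti z_k)\fy[\ti z])_{k,j}$, the Jacobian computation giving $\operatorname{diag}(\s_k I_N)+O(\eN(D_z)+\|\fy[z]\|_\cH)$ via integration by parts and the biorthogonality $\om(Y^i,\bar Y^j)=\de_{i,j}C^1_\om$, and the Taylor expansion of $\vec Q_\Si[\ti z]-\vec Q_\Si[z]$ combined with the cancellation $\om(Y^j,\bar Y^+)=0$ (equivalently $\LR{\phi|\na Q}=0$) to get the quadratic bound on the shift of the unstable mode. Those parts are correct and match the paper's proof essentially line by line.

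There is, however, one step that fails: the claim that $\fy[\ti z]\in\Y^\xc_\perp(\ti z)$ implies $P^\xcu_\perp(\ti z)\fy[\ti z]=P^\xu_\perp(\ti z)\fy[\ti z]$. This is false for $K\ge 2$, and the paper explicitly warns about it in the remark following the lemma. The point is that $P^\xu_\perp(\ti z)\fy[\ti z]=\fy[\ti z]-\sum_k\ti c_kY^+(x-\ti z_k)$, and although $\om(Y^+(x-\ti z_k),\bar Y^j(x-\ti z_k))=0$ for the \emph{same} center, the cross terms $\om(Y^+(x-\ti z_k),\bar Y^j(x-\ti z_l))$ with $l\not=k$ do not vanish; they are of size $\eN(D_z)$. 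Hence $P^\xu_\perp(\ti z)\fy[\ti z]$ is not in $\Y^\xc_\perp(\ti z)$, let alone equal to $P^\xcu_\perp(\ti z)\fy[\ti z]$, and your argument as written would prove the too-strong bound $\|P^\xcu_\perp(\ti z)\fy[\ti z]-P^\xu_\perp(z)\fy[z]\|_\cH\lec|z-\ti z|$ without the term $\eN(D_z)\|\fy[z]\|_\cH$. The repair is short and is what the paper does: use the operator identity $P^\xcu_\perp(\ti z)=P^\xu_\perp(\ti z)P^\xc_\perp(\ti z)+O(\eN(D_z))$ in $\op(\cH)$, together with $P^\xc_\perp(\ti z)\fy[\ti z]=\fy[\ti z]$, so that the exact identity is replaced by $P^\xcu_\perp(\ti z)\fy[\ti z]=P^\xu_\perp(\ti z)\fy[\ti z]+O(\eN(D_z)\|\fy[\ti z]\|_\cH)$; this extra error is precisely the second term on the right of the stated estimate, after which your Lipschitz splitting goes through unchanged.
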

Note that the change of centers affects the unstable modes only in the quadratic order, while the remainder estimates are affected by the soliton interactions, since $P^\xcu_\perp(z)\not=P^\xu_\perp(z)$ on $\Y^\xc_\perp(z)$ for $K\ge 2$.
\begin{proof}
The equations for $\ti z$ to satisfy may be rewritten as 
\EQ{
 \Phi_{k,j}(\ti z) := \om(\vec Q_\Si[z] - \vec Q_\Si[\ti z] + \fy[z], \bar Y^j(x-\ti z_k))=0,}
for $k=1\etc K$, and $j=1\etc N$. 
We have $|\Phi_{k,j}(z)| \sim |p^j(z_k)\fy[z]| \lec \|\fy[z]\|_\cH \le\de_I$, 
\EQ{
 \p_{\ti z_{k,i}} \Phi_{k,j}(\ti z) \pt= -\om(\vec Q_\Si[z]-\sum_{l\not=k}\vec Q_l[\ti z_l]+\fy[z], \p_i\bar Y^j(x-\ti z_k))
 \pr= -\om(\vec Q_k[z_k],\p_i\bar Y^j(x-z_k))+O(|z-\ti z|+\eN(D_z)+\|\fy[z]\|_\cH),}
for $i=1\etc N$, where the leading term is integrated by parts
\EQ{
 -\om(\vec Q_k[z_k],\p_i\bar Y^j(x-z_k)) =  \s_k \om(Y^i,\bar Y^j) = \s_k \de_{i,j}C^1_\om.}
For $l\not=k$, we have 
\EQ{
 \p_{\ti z_{l,i}} \Phi_{k,j}(\ti z) = \om(\s_l Y^i(x-\ti z_l),\bar Y^j(x-\ti z_k)) = O(\eN(D_z)).}
Hence the Jacobian of $\Phi$ is invertible for $|z-\ti z|\le\de_I$ if $\de_I>0$ is chosen small enough. Then the inverse function theorem yields a unique $\ti z$ near $z$ such that $\Phi(\ti z)=0$ and $|\ti z-z|\lec |\Phi(z)|$. 

For the estimate \eqref{est center change} on the unstable modes, we use $\LR{\phi|\na Q}=0$ and the Taylor expansion 
\EQ{
 \fy[\ti z] - \fy[z] \pt=\sum_{k=1}^K \s_k[\vec Q(x-\ti z_k)-\vec Q(x-z_k)]
 \pr=\sum_{k=1}^K \s_k(z_k-\ti z_k)\cdot\na\vec Q(x-\ti z_k) + O(|\ti z-z|^2),}
where the error term is bounded in $\cH$. Hence 
\EQ{
 \pt p^+(\ti z_k)\fy[\ti z]-p^+(z_k)\fy[z] 
 \pr= [p^+(\ti z_k)-p^+(z_k)]\fy[z] + \sum_{l\not=k}\s_l(z_l-\ti z_l)\cdot p^+(\ti z_k)\na\vec Q(x-\ti z_l) + O(|\ti z-z|^2)
 \pr= O(|\ti z-z|\|\fy[z]\|_\cH + |\ti z-z|\eN(D_z) + |\ti z-z|^2).}
Using the bound $|\ti z-z|\lec\|\fy[z]\|_\cH$ in the last error term leads to the desired estimate. 
The estimate on the remainder follows from that on $\fy[\ti z]-\fy[z]$ and 
\EQ{
 P^\xcu_\perp(\ti z) = P^\xu_\perp(\ti z)P^\xc_\perp(\ti z) + O(\eN(D_z))}
in the operator norm on $\cH$. 
\end{proof}

The above lemma implies that if $\fy\in\cH$ is close to a $K$-soliton in the distance of $\cH$:  
\EQ{
 d(\fy) := \inf_{z\in(\R^N)^K,\ D_z\ge R} \|\fy-\vec Q_\Si[z]\|_\cH \ll 1}
for some large fixed $R>1$, then there exists a unique $z\in(\R^N)^K$ near any minimizing point of the above distance such that 
\EQ{
  \fy-\vec Q_\Si[z] \in P^\xc_\perp(z), \pq \|\fy-\vec Q_\Si[z]\|_\cH \sim d(\fy).}

Once the centers $z$ is configured such that $v\in\Y^\xc_\perp(z)$, it is conserved for later time along the solution, as long as  $\|v\|_\cH+\eN(D_z)$ remains small, if and only if $z$ is evolved by the system \eqref{eq z}. 
We will impose the orthogonality $v\in\Y^\xc_\perp(z)$ when investigating the behavior of a single solution. 
When comparing two nearby solutions, we will impose the orthogonality only on one of the solutions and apply the same centers to the other. 
In the collapsing process from 2-soliton to 1-soliton, we need to shift from a coordinate with 2 centers to another with 1 center, where we reconfigure the center for orthogonality at some intermediate time by the above lemma. 

\section{Energy estimates}
In this section we prepare some energy-type estimates, exploiting the damping, and taking account of the variational and the spectral structures. 
For large powers and dimensions ($p>\tf{N}{N-2}$), we also need to use the Strichartz estimate. 

\subsection{Static energy estimate}
First we investigate the static linearized energy, which appears as the quadratic part in expanding the Hamiltonian \eqref{E exp 0}, namely $\LR{\sL_\Si v|v}$. 

In order to decompose any $\fy\in H^1(\R^N)$ around the $K$-solitons, fix a cut-off function $\chi\in C^\I(\R)$ satisfying 
\EQ{ \label{def chi}
 \CAS{\chi(s)=1 &(s\le 1/3), \\ \chi(s)=0 &(s\ge 2/3).} }
With a parameter $R>0$ to be chosen shortly, consider the decomposition of any $\fy\in H^1(\R^N)$ defined by
\EQ{
 \fy = \sum_{k=1}^K \fy_k, \pq 1\le k\le K-1 \implies \fy_k:=\chi(|x-z_k|-R)\fy.}
Let $A_k^R:=\{x\in\R^N\mid R<|x-z_k|<R+1\}$. 
The decomposition is almost disjoint in the support if $R+1<D_z/2$: 
\EQ{ 
 1\le k<K \implies \supp\fy_k\cap\supp\fy_l \CAS{=\empt &(k<l<K),\\ \subset A_k^R &(l=K).} }
Moreover, $A_k^R$ are mutually disjoint for $\N\ni R<D_z/2-1$ and $k<K$. Hence we can choose $R\in(D_z/4,D_z/2-1)$, depending on $\fy$ and $z$, such that 
\EQ{ \label{scarse est}
 \max_{1\le k<K} \|\fy\|_{H^1(A_k^R)}^2 \lec D_z^{-1}\|\fy\|_{H^1(\R^N)}^2,}
provided that $D_z>1$ is large enough. 
Since 
\EQ{
 \|\fy\|_{H^1(\R^N)}^2 = \sum_{k=1}^K \|\fy_k\|_{H^1(\R^N)}^2 + \sum_{1\le k<K}2\LR{(1-\De)\fy_k|\fy_K} }
and the last summand may be bounded by \eqref{scarse est}, we deduce that 
\EQ{ \label{H1 dcp}
 \|\fy\|_{H^1(\R^N)}^2 \sim \sum_{k=1}^K \|\fy_k\|_{H^1(\R^N)}^2,}
for large $D_z>1$. The scalar linearized energy is decomposed similarly
\EQ{ \label{L dcp}
 \LR{\cL_\Si\fy|\fy} \pt= \sum_{k=1}^K \LR{\cL_\Si\fy_k|\fy_k} + 2\sum_{1\le k<K}\LR{\cL_\Si\fy_k|\fy_K}
  \pr= \sum_{k=1}^K \LR{\cL_k\fy_k|\fy_k} + O(D_z^{-1}\|\fy\|_{H^1}^2).}
Applying the linearized energy estimate \eqref{L ene} with the space translation with $z_k$ and summing it over $k=1\etc K$, we obtain 
\EQ{ 
 \sum_{k=1}^K \|\fy_k\|_{H^1}^2 \sim \sum_{k=1}^K\BR{\LR{\cL_k\fy_k|\fy_k} + C|\LR{\fy_k|\phi(x-z_k)}|^2 + C|\LR{\fy_k|\na Q(x-z_k)}|^2}.}
Combining it with \eqref{H1 dcp} and \eqref{L dcp}, we obtain 
\EQ{ \label{cL equiv}
 \|\fy\|_{H^1(\R^N)}^2 \sim \LR{\cL_\Si\fy|\fy} + C \sum_{k=1}^K\BR{ |\LR{\fy|\phi(x-z_k)}|^2 + |\LR{\fy|\na Q(x-z_k)}|^2},}
uniformly for large $D_z>1$, with the same constant $C=C(N,p)>0$ as in \eqref{L ene}. 

Next we consider the vector version of the linearized energy for any $\fy\in\cH$. Applying \eqref{cL equiv} to the first component of $\fy$ yields 
\EQ{ \label{sL equiv}
 \|\fy\|_\cH^2 \sim \LR{\sL_\Si\fy|\fy} + C\sum_{k=1}^K\BR{|\LR{\fy_1|\phi(x-z_k)}|^2 + |\LR{\fy_1|\na Q(x-z_k)}|^2}.}
For the spectral decomposition of $\fy\in\cH$
\EQ{
 y_k:=P^\xscu(z_k)\fy = \sum_{\x\in\xscu}a^\x_k Y^\x_k, \pq \z:=P_\perp^\xscu(z)\fy,}
the linearized energy is expanded as 
\EQ{ 
 \LR{\sL_\Si\fy|\fy} = \sum_{k=1}^K \LR{\sL_ky_k|y_k} + \LR{\sL_\Si\z|\z} + O(\eN(D_z)\|\fy\|_\cH^2),}
using the exponential decay of the solitons and the eigenfunctions, 
\EQ{
 \|\fy-\sum_{k=1}^K y_k-\z\|_\cH\lec \eN(D_z)\|\fy\|_\cH} 
 and the orthogonality of $\z$. 
By the orthogonality of $\z$, \eqref{sL equiv} implies $\LR{\sL_\Si\z|\z} \sim \|\z\|_\cH^2$, 
while the eigenmode part is further expanded by 
\EQ{
 \tf12\LR{\sL_ky_k|y_k} = -\al \nu^+ |a^+_k|^2  - \al \nu^-|a^-_k|^2 - 2\nu_0^2 a^+_k a^-_k + \al C_\om^1 \sum_{j=1}^N |a_k^{-j}|^2.}
Since $\al\nu^->0$ and $\al C_\om^1>0$, adding $C|a^+_k|^2$ with a big constant $C>0$ turns it into a positive quadratic form on $(a^+_k,a^-_k,a^{-1}_k\etc a^{-N}_k)$. 
Therefore, there is a constant $\mm=\mm(\al,N,p)\in[1,\I)$ such that 
\EQ{ \label{def mm}
 \mm\sum_{\x\in\xcu}|a^\x_k|^2 + \LR{\sL_ky_k|y_k} \sim \sum_{\x\in\xscu}|a^\x_k|^2 \sim \|y_k\|_\cH^2.}
Henceforth, this $\mm$ is fixed as a constant (dependent on $N,\al,p$). 
Then for any $z\in(\R^N)^K$ with large $D_z>1$, we may define a uniformly equivalent norm of $\cH$ by
\EQ{ \label{sL est}
 \|\fy\|_{\cH_z}^2 := \LR{\sL_\Si\fy|\fy} + \mm \sum_{k=1}^K \sum_{\x\in\xcu}|p^\x(z_k)\fy|^2 \sim \|\fy\|_\cH^2.}

The linearized energy is expanded for any $\fy\in\cH$ and $z\in(\R^N)^K$ as 
\EQ{
 \tf12\LR{\sL_\Si \fy|\fy} \pt=\tf12\LR{\sL_\Si P^\xcu\fy|P^\xcu\fy}+\LR{\sL_\Si P^\xcu\fy|P^\xcu_\perp\fy}+\tf12\LR{\sL_\Si P^\xcu_\perp\fy|P^\xcu_\perp\fy}
  \pr= -\al\nu^+|a^+|^2-2\nu_0^2 a^+\cdot a^- + \tf12\LR{\sL_\Si\ga|\ga}+O(\eN(D_z)\|\fy\|_\cH^2), }
with $a^\pm_k:=p^\pm(z_k)\fy$ and $\ga:=P^\xcu_\perp\fy$. Note that the $a^-$ component is almost included in $\ga$, so $a^+\cdot a^-$ is a cross term between $P^\xcu\fy$ and $P^\xcu_\perp\fy$. 
Injecting this into the Hamiltonian expansion \eqref{E exp 1} yields 
\EQ{ \label{E exp full}
 E(\vec u) \pt= KE(\vec Q) + V_\s(z) - \al\nu^+ |p^+(z)v|^2 - 2\nu_0^2 p^+(z)v\cdot p^-(z)v 
  \prQ+ \tf12\LR{\sL_\Si P^\xcu_\perp v|P^\xcu_\perp v}+O(\eN(D_z)^{p_2}+\|v\|_\cH^3)
   \pr=KE(\vec Q) + V_\s(z) + \tf 12\|v\|_{\cH_z}^2 - \mm \sum_{k=1}^K \sum_{\x\in\xcu} |p^\x(z_k)v|^2 
 \prQ+ O(\eN(D_z)^{p_2}+\|v\|_\cH^3).}

\subsection{Soliton repulsivity condition}
In the above estimate, we see that if 
\EQ{ \label{sol repul}
 \eN(D_z) \le C_V V_\s(z)}
for some constant $C_V\ge 1$, then for any $z\in(\R^N)^K$ and $v\in\cH$ such that $D_z>1$ is large enough and $\|v\|_\cH$ is small enough, both depending only on $N,\al,p,K,C_V$, 
\EQ{ \label{E exp main}
 E(\vec Q_\Si+v)-KE(\vec Q)+\mm \sum_{k=1}^K \sum_{\x\in\xcu} |p^\x(z_k)v|^2  \sim \|v\|_{\cH_z}^2 + \eN(D_z),}
where the implicit constants of the equivalence depend only on $N,\al,p,K$ and $C_V$. 
If $z$ is chosen to satisfy the orthogonality $v\in \Y^\xc_\perp(z)$, then the smallness of $\|v\|_\cH^2+\eN(D_z)$ is preserved as long as the unstable modes $p^+(z_k)v$ remain small. 
In other words, it can be destroyed only by growth of the unstable modes.  

In this paper, we consider only the cases where this condition \eqref{sol repul} holds a priori and uniformly in time. 
This condition means that the soliton interaction is repulsive in the energy sense. 
It is not true for arbitrary $K$-solitons, but it is satisfied with some absolute constant $C_V$ in the following particular cases
\begin{enumerate}
\item $K=1$
\item $K=2$ and $\s_1\s_2=-1$
\item $N=1$ and each soliton has the opposite sign of its neighbors.
\end{enumerate}
Moreover, \cite{CMYZ} proved that $\s_1\s_2=-1$ is necessary for any asymptotic 2-solitons in any dimension $N\in\N$, and \cite{CMY} proved that the sign alternation as in (3) is necessary for any asymptotic $K$-soliton of any number $K\in\N$ in one dimension $N=1$. 
Hence, if $u$ starts from a small neighborhood of an asymptotic $K$-soliton that is already well-separated, then \eqref{sol repul} holds if $K\le 2$ or $N=1$, as long as $\|v\|_\cH^2+\eN(D_z)$ is small enough. 

For $K\ge 3$ and $N\ge 2$, it seems difficult to have the repulsivity condition a priori in general, but it is possible under some symmetry restrictions. We will consider a few examples in Section \ref{sect:sym Ksol}.

\subsection{Damped energy estimate}
The radiation part $\ga$ on the stable subspace, which includes the continuous spectrum, is controlled by the following decay estimate, both in the solutions and in the difference of two solutions. 

In order to treat higher powers in higher dimensions, we need to use the Strichartz estimate too, when $f$ does not map $H^1(\R^N)$ into $L^2(\R^N)$, namely $p>\tf{N}{N-2}$. Taking advantage of $N\le 6$, however, we may avoid using the dual Strichartz norm, by working solely with the $L^p_tL^{2p}_x$ norm, which is admissible for $p\in[2,p^\star(N))$ and $N\le 6$ (as well as in the energy-critical case $p=p^\star(N)$ for $3\le N\le 6$). For any $\fy\in C((0,T);\cH)$ and $\psi\in L^1((0,T);L^2_x)$, and $0<t_0<t_1<T$, we denote the local Strichartz norm by
\EQ{
 \pt \|\fy\|_{S(t_0,t_1)} := \|\fy_1\|_{L^p_t(t_0,t_1;L^{2p}_x)}, \pq \|\fy\|_{S(t_1)}:=\|\fy\|_{S(\max(0,t_1-1),t_1)}, 
 \pr \|\psi\|_{S^*(t_0,t_1)} := \|\fy\|_{L^{p'}_t(t_0,t_1;L^{2p'}_x)}, \pq \|\psi\|_{S^*(t_1)} := \|\psi\|_{S^*(\max(0,t_1-1),t_1)}, }
where $p'=\tf{p}{p-1}$ is the H\"older conjugate, so $\||\fy_1|^{p-1}\|_{S^*(\cdot)}\le\|\fy\|_{S(\cdot)}^{p-1}$. 

In the simpler case where $f:H^1(\R^N)\to L^2(\R^N)$ or $p\le\tf{N}{N-2}$, we may ignore those  Strichartz norms and the nonlinear part of the following lemma. 
\begin{lemma} \label{lem:linene damp}
For any $N\in\N$, $\al\in(0,\I)$, $p\in(2,p^\star(N))$ and $K\in\N$, there exist $\mu\in(0,\al)$, $\de_\Ga^\star\in(0,1)$ and $C_\Ga\in(1,\I)$ such that the following holds. 
Let $T>0$, $\s\in\{\pm1\}^K$, $z\in C^1([0,T];(\R^N)^K)$, $g\in L^1((0,T);\cH)$, and $\ga\in C([0,T];\cH)$ satisfy
\EQ{ \label{lineq ga}
 (\p_t-J\sL_\Si^\al)\ga = g, \pq \ga\in \Y_\perp^\xcu(z), \pq |\dot z|+\eN(D_z) \le \de_\Ga^\star,}
on $0<t<T$. Then we have for $0<t<T$
\EQ{  \label{ene decay}
 \|\ga(t)\|_\cH + \|\ga\|_{S(t)} \le C_\Ga\BR{e^{-\mu t}\|\ga(0)\|_\cH + \int_0^t e^{-\mu(t-s)}\|g(s)\|_\cH ds}.} 
If the equation \eqref{lineq ga} is replaced with
\EQ{ \label{nonlineq ga}
 (\p_t-J\sL_\Si^\al)\ga = \ti g, \pq \ti g_1=g_1, \pq |\ti g_2| \le |g_2|+B(|\ga_1|^2+|\ga_1|^p+|h\ga|),}
for some space-time functions $\ti g,g,h$ and some constant $B\in(1,\I)$, then we have the same estimate \eqref{ene decay}, provided for $0<t<T$
\EQ{ \label{small gh}
 \text{RHS of \eqref{ene decay}} + \|h\|_{S^*(t)} \le \de_\Ga^\star/B.}
\end{lemma}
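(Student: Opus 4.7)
The plan is to run a modified-energy argument on $\ga$ in the $\cH$-norm and then promote the $\cH$-control to the Strichartz norm by a short-interval perturbation argument. I would introduce the Lyapunov functional
\[\cE_\e(\ga) := \tf12 \LR{\sL_\Si \ga | \ga} + \e\bigl[\LR{\ga_1|\ga_2} + \al\|\ga_1\|_{L^2}^2\bigr].\]
Since $\ga \in \Y^\xcu_\perp(z)$, the orthogonality kills the sum over $\x \in \xcu$ in \eqref{sL est}, so $\LR{\sL_\Si \ga | \ga} \sim \|\ga\|_\cH^2$ uniformly for $D_z$ large; Cauchy--Schwarz on the cross term then gives $\cE_\e \sim \|\ga\|_\cH^2$ once $\e = \e(\al,N,p) > 0$ is small enough.

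Next, differentiating $\cE_\e$ along $\dot\ga_1 = \ga_2 + g_1$ and $\dot\ga_2 = -\cL_\Si \ga_1 - 2\al \ga_2 + g_2$, the self-adjointness of $\cL_\Si$ cancels the conservative cross-term and one finds
\[\dot\cE_\e = -(2\al-\e)\|\ga_2\|_{L^2}^2 - \e \LR{\cL_\Si \ga_1|\ga_1} + O(\|g\|_\cH\|\ga\|_\cH) + O\bigl((|\dot z|+\eN(D_z))\|\ga\|_\cH^2\bigr),\]
with the last error absorbing the contribution of $\dot\cL_\Si$ through the movement of $z$. The coercivity $\LR{\cL_\Si \ga_1 | \ga_1} \ge c\|\ga_1\|_{H^1}^2 - C\|\ga_2\|_{L^2}^2$, again from \eqref{sL equiv} restricted to $\Y^\xcu_\perp$, combined with $|\dot z|+\eN(D_z) \le \de_\Ga^\star$ small, yields $\dot\cE_\e + 2\mu\cE_\e \le C\|g\|_\cH \cE_\e^{1/2}$ for some $\mu \in (0, \al)$; applying Gronwall to $\sqrt{\cE_\e}$ then produces the $\cH$-part of \eqref{ene decay}.

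For the Strichartz piece, I would apply the inhomogeneous Strichartz estimate for the damped Klein--Gordon propagator on each unit interval $[s, s+1]$, treating the potential $f'(Q_\Si)$ perturbatively (it is bounded in $L^\infty_tL^\infty_x$ uniformly in $z$ by the exponential decay of $Q$). This gives $\|\ga\|_{S(s, s+1)} \lesssim \|\ga(s)\|_\cH + \|g\|_{L^1((s,s+1); L^2_x)}$; inserting the $\cH$-bound already proved and summing geometrically in $s$ yields \eqref{ene decay}. For the nonlinear version, the key identities $\||\ga_1|^p\|_{L^1_t L^2_x((s,s+1)\times\R^N)} = \|\ga\|_{S(s,s+1)}^p$ and $\|h\ga_1\|_{L^1_t L^2_x((s,s+1)\times\R^N)} \le \|h\|_{S^*(s,s+1)}\|\ga\|_{S(s,s+1)}$, both by H\"older in time, allow a bootstrap on $X(t):=\|\ga(t)\|_\cH+\|\ga\|_{S(t)}$: the quadratic $|\ga_1|^2$ term is absorbed by the $\cH$-smallness of $\ga$, the $|h\ga|$ term by the hypothesis $\|h\|_{S^*}\le\de_\Ga^\star/B$, and the pure power by a gain $X^{p-1}$ under \eqref{small gh}.

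The Lyapunov step is essentially routine once one uses the orthogonality to make the quadratic form coercive; the real work is the Strichartz bookkeeping for $|\ga_1|^p$ in the regime $p > \tf{N}{N-2}$, where one must carefully use the pair $(L^p_tL^{2p}_x,L^{p'}_tL^{2p'}_x)$ (Strichartz-admissible for the Klein--Gordon when $N\le 6$) so that H\"older in time over unit intervals absorbs the pure power, and verify that the Strichartz constant for the damped propagator perturbed by $f'(Q_\Si)$ remains uniform in $z$ for $D_z$ large and over long time intervals.
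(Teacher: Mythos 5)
Your proposal is correct and follows essentially the same route as the paper: its Lyapunov functional $n=\tf12\LR{\sL_\Si^\ka\ga|\ga}-\ka(\al-\ka)\|\ga_1\|_{L^2}^2$ coincides with your $\cE_\e$ up to the sign of the lower-order $\|\ga_1\|_{L^2}^2$ correction, and both yield $\dot n+2\mu n\lec\|g\|_\cH n^{1/2}$ via the orthogonality-based coercivity. The Strichartz step on unit intervals (potential and damping treated as $L^1_tL^2_x$ sources) and the bootstrap closing the nonlinear terms through the $(L^p_tL^{2p}_x,\,L^{p'}_tL^{2p'}_x)$ H\"older pairing are exactly the paper's argument.
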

\begin{proof}
First we prove the $\cH$ bound in the linear case. 
For a small constant $\ka\in(0,\al)$ to be fixed shortly, let 
\EQ{
 n(t) := \tf12\LR{\sL_\Si^\ka \ga|\ga} - \ka(\al-\ka)\|\ga_1\|_{L^2}^2.}
The orthogonality $\ga\in\Y_\perp^\xcu(z)$ and \eqref{sL equiv} imply that $n\sim\|\ga\|_\cH^2$ if $\ka>0$ is small enough. Fix such $\ka\in(0,\al)$. Using the equation of $\ga$, we have
\EQ{
 \dot n \pt= \LR{\tf12(\sL_\Si^\ka + (\sL_\Si^\ka)^*)\ga - 2\ka(\al-\ka)(\ga_1,0)|J\sL_\Si^\al\ga+g}
 \prQ+\dot z\cdot\tf12\LR{f''(Q_\Si)\na Q_\Si|\ga_1^2} 
 \pr=-\ka n - (\al-\ka)\|\ga_2+\ka\ga_1\|_{L^2}^2 + O(|\dot z|\|\ga\|_\cH^2 + \|\ga\|_\cH\|g\|_\cH)
 \pr\le -\frac{\ka}{2}n + C\|\ga\|_\cH\|g\|_\cH,}
since $|\dot z|\le \de_\Ga^\star$ is small and $n\sim\|\ga\|_\cH^2$. Thus we obtain 
\EQ{
 (\p_t+\ka/4)\sqrt{n} = \tf12 n^{-1/2}(\dot n+\ka n/2) \lec \|g\|_\cH}
and its integral from $t=0$ yields
\EQ{
 \|\ga\|_\cH\sim \sqrt{n} \pt\le e^{-\frac{\ka}{4}t}\sqrt{n(0)} + C\int_0^t e^{-\frac{\ka}{4}(t-s)}\|g(s)\|_\cH ds,}
as desired, with $\mu=\ka/4$. 

It remains to estimate the Strichartz norm. Fix any $t_1\in(0,T)$ and let $t_0:=\max(t_1-1,0)$. 
Applying the Strichartz estimate to the Klein-Gordon equation
\EQ{
 (\p_t-J\smat{-\De+1 & 0 \\ 0 & 1})\ga = g + \smat{0 \\ f'(Q_\Si)\ga_1-2\al\ga_2}, }
on $(t_0,t_1)$, we obtain 
\EQ{
 \|\ga\|_{S(t_0,t_1)} \pt\lec \|\ga(t_0)\|_\cH + \|g\|_{L^1_t(t_0,t_1;\cH)} + \|f'(Q_\Si)\ga_1-2\al\ga_2\|_{L^1_t(t_0,t_1;L^2)}
 \pr\lec \|\ga\|_{L^\I_t(t_0,t_1;\cH)} + \|g\|_{L^1_t(t_0,t_1;\cH)}.  }
Injecting the above bound on $\cH$ yields the same bound on $\|\ga\|_{S(t)}$. 

Next we prove the nonlinear case with the weaker decay exponent $e^{-\frac{\mu}{2}t}$ and using the linear estimate \eqref{ene decay}.  Replacing the constant $\mu>0$ in the end, we get the conclusion with the same $\mu>0$ in both the linear and the nonlinear cases.  

Let $\cN:=|\ga_1|^2+|\ga_1|^p+|h\ga_1|$. On one hand, using the free Strichartz estimate as above, we obtain for small $t_0\in(0,T)$ 
\EQ{
 \|\ga\|_{L^\I_t(0,t_0;\cH\cap S(t))} \lec \|\ga(0)\|_\cH + \|g\|_{L^1_t(0,t_0;\cH)} + t_0\|\ga\|_{L^\I_t(0,t_0;\cH)} + B\|\cN\|_{L^1_t(0,t_0;L^2)},}
where the last term is bounded by H\"older
\EQ{
 \|\cN\|_{L^1_t(I;L^2_x)} \lec [\|\ga\|_{S(I)} + \|\ga\|_{S(I)}^{p-1} + \|h\|_{S^*(I)}]\|\ga\|_{S(I)},}
which holds uniformly on any interval $I\subset[0,T]$ with length $|I|\le 1$. Hence by the dominated convergence, we obtain for some small $t_0>0$ 
\EQ{ \label{ga small t}
 \|\ga\|_{L^\I_t(0,t_0;\cH\cap S(t))} \lec \|\ga(0)\|_\cH + \|g\|_{L^1_t(0,t_0;\cH)}.}
On the other hand, applying \eqref{ene decay} to \eqref{nonlineq ga} yields
\EQ{ \label{damp nonlin case}
 \|\ga(t)\|_\cH + \|\ga\|_{S(t)} \le C_\Ga e^{-\mu t}\|\ga(0)\|_\cH + C_\Ga \int_0^t e^{\mu(s-t)}[\|g(s)\|_\cH+B\|\cN(s)\|_{L^2}]ds. }

Now for some big constant $C_0>C_\Ga$, let $T_0\in[0,T]$ be the maximal time such that for $0<t<T_0$ we have 
\EQ{
 \|\ga(t)\|_\cH+\|\ga\|_{S(t)} \le C_0 M(t), \pq M(t):=e^{-\frac\mu 2 t}\|\ga(0)\|_\cH + \int_0^t e^{-\frac{\mu}{2}(t-s)}\|g(s)\|_\cH ds,}
namely the desired estimate (with the worse constants than the linear case). 
\eqref{ga small t} implies that $T_0>0$ if $C_0$ is larger than some constant depending only on $N$. 
Then for $0<t_0<t_1<T_0$, we have using \eqref{est cN} and the assumption \eqref{small gh} with $\mu/2$,
\EQ{ \label{est cN}
 B\|\cN\|_{L^1_t(t_0,t_1;L^2_x)}\lec C_0C_\Ga^{-1} \de_\Ga^\star\|\ga\|_{S(t_0,t_1)},}
and so 
\EQ{
 C_\Ga B \int_0^t e^{\mu s}\|\cN(s)\|_{L^2_x}ds \pt\lec C_0 \de_\Ga^\star e^{\mu t}\|\ga\|_{S(t)}+C_0 \de_\Ga^\star \int_0^t e^{\mu s}\|\ga\|_{S(s)}ds
 \pr\lec C_0^2 \de_\Ga^\star e^{\mu t}M(t) + C_0^2 \de_\Ga^\star \int_0^t e^{\mu s}M(s)ds,}
and the last integral is estimated directly using Fubini
\EQ{
 \pt\int_0^t \int_0^s e^{\frac{\mu}{2}(2s-s+s_1)}\|g(s_1)\|_\cH ds_1ds
 \le \tf{2}{\mu}\int_0^t e^{\frac{\mu}{2}(t+s_1)}\|g(s_1)\|_\cH ds_1
 \lec e^{\mu t}M(t).}
Thus we obtain 
\EQ{
 C_\Ga B \int_0^t e^{\mu(s-t)}\|\cN(s)\|_{L^2_x}ds \lec C_0^2 \de_\Ga^\star M(t),}
and injecting it into \eqref{damp nonlin case}, 
\EQ{
 \|\ga(t)\|_\cH + \|\ga\|_{S(t)} \le C_\Ga M(t) + C_1 C_0^2 \de_\Ga^\star M(t),}
for some constant $C_1\in(0,\I)$ depending only on $N,\al$. 
After fixing $C_0\ge 2C_\Ga$ such that the above argument works, we may take $\de_\Ga^\star$ small enough such that $C_1C_0\de_\Ga^\star<1/2$. Then the above estimate implies 
\EQ{
 \|\ga(t)\|_\cH + \|\ga\|_{S(t)} \le (C_\Ga+C_0/2)M(t) < C_0M(t),}
and the maximality of $T_0$ implies that $T_0=T$. 
This means the desired estimate \eqref{ene decay} with $C_\Ga$ replaced with $C_0$ and $\mu$ replaced with $\mu/2$ from the linear case. 
\end{proof}

\section{Full-time dynamics near repulsive multi-solitons}
In this section, we investigate all possible behavior of solutions while they are in a small neighborhood around $K$-solitons of sufficient distance. 
We assume in this section that $u$ is a solution of \eqref{NLKG} in $\cH$ near a $K$-soliton at $z\in(\R^K)^N$ on some time interval $I$, where $u$ is decomposed as before in \eqref{def v} with 
\EQ{
 \pt \vec u(t) = \vec Q_\Si(t) + v(t), 
 \pq v\in \Y^\xc_\perp(z), \pq a^+_k:=p^+(z_k)v, \pq \ga:=P_\perp^\xcu(z)v,}
and that $\|v\|_\cH$ is small enough and $D_z$ is large enough on $I$. 
We also assume that the soliton repulsivity \eqref{sol repul} holds on $I$. 
As mentioned above, it is true for every asymptotic $K$-soliton with $K\le 2$ or $N=1$ by \cite{CMYZ,CMY}. 

Note that the orthogonality $v\in \Y^\xc_\perp(z)$ simplifies \eqref{sL est} to 
\EQ{
 \|v\|_{\cH_z}^2 = \LR{\sL_\Si v|v} + \mm|a^+|^2 \sim \|v\|_\cH^2,}
where $a^+:=(a^+_1\etc a^+_K)\in\R^K$. 
Hence, in view of the energy expansion \eqref{E exp 1}, we use the following energy quantity to control both the remainder $v$ and the soliton distance
\EQ{
 \sN(\vec u,z) \pt:= E(\vec u)-KE(\vec Q) + \mm|\ap^+(z)\vec u|^2 
 \pr=  \tf12\|v\|_{\cH_z}^2 + V_\s(z) + O(\eN(D_z)^{p_2} + \|v\|_\cH^3)
 \pn\sim \|v\|_{\cH}^2 + \eN(D_z),}
as long as $\|v\|_\cH\ll 1\ll D_z$, under the soliton repulsivity condition \eqref{sol repul}. 
More precisely, there are a small constant $\de_E^\star\in(0,1)$ and a large constant $C_E\in(1,\I)$, depending only on $N,\al,p,K,C_V$, such that 
\EQ{ \label{bd CE}
 \sN(\vec u,z)\le |\de_E^\star|^2 \implies \CAS{C_E^{-1}\sN(\vec u,z) \le |a^+|^2+\|v\|_\cH^2+\eN(D_z) \le C_E \sN(\vec u,z), \\ \tf34\sN(\vec u,z) \le \tf12\|v\|_{\cH_z}^2 + V_\s(z) \le \tf54\sN(\vec u,z),
 \\ D_z>D_\star,} }
where $D_\star$ is the constant in Lemma \ref{lem:spec coord} to enable the spectral decomposition. 
The basic estimates on the equations in \eqref{eq z est}--\eqref{eq ga simp} may be collected as 
\EQ{ \label{est eqs}
 |\dot z| + |(\p_t-\nu^+)a^+| + \|\cN(v,z)\|_\cH \lec \|v\|_\cH^2 + \eN(D_z) \sim \sN(\vec u,z).}

The advantage of $\sN(\vec u,z)$ is its simple time evolution, but to measure the closeness to the solitons together with their separation, it is more convenient to use 
\EQ{
 \U\sN_0(\fy) := \inf_{z\in(\R^N)^K} \sN_0(\fy,z), \pq \sN_0(\fy,z):=\|\fy-\sum_{k=1}^K \s_k \vec Q(x-z_k)\|_\cH^2 + \eN(D_z),}
then $\U\sN_0:\cH\to[0,\I)$. If $\U\sN_0(\fy)$ is small enough, depending only on $N,\al,p,K$, then Lemma \ref{lem:center} yields a unique $z\in(\R^N)^K$ such that 
\EQ{
 \fy[z]=\fy-\vec Q_\Si\in \Y^\xc_\perp(z), \pq \sN_0(\fy,z) \sim \U\sN_0(\fy).}

Henceforth we will need many positive numbers depending only on $N,\al,p,K,C_V$. 
For brevity we will often call them constants, ignoring the dependence, since those parameters $N,\al,p,K,C_V$ are fixed. 

\subsection{Before instability}
First we consider the solution $u$ satisfying initially at $t=0$ 
\EQ{  \label{cond 0}
 2\de_0 \mm^{1/2}|a^+|  < \sN(\vec u,z) < \de_0^2.}
for some $\de_0\in(0,\de_E^\star]$. The right inequality implies trivially
\EQ{ \label{init energy bd}
 E(\vec u)-KE(\vec Q) = \sN(\vec u,z)-\mm|a^+|^2 < \de_0^2,}
which is preserved for $t>0$. Then for $t>0$ and as long as 
\EQ{ \label{region 0}
 2\de_0 \mm^{1/2}|a^+| \le \sN(\vec u,z) \le 2\de_0^2,}
holds, we have 
\EQ{
 \de_0^2 > E(\vec u)-KE(\vec Q) \ge (1-(2\de_0)^{-2}\sN(\vec u,z))\sN(\vec u,z) \ge \sN(\vec u,z)/2,}
so the right inequality of \eqref{region 0} is preserved, as long as the left one. 

\subsection{Instability at $O(\de^2)$} \label{ss:inst order2}
Next we consider the solution $u$ satisfying initially at $t=0$ 
\EQ{ \label{region 1}
 B_1|a^+|^2 \le \sN(\vec u,z) \le \de_1|a^+|,}
for some $B_1\in[2\mm,\I)$ and $\de_1\in(0,\de_1^*]$, where $\de_1^*\in(0,\de_E^\star]$ is a small constant to be determined. 
The above condition roughly means 
\EQ{
 \|v\|_\cH^2+\eN(D_z) \ll |a^+| \ll \|v\|_\cH+\eN(D_z)^{1/2},}
that the unstable modes have some marginal size but much smaller than the others. 
Note that this size comparison is not essentially affected by the modulation of centers, because of \eqref{est center change}. 
\eqref{region 1} also implies smallness for all the components
\EQ{ \label{bd region 1}
 |a^+| \le \de_1/B_1, \pq \sN(\vec u,z) \le \de_1|a^+| \le \de_1^2/B_1.}
In particular, $\de_1^*\le\de_E^\star$ implies 
\EQ{
 \sN(\vec u,z) \le \de_1^2 \le |\de_E^\star|^2}
in this region. The left inequality of \eqref{region 1} may be rewritten as 
\EQ{ \label{linq}
 (B_1-\mm)|a^+|^2 \le E(\vec u) - KE(\vec Q)}
or
\EQ{ \label{e-bd reg1}
  \sN(\vec u,z) \le \tf{B_1}{B_1-\mm}[E(\vec u)-KE(\vec Q)] \le 2[E(\vec u)-KE(\vec Q)].} 

As long as the right inequality of \eqref{region 1} is valid, we have from \eqref{est eqs}
\EQ{ \label{eq a reg1}
 |(\p_t-\nu^+)a^+| \lec \sN(\vec u,z) \le \de_1|a^+|,}
hence fixing $\de_1^*>0$ small enough ensures 
\EQ{ \label{grw a reg1}
 |(\p_t-\nu^+)a^+| \le \tf{\nu^+}{2}|a^+|, \pq
 \p_t|a^+|^2 = 2a^+\cdot\dot a^+ \ge \nu^+|a^+|^2.}
In particular, $|a^+|$ is monotonically and exponentially growing. Since the right side of \eqref{linq} is non-increasing, \eqref{linq} (or the left inequality of \eqref{region 1}) has to break down in finite time, while 
\EQ{ \label{E/a mono}
  \p_t \frac{\sN(\vec u,z)}{|a^+|}=\frac{\p_tE(\vec u)}{|a^+|}+\frac{2\mm|a^+|^2-\sN(\vec u,z)}{|a^+|^2}\p_t|a^+| \le 0}
preserves the right inequality of \eqref{region 1}. 

\subsection{Instability at $O(\de)$}  \label{ss:inst-region2}
Next we consider the solution $u$ satisfying initially at $t=0$ 
\EQ{ \label{region 2}
 \sN(\vec u,z) \le B_2|a^+|^2, \pq |a^+| \le \de_2,}
for some $B_2\in[2\mm,\I)$ and $\de_2\in(0,B_2^{-1}\de_2^*]$, where $\de_2^*\in(0,\de_E^\star]$ is a small constant to be determined. 
The above condition roughly means 
\EQ{
 \|v\|_\cH+\eN(D_z)^{1/2} \lec |a^+| \ll 1,}
that the unstable modes are dominant though their absolute size is still small. 
$\de_2\le B_2^{-1}\de_2^*$ and $\de_2^*\le\de_E^\star$ imply
\EQ{
 \sN(\vec u,z) \le B_2|a^+|^2 \le B_2\de_2^2 \le (\de_2^*)^2 \le |\de_E^\star|^2}
in this region. 
We have seen above that the solutions starting from the region \eqref{region 1} enter in this region in finite time. 
As long as \eqref{region 2} is valid, we have from \eqref{est eqs}
\EQ{
 |(\p_t-\nu^+)a^+| \lec \sN(\vec u,z) \le B_2|a^+|^2 \le B_2\de_2^2 \le \de_2^* \de_2.}
Let $a^+_*(t):=e^{\nu^+t}a(0)$ and assume 
\EQ{ \label{boot asm}
 |a^+|\le 2|a^+_*|\le 4|a^+| \le 4\de_2} 
for some time. Then from the Duhamel formula we obtain 
\EQ{
 \pt |a^+-a^+_*|+|\dot a^+/\nu-a^+_*| 
  \lec \int_0^t e^{\nu^+(t-s)}B_2|a^+(s)|^2 ds
 \lec B_2|a^+|^2 \lec B_2\de_2|a^+|,}
Taking $\de_2^*>0$ small enough ensures that the above inequality implies 
\EQ{ \label{grw a reg2}
 |a^+-a^+_*|+|\dot a^+/\nu-a^+_*| \le \tf13|a^+_*|,}
and so 
\EQ{
 \pt \tf 23 \le |a^+|/|a^+_*|,\ |\dot a^+/\nu^+|/|a^+_*| \le \tf 43, 
 \pq \p_t|a^+|^2 = 2a^+\cdot\dot a^+ \ge \tf89\nu^+|a^+_*|^2.}
In particular, the first inequalities are stronger than \eqref{boot asm} except $|a^+|\le\de_2$. 
Moreover, the last inequality implies that $|a^+|$ is increasing, while the left inequality of \eqref{region 2} is equivalent to 
\EQ{
 E(\vec u)-KE(\vec Q) \le (B_2-\mm)|a^+|^2,}
where the left side is non-increasing. Thus we deduce that all the above inequalities are valid as long as $|a^+|\le\de_2$, which has to break down in finite time, since $|a^+|$ is exponentially increasing.

On the other hand, the centers are almost fixed, since \eqref{est eqs} implies 
\EQ{
 |z_k-z_k(0)| \lec \int_0^t B_2|a^+_*(s)|^2 ds \lec B_2|a^+|^2 \le \de_2^* |a^+| \le \de_2^* \de_2.}
The energy decay Lemma \ref{lem:linene damp} applied to \eqref{eq ga est} with \eqref{est eqs} yields  (for $\de_2^*$ small enough)
\EQ{
 \|\ga\|_{\cH\cap S(t)} \lec e^{-\mu t}\|\ga(0)\|_\cH + \int_0^t e^{-\mu(t-s)}B_2|a^+(s)|^2 ds \lec e^{-\mu t}\|\ga(0)\|_\cH + B_2|a^+|^2.}
So the growth of the other components is much slower than $|a^+|$. 
In particular, 
\EQ{
 \eN(D_z)+\|\ga\|_\cH^2 \lec \sN(\vec u(0),z(0)) + (B_2|a^+|^2)^2 }
and so the energy \eqref{E exp full} is estimated by 
\EQ{ \label{energy eject}
 E(\vec u) - K E(\vec Q) \le -\tf{\al\nu^+}{2}\BR{|a^+|^2 - (2c_E)^{-2}\sN(\vec u(0),z(0))} }
for some constant $c_E\in(0,1)$, provided that $\de_2^*>0$ is chosen small enough. 
The way putting the constant $c_E$ may look artificial, but it is to simplify a condition between $\de_0$ and $\de_2$, namely \eqref{energy dec}.

\subsection{Exponential instability}
We may combine the above three regions to cover all initial data around the $K$-soliton $\vec Q_\Si$, namely 
\EQ{ \label{init region}
 \sN(\vec u,z)<\de_0^2,}
if we choose the parameters $\de_0,\de_1,\de_2,B_1,B_2$ such that 
\EQ{ \label{cond param 1}
 \pt 2\mm\le B_1 \le B_2, \pq \de_1\le \de_1^*, \pq \de_2 \le B_2^{-1}\de_2^*,
 \pq \de_0 \le  \min(\de_E^\star, \tf12 \mm^{-1/2}\de_1).}
Let $u$ be a solution satisfying initially \eqref{init region} at $t=0$, and let $T^*\in(0,\I]$ be its maximal existence time. 
First, we have \eqref{init energy bd} for all $t\in[0,T^*)$. Definte $T_1\in[0,T^*]$ by 
\EQ{
 T_1 := \inf\{t\in[0,T^*) \mid \sN(\vec u,z) \le \de_1|a^+|\}.}
If the set is empty, then let $T_1:=T^*$. 

For $0\le t<T_1$, we have $\sN(\vec u,z)>\de_1|a^+|\ge 2\de_0\mm^{1/2}|a^+|$, so the solution $u$ is in the region \eqref{region 0} before instability. If $T_1<T^*$, then define $T_2\in[T_1,T^*]$ by 
\EQ{
 T_2 := \inf\{t\in[T_1,T^*) \mid \sN(\vec u,z)\le B_2|a^+|^2\}.}
For $T_1\le t<T_2$, we have $B_1|a^+|^2\le B_2|a^+|^2 \le \sN(\vec u,z)\le\de_1|a^+|$, so the solution is in the region \eqref{region 1} of instability at $O(\de^2)$, since in Section \ref{ss:inst order2} we have seen that $\sN(\vec u,z)\le\de_1|a^+|$ is preserved by \eqref{E/a mono}. 
Moreover $\sN(\vec u,z)\ge B_1|a^+|^2$ has to break down in finite time, which implies $T_2<T^*$. Define $T_3\in[T_2,T^*]$ by 
\EQ{
 T_3:=\inf\{t\in[T_2,T^*) \mid |a^+|\ge \de_2\}.}
If the set is empty, then let $T_3:=T^*$. 
For $T_2\le t<T_3$, the solution is in the region \eqref{region 2} for the instability of $O(\de)$, since $\sN(\vec u,z)<B_2|a^+|^2$ is preserved as long as $|a^+|\le\de_2$, as observed in Section \ref{ss:inst-region2}, and moreover, $T_3<T^*$. 

The starting time $T_1$ of exponential growth may be detected by another choice of centers, because of \eqref{est center change}: If for some $T\in[0,T^*)$ and $\ti z\in(\R^N)^K$
\EQ{
 \sN_0(\vec u(T),\ti z) \ll |\ap^+(\ti z)\vec u(T)| \ll 1, }
then $T_1\le T$, and also $|a^+-\ap^+(\ti z)\vec u(T)| \ll |\ap^+(\ti z)\vec u(T)|$. 

Notice the qualitative difference between the estimates on $[T_1,T_2)$ and on $[T_2,T_3)$, namely \eqref{grw a reg1} and \eqref{grw a reg2}. 
The latter also preserves the direction $a^+/|a^+|$ essentially, but the former does not. 
So we need more detailed information on the behavior of solutions for $t<T_2$, in particular, distinguishing two regimes depending whether the interaction between the solitons is dominant or not. 
The following argument applies partly to the region \eqref{region 2} as well, namely for $t>T_2$, but it is less important in the latter region, as the exponential growth is dominating everything else. 

\subsection{Radiation dominant dynamics} \label{ss:rad dom}
Consider a solution $u$ satisfying on a time interval $[0,T]$
\EQ{ \label{region rad}
 \de_r^{-1}\max(|a^+|,\eN(D_z)) \le \sN(\vec u,z)^{1/2} \le \de_r}
for some $\de_r\in(0,\de_r^*]$, where $\de_r^*\in(0,\de_E^\star]$ is a small constant to be determined depending only on $N,\al,p,K,C_V$, while $\de_E^\star$ is the constant in \eqref{bd CE}.  This condition roughly means 
\EQ{
 |a^+| + \eN(D_z) \ll \|v\|_\cH \ll 1.}
In particular it implies 
\EQ{ \label{ga dom}
 \|v\|_\cH \le \|\ga\|_\cH + C(1+O(\eN(D_z))|a^+| \le  2\|\ga\|_\cH,}
if $\de_r^*>0$ is chosen small enough. 
Then we have from \eqref{est eqs} 
\EQ{
 |\dot z| + \|N(v,z)\|_\cH \lec \sN(\vec u,z) \lec \de_r\|\ga\|_\cH}
Applying Lemma \ref{lem:linene damp} to $\ga$, and using \eqref{region rad}, we obtain via a bootstrapping argument 
\EQ{ \label{ga radecay}
  \|v\|_{\cH\cap S(t)} \sim \|\ga\|_{\cH \cap S(t)} \lec e^{-\frac{\mu}{2} t}\|\ga(0)\|_\cH.}
The detail of the bootstrapping is as follows. Let $T'\in(0,T]$ be the maximal time such that for $0\le t\le T'$ we have $\|\ga\|\le Be^{-\frac{\mu}{2}t}\|\ga(0)\|_\cH$ for some large constant $B\in(1,\I)$. Then the lemma yields 
\EQ{
 \|\ga\|\lec e^{-\mu t}\|\ga(0)\|_\cH + \int_0^t e^{\mu(s-t)}[\|J\vec N^0\|_\cH+\|N(v,z)\|_\cH](s) ds,}
for $0\le t\le T'$, and the last integral is bounded by 
\EQ{
 \int_0^t e^{\mu(s-t)}\sN(\vec u,z)(s)ds 
 \lec \int_0^t \de_r Be^{\mu(s-t)-\frac{\mu}{2}s}\|\ga(0)\|_\cH ds
 \lec \de_r B e^{-\frac{\mu}{2}t}\|\ga(0)\|_\cH.}
Hence if $B>1$ is large enough and $\de_r\le\de_r^*$ is small enough, then we obtain 
$\|\ga\|\le \tf{B}{2}e^{-\frac{\mu}{2}t}\|\ga(0)\|_\cH$ for $0\le t\le T'$, which implies $T'=T$ because of the maximality of $T'$. Thus we obtain \eqref{ga radecay} on $[0,T]$. 
Then integrating $\dot z$ with the above estimates yields
\EQ{
  |z-z(0)| \lec \de_r\|\ga(0)\|_\cH \lec \de_r^2,}
which is small by choice of $\de_r$. 
Thus $\|v\|_\cH$ is exponentially decaying, while $z$ is not essentially moving. 
Hence the upper bound on $\eN(D_z)$ in \eqref{region rad} has to break down in finite time. Indeed, we obtain an upper bound on $T$, depending on $\|v(0)\|_\cH/\eN(D_z(0))$: 
\EQ{
 \eN(D_z(0)) \lec \eN(D_z(T)) \lec \|v(T)\|_\cH \lec \|v(0)\|_\cH e^{-\frac{\mu}{2} T}.}
However \eqref{region rad} may well break down from the upper bound on $|a^+|$ before this bound.  
In the region \eqref{region 1} with $\de_rB_1\ge 1$, the condition \eqref{region rad} can break down only by $\eN(D_z)\le \de_r\sN(\vec u,z)$. 

\subsection{Soliton dominant dynamics} \label{ss:soliton dynamics}
Next consider a solution $u$ satisfying on a time interval $[0,T]$
\EQ{ \label{region sol}
 \|v\|_{\cH_z}^2 < V_\s(z), \pq \sN(\vec u,z) \le \de_s^2}
for some small $\de_s\in(0,\de_E^\star]$. The smallness requirement is to be determined in this subsection. 
Since $\de_s\le\de_E^\star$, \eqref{bd CE} implies 
\EQ{
 \tf12\sN(\vec u,z) \le V_\s(z) \le \tf54\sN(\vec u,z).}
On the other hand, \eqref{eq z asy} implies 
\EQ{ \label{eq V}
 \p_t C^1_\om V_\s(z) = \na_z V_\s(z)\cdot\dot z = -|\na_z V_\s(z)|^2 + O(\eN(D_z)(\eN(D_z)^{p_1}+\|v\|_\cH^2)),}
so, using \eqref{sol repul},  
\EQ{ 
 |\p_t V_\s(z)| \lec \eN(D_z)^2 + \eN(D_z)\|v\|_\cH^2 \lec \de_s^2 V_\s(z).}
In particular, choosing $\de_s>0$ small enough ensures 
\EQ{ \label{slow decay V}
 \de:=\min(\nu^+,\mu)/2 \implies V_\s(z(s)) \le e^{\de|s-t|}V_\s(z(t))}
for $s,t\in [0,T]$. Using this with \eqref{est eqs} in the Duhamel formula for $a^+$ from $t=T$, we obtain
\EQ{
 |a^+| \pt\lec e^{\nu^+(t-T)}|a^+(T)| + \int_t^T e^{\nu^+(t-s)}e^{\de(s-t)}V_\s(z(t))ds
   \pr\lec e^{\nu^+(t-T)}|a^+(T)| +  V_\s(z).}
Similarly, using Lemma \ref{lem:linene damp} as well, we obtain
\EQ{
 \|\ga\|_{\cH\cap S(t)} \pt\lec e^{-\mu t}\|\ga(0)\|_\cH + \int_0^t e^{-\mu(t-s)} e^{\de(t-s)}V_\s(z(t))ds
  \pr\lec  e^{-\mu t}\|\ga(0)\|_\cH +  V_\s(z).}
Adding the above two estimates, we obtain 
\EQ{ \label{bd v sol}
 \|v\|_{\cH\cap S(t)} \lec e^{\nu^+(t-T)}|a^+(T)| +  e^{-\mu t}\|\ga(0)\|_\cH +  V_\s(z),}
which means that $v$ can essentially be grown only by $|a^+|$. 

In particular, if we have for some constant $B_s\in[1,\I)$
\EQ{
 \|v(0)\|_\cH \le B_sV_\s(z(0)), }
as well as $|a^+|\ll\|v\|_\cH$ such that \eqref{ga dom} works on $[0,T]$, then, using \eqref{slow decay V} as well, we obtain 
\EQ{
 \|v\|_{\cH_z} \lec \|\ga\|_\cH \lec e^{-\mu t}\|\ga(0)\|_\cH + V_\s(z) \lec B_s  V_\s(z),}
so that we can conclude $\|v\|_{\cH_z}^2<V_\s(z)$ on $[0,T]$ provided that
$B_s^2 \de_s>0$ is small enough. 

Using the above estimates, we may now combine those two dynamics in the regions \eqref{region 0} and \eqref{region 1}. Let $u$ be a solution on an interval $[0,T]$ satisfying 
\EQ{
 B_1|a^+|^2 \le \sN(\vec u,z) \le \de_s^2,}
with the parameters $B_1,\de_s,\de_r$ satisfying 
\EQ{ \label{cond param 2}
 0<\de_r\le\de_r^*, \pq \max(2\mm,\de_r^{-1})\le B_1, \pq 0<\de_s \le \de_s^* \de_r^2,}
for some small constant $\de_s^*\in(0,1)$ to be determined. 
Define $T_s\in[0,T]$ by 
\EQ{
 T_s:=\inf\{t\in[0,T]\mid \eN(D_z)\ge \de_r\sN(\vec u,z)^{1/2}\}.} 
If the set is empty, then let $T_s:=T$. 
For $0\le t<T_s$, we have $\eN(D_z)<\de_r\sN(\vec u,z)^{1/2}$, so the radiation dominant dynamics \eqref{region rad} holds. 
If $T_s<T$, then we have at $t=T_s$
\EQ{
 V_\s(z) \gec \eN(D_z) \ge \de_r\sN(\vec u,z)^{1/2} \gec \de_r\|v\|_\cH,}
so that we can find some $B_s\lec \de_r^{-1}$ such that 
\EQ{
 \|v(T_s)\|_\cH \le B_s V_\s(z(T_s)).}
Then by the above argument, if $B_s^2\de_s>0$ is small enough then we have the soliton dominant dynamics \eqref{region sol} on $[T_s,T]$. The smallness condition amounts to choosing $\de_s^*>0$ small enough.

\subsection{Summary of the dynamics near repulsive $K$-solitons}
Gathering the above information, we obtain a full description of dynamics in a small neighborhood of any $K$-soliton of sufficient distance, under the soliton repulsivity condition \eqref{sol repul}. 

Now we fix the parameters except $\de_0,\de_2$, first by putting
\EQ{
 \de_1:=\de_1^*, \pq  \de_r=\de_r^*, \pq \de_s:=\sqrt{2}\de_0, \pq B_2:=B_1:=\max(2\mm,1/\de_r^*)=:B_1^*,}
where small positive constants $\de_1^*,\de_r^*,\mm$ were chosen at Section \ref{ss:inst order2}, Section \ref{ss:rad dom}, and \eqref{def mm}, respectively. 
Then the conditions from \eqref{cond param 1} and \eqref{cond param 2} are reduced to 
\EQ{ \label{param cond all}
 \pt \de_0\le \min(\de_E^\star,\tf12\mm^{-1/2}\de_1^*,\tf{1}{\sqrt{2}}\de_s^*(\de_r^*)^2), \pq \de_2\le \de_2^*/B_1^*,}
where small positive constants $\de_E^\star,\de_s^*,\de_2^*$ were chosen at \eqref{bd CE}, \eqref{cond param 2} and Section \ref{ss:inst-region2}, respectively. 
Let $\de_2^\star$ be the minimum of the right sides in \eqref{param cond all}.

\begin{theorem}  \label{thm:dyn near Ksol}
For any $N\in\N$, $\al\in(0,\I)$, $p\in(2,p^\star(N))$, $K\in\N$ and $C_V\in[1,\I)$,  
there exist $B_1^*\in[2\mm,\I)$, $\de_1^*,\de_2^\star,\de_r^*,c_E,c_X\in (0,1)$ such that the following hold. 
Let $\s\in\{\pm 1\}^K$ and let $u$ be a solution of \eqref{NLKG} in $\cH$ satisfying initially 
$\U\sN_0(\vec u(0)) \le |\de_2^\star|^2$. 
Let $T^*\in(0,\I]$ be the maximal existence time of $u$.  
Let $\de_2\in(0,\de_2^\star]$ and assume that the soliton repulsivity condition \eqref{sol repul} holds for the solution $u$ as long as $\U\sN_0(\vec u)\le B_1^*|\de_2|^2$. 
 
Then there are $T_1,T_2,T_3,T_s\in[0,T^*]$ and $z\in C^1([0,T_3);(\R^N)^K)$ such that $v:=\vec u-\vec Q_\Si\in\Y^\xc_\perp(z)$ and $\sN_0(\vec u,z)\sim \U\sN_0(\vec u)$ for $0\le t< T_3$, and the following properties. Let $a^+_k:=p^+(z_k)v$ and $\ga:=P^\xcu_\perp(z)v$. 
\begin{enumerate}
\item $0\le T_1 \le T_2 \le T_3 \le T^*$ and $0\le T_s \le T_2$. $T_s=T_2$ for $K=1$ and $T_s<\I$ for $K\ge 2$. 
Either $T_1=T_2=T_3=T^*=\I$ or $T_3<T^*$. $T_3$ is uniquely defined by 
\EQ{
 T_3 = \inf\{t\ge 0 \mid |a^+(t)|\ge\de_2\},}
where $T_3:=\I$ if the set is empty. If $T_3<\I$ then 
\EQ{ \label{energy dec}
 \U\sN_0(\vec u(0))\le c_E|\de_2|^2 \implies E(\vec u(T_3)) \le KE(\vec Q) - \tf{\al\nu^+}{4}|\de_2|^2.}
\item If $\sN_0(\vec u(t),\ti z)\le c_X|\ap^+(\ti z)\vec u(t)| \le c_X^2$ for some $t\in[0,T^*)$ and $\ti z\in(\R^N)^K$ then $T_1\le t$ and $|a^+(t)-\ap^+(\ti z)\vec u(t)| \le \tf14|\ap^+(\ti z)\vec u(t)|$. \label{est T1} 
\item For $0\le t<T_1$, we have $\de_1^*|a^+| \le \sN(\vec u,z)\le 2\sN(\vec u(0),z(0))$. \label{first stage} 
\item For $T_1\le t<T_2$, we have $B_1^*|a^+|^2 \le \sN(\vec u,z)\le\de_1^*|a^+|$ and \label{second stage}
\EQ{
 |(\p_t-\nu^+)a^+|\le\tf{\nu^+}{2}|a^+| \le \p_t|a^+|, \pq \sN(\vec u,z) \le 2\sN(\vec u(0),z(0)).}
\item For $T_2\le t<T_3$, we have $\sN(\vec u,z)/B_1^* \le |a^+|^2 \le \de_2^2$ and \label{last stage}
\EQ{
 \pt a^+_*:=e^{\nu^+(t-T_2)}a^+(T_2) \implies |a^+-a^+_*|+|\dot a^+-\dot a^+_*|/\nu^+ \le \tf13|a^+_*|, 
 \pr |z-z(T_2)| \lec B_1^*|a^+|^2, 
 \pq \|\ga\|_{\cH\cap S(t)} \lec e^{-\mu(t-T_2)}\|\ga(T_2)\|_\cH+B_1^*|a^+|^2.
}
\item For $0<t<T_s$, we have $\max(|a^+|,\eN(D_z)) \le \de_r^*\sN(\vec u,z)^{1/2}$ and \label{rad stage}
\EQ{
 \|v\|_{\cH\cap S(t)}\sim\|\ga\|_{\cH\cap S(t)} \lec e^{-\frac{\mu}{2} t}\|\ga(0)\|_\cH, \pq |z-z(0)| \lec \de_r^*\|\ga(0)\|_\cH.
}
\item For $T_s<t<T_2$, we have $\|v\|_{\cH_z}^2<V_\s(z)$ and \label{sol stage}
\EQ{
 \pt |a^+| \lec e^{\nu^+(t-T_2)}|a^+(T_2)| + \eN(D_z),
  \pr \|v\|_{\cH\cap S(t)} \sim \|\ga\|_{\cH\cap S(t)} \lec e^{\mu(T_s-t)}\|\ga(T_s)\|_\cH + \eN(D_z).
}
\end{enumerate}
\end{theorem}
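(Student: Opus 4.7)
The plan is to chain together the analyses of Sections 6.1--6.6 with the parameters fixed by \eqref{param cond all}. First, I would use Lemma \ref{lem:center} on the initial data $\vec u(0)$ to produce a center $z(0) \in (\R^N)^K$ satisfying $v(0) := \vec u(0) - \vec Q_\Si[z(0)] \in \Y^\xc_\perp(z(0))$ with $\sN_0(\vec u(0), z(0)) \sim \U\sN_0(\vec u(0))$, which is permitted since $\U\sN_0(\vec u(0)) \le |\de_2^\star|^2$ is sufficiently small. Then the modulation ODE \eqref{eq z} propagates $z(t)$ forward, and remains well-posed together with the orthogonality condition as long as $\|v\|_\cH + \eN(D_z)$ stays small. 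Setting
\[
 T_3 := \inf\{t \ge 0 \mid |a^+(t)| \ge \de_2\},
\]
(with $T_3 := \I$ if the set is empty), the soliton-repulsivity hypothesis and the control \eqref{bd CE} of $\sN$ by its components keep $\U\sN_0(\vec u) \le B_1^* |\de_2|^2$ on $[0, T_3)$, so the modulation is defined on the whole of $[0, T_3)$ and $\sN_0(\vec u, z) \sim \U\sN_0(\vec u)$ there.

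Next, define the two intermediate transition times
\[
 T_1 := \inf\{t \in [0, T_3) \mid \sN(\vec u, z) \le \de_1^* |a^+|\},
 \quad
 T_2 := \inf\{t \in [T_1, T_3) \mid \sN(\vec u, z) \le B_1^* |a^+|^2\},
\]
with each set to $T_3$ when empty. On $[0, T_1)$, the inequality $\sN(\vec u, z) > \de_1^* |a^+| \ge 2\de_0 \mm^{1/2} |a^+|$ holds with the choice of parameters, so Section 6.1 applies and yields item (\ref{first stage}). On $[T_1, T_2)$, the solution lies in region \eqref{region 1}; the monotonicity \eqref{E/a mono} preserves the right inequality, \eqref{eq a reg1}--\eqref{grw a reg1} give the growth of $|a^+|$, and the non-increasing energy forces $T_2 < T_3$; this produces item (\ref{second stage}). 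On $[T_2, T_3)$, the solution lies in region \eqref{region 2}; the Duhamel argument leading to \eqref{grw a reg2} together with the damped estimate of Lemma \ref{lem:linene damp} applied to $\ga$ yields item (\ref{last stage}). The energy drop at $t = T_3$ in \eqref{energy dec} comes directly from \eqref{energy eject}, using $|a^+(T_3)| = \de_2$ and choosing $c_E$ small enough so that $(2c_E)^{-2} \sN(\vec u(0), z(0)) \le \tf12 |\de_2|^2$.

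For items (\ref{rad stage}) and (\ref{sol stage}), define
\[
 T_s := \inf\{t \in [0, T_2) \mid \eN(D_z(t)) \ge \de_r^* \sN(\vec u, z)^{1/2}\},
\]
with $T_s := T_2$ if the set is empty (automatic for $K = 1$, where $\eN(D_z)$ and $V_\s$ are trivial and the soliton-dominant regime is vacuous). On $[0, T_s)$ all components satisfy \eqref{region rad} since $|a^+| \le \sN^{1/2}/B_1^{*\,1/2} \le \de_r^* \sN^{1/2}$ from the definition of $T_2$ combined with $B_1^* \ge 1/\de_r^*$; the bootstrap in Section \ref{ss:rad dom} with Lemma \ref{lem:linene damp} then gives the decay of $\|\ga\|_{\cH \cap S(t)}$ and near-stationarity of $z$. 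If $T_s < T_2$ then at $t = T_s$ one has $\|v(T_s)\|_\cH \lec \de_r^{*\,-1} V_\s(z(T_s))$, so some $B_s \lec \de_r^{*\,-1}$ places us in the hypothesis of Section \ref{ss:soliton dynamics}; the estimate \eqref{bd v sol} together with \eqref{slow decay V} yields the conclusions of item (\ref{sol stage}). For $K \ge 2$, the exponential decay of $\|\ga\|_\cH$ combined with the essentially constant $\eN(D_z)$ in the radiation regime forces $T_s < \I$.

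Finally, item (\ref{est T1}) follows from Lemma \ref{lem:center}: the orthogonality-enforcing center $\ti{\ti z}$ obtained from $\ti z$ is unique in a neighborhood, so if $t < T_1$ were the case then $\sN(\vec u(t), z(t)) > \de_1^* |a^+(t)|$, contradicting the closeness estimates \eqref{est center change} applied between $z(t)$ and $\ti z$. The smallness constant $c_X$ is dictated by these error bounds, and yields the $\tf14$ factor for the unstable mode comparison. The main delicate point in the whole argument is matching up the monotonicity conclusions of the three successive regions at $T_1$ and $T_2$ so that none of the bootstrap bounds are lost in concatenation --- and verifying that under the fixed choice of parameters the soliton repulsivity assumption \eqref{sol repul} is indeed available throughout the full range $\U\sN_0(\vec u) \le B_1^* |\de_2|^2$, which is exactly the range traversed on $[0, T_3)$.
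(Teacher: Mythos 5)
Your proposal is correct and follows essentially the same route as the paper: Theorem \ref{thm:dyn near Ksol} is proved there precisely by defining $T_1,T_2,T_3,T_s$ via the same infima and concatenating the analyses of the regions \eqref{region 0}, \eqref{region 1}, \eqref{region 2}, \eqref{region rad}, \eqref{region sol} under the parameter choices \eqref{param cond all}, with \eqref{energy dec} read off from \eqref{energy eject} and item (2) from \eqref{est center change}. The only quibble is the same one implicit in the paper: to get $|a^+|\le\de_r^*\sN^{1/2}$ on $[0,T_s)$ from $\sN\ge B_1^*|a^+|^2$ one really wants $B_1^*\ge(\de_r^*)^{-2}$ rather than $B_1^*\ge(\de_r^*)^{-1}$, a harmless adjustment of constants.
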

The order between $T_1$ and $T_s$ depends on the solution $u$. 
Also note that some of the time intervals may well be empty, depending on $u$. 
The energy decrease \eqref{energy dec} follows from $|a^+(T_3)|\ge \de_2$ and \eqref{energy eject} (including the case of $T_3=0$, which is easier).

\section{Decay of the even part around $2$-solitons} \label{sect:even decay}
In the soliton dominant dynamics (Section \ref{ss:soliton dynamics}), the slow decay in time, namely the last term in \eqref{bd v sol}, can not be improved in general for $K\ge 2$. 
In order to construct the manifolds of asymptotic $1$-solitons around the $2$-soliton manifold, 
 we need a more refined estimate, because $\eN(D_z)$ is not integrable in time. 
The point is that in the case of 2-solitons with the opposite sign, the soliton interaction is essentially odd in space, so that we have a better decay on the even symmetric part. 

In the setting of Section \ref{ss:soliton dynamics}, let $K=2$ with $\s_1\s_2=-1$. Then we have not only $V_\s(z) = \eN_0(|z_1-z_2|) \sim \eN(D_z)$ 
but also 
\EQ{
 |\na_zV_\s(z)| = \sqrt{2}|\eN_0'(|z_1-z_2|)| \sim \eN(D_z)}
for $D_z=|z_1-z_2|\gg 1$. Then the equation \eqref{eq V} on $V_\s(z)$ is simplified 
\EQ{
 \p_t V_\s(z) \sim -|\na_zV_\s(z)|^2 \sim - V_\s(z)^2,}
so that we can estimate the evolution of distance as 
\EQ{ \label{dyn V}
 \eN(D_z) \sim V_\s(z) \sim [\eN(D_z(0))^{-1}+t]^{-1}.}
In the setting of Theorem \ref{thm:dyn near Ksol}, we obtain 
\EQ{ \label{Dz 2sol}
 T_s<t<T_2 \implies \eN(D_z) \sim [\eN(D_z(0))^{-1}+t-T_s]^{-1} \lec [\U\sN_0(\vec u(0))^{-1}+t-T_s]^{-1}.}

On the other hand, the barycenter of solitons 
\EQ{
 \ba{z}:=\tf 1K\sum_{k=1}^K z_k}
is not affected by the potential force of $V_\s$. Indeed, we have 
\EQ{
 \pt C^1_\om \sum_{k=1}^K \s_k p^j(z_k)J\vec N^0(v) = \sum_{k=1}^K \LR{f(Q_\Si)-f_\Si|\p_jQ_k}
  \pr=\LR{f(Q_\Si)|\p_jQ_\Si}-\sum_{k,l}\s_k\s_l\LR{f(Q)|\p_jQ(x-z_k+z_l)}=0,}
where the second term vanishes by the odd symmetry for $z_l-z_k$, cf.~\eqref{f Q na}. 
Hence \eqref{eq z} and \eqref{bd v sol} imply
\EQ{
 |\p_t \ba{z}| \pt\lec \eN(D_z)^2 + \|v\|_\cH^2
 \pr\lec \eN(D_z)^2 + e^{2\nu^+(t-T)}|a^+(T)|^2 +  e^{-2\mu t}\|\ga(0)\|_\cH^2.} 
Integrating it from $t=T$, using the above estimate \eqref{dyn V}, we obtain 
\EQ{ \label{sym z}
 |\ba{z}(t)-\ba{z}(T)| \lec \eN(D_z) + |a^+(T)|^2 + e^{-2\mu t}\|\ga(0)\|_\cH^2 \lec \eN(D_z),}
since $|a^+|^2+\|\ga\|_\cH^2\lec\|v\|_\cH^2$ for $0<t<T$, and $\eN(D_z)$ is decaying slower than $e^{-2\mu t}$. 
In the setting of Theorem \ref{thm:dyn near Ksol} with $K=2$ and $\s_1\s_2=-1$, the above estimate becomes 
\EQ{ \label{sym z12}
 T_s<t<T_2 \implies |z_1(t)+z_2(t)-z_1(T_2)-z_2(T_2)| \lec \eN(D_z).}

For the unstable modes, the leading term is extracted from the equation \eqref{eq a+ est}
\EQ{
 a^{+,*}_k := (C^+_\om)^{-1}\sum_{l\not=k}\int_T^t e^{\nu^+(t-s)}\s_l\eN_+(|z_l-z_k|)ds,}
so that we can bound the remainder as
\EQ{ \label{a+ prof}
 \pt a^+_k - a^{+,*}_k = e^{\nu^+(t-T)}a^+_k(T) 
 + \int_T^t e^{\nu^+(t-s)}O(\eN(D_z)^{p_1}+\|v\|_\cH^2)ds,
 \pr |a^+-a^{+,*}| \lec e^{\nu^+(t-T)}|a^+(T)| + e^{-2\mu t}\|\ga(0)\|_\cH^2+\eN(D_z)^{p_1},}
which is small in $(L^1\cap L^\I)_t(0,T)$. 

For the above estimates, it is enough to assume the stronger repulsivitiy condition
\EQ{ \label{repul strong}
 V_\s(z) \sim |\na_z V_\s(z)| \sim \eN(D_z),}
for general $K\ge 2$ and $\s$, 
but the following argument is special for the repulsive 2-solitons $K=2$ and $\s_1\s_2=-1$. 
First observe by the symmetry 
\EQ{ \label{a+ odd}
 a^{+,*}_1 = - a^{+,*}_2.}
Next for the radiation part $\ga$, let $\sI$ be the spatial inversion operator with respect to the two soliton positions at $t=T$, namely
\EQ{
 \sI\fy(x) := \fy(-x+z_1(T)+z_2(T)).}
Then using $\s_1\s_2=-1$ and $Q(x)=Q(-x)$, we obtain 
\EQ{
 \sI Q_\Si \pt= -\sum_{k=1}^2 \s_k Q(x-z_k+z_1+z_2-z_1(T)-z_2(T))
 \pr= -Q_\Si + O(\eN(D_z)|Q|_\Si), }
where $|Q|_\Si:=\sum_k|Q_k|$, and the last step uses \eqref{sym z12} and $|\na Q|\lec Q$. Similarly we have 
\EQ{
 \pt \sI N^0 = f(\sI Q_\Si)-\sum_{k=1}^2 f(\sI Q_k) = -N^0 + O(\eN(D_z)|Q|_\Si),
 \pr \sI f'(Q_\Si) = f'(\sI Q_\Si) = f'(Q_\Si) + O(\eN(D_z)|Q|_\Si),
 \pr \|[\sI ,P_\perp^\xcu(z)]\|_{\op(\cH)} \lec |z_1+z_2-z_1(T)-z_2(T)| \lec \eN(D_z).}
Hence the equation \eqref{eq ga} of $\ga$ is inverted to 
\EQ{
 \pt (\p_t-J\sL_\Si^\al)\sI \ga = \sI (\p_t-J\sL_\Si^\al)\ga + [\sI,\smat{0 & 0 \\ f'(Q_\Si) & 0}]\ga
 \pr= P_\perp^\xcu(z)J\vec N^0 - \sI P_\perp^\xcu(z)J\vec N^1_\Si(v) + O(\eN(D_z)^2+\|v\|_\cH^2), }
where $O(\cdot)$ terms are bounded in $\cH$. 
In other words, the inversion flips the sign of the leading term. Hence the symmetric part satisfies the better equation 
\EQ{
 (\p_t-J\sL_\Si^\al)(\ga+\sI \ga)
 = -(I+\sI )P_\perp^\xcu(z)J\vec N^1_\Si(v) + O(\eN(D_z)^2+\|v\|_\cH^2)}
in $\cH$. Then by Lemma \ref{lem:linene damp} (the linear version, using the Strichartz bound on $\ga$), and using \eqref{bd v sol}, we obtain 
\EQ{
 \|\ga+\sI \ga\|_\cH \pt\lec e^{-\mu t}\|\ga(0)+\sI \ga(0)\|_\cH 
 \prq+ \int_0^t e^{-\mu(t-s)}(\eN(D_z)^2+e^{2\nu^+(s-T)}|a^+(T)|^2+e^{-2\mu s}\|\ga(0)\|_\cH^2)ds
 \pr \lec e^{-\mu t}\|\ga(0)\|_\cH + \eN(D_z)^2 + e^{2\nu^+(t-T)}|a^+(T)|^2,} 
which is small in $(L^1_t\cap L^\I_t)(0,T)$, since $\eN(D_z)\sim(\eN(D_{z(0)})^{-1}+t)^{-1}$ and $\eN(D_{z(0)})\lec\de_s^2$. 

For the whole remainder, the radial symmetry of $Y^+$, \eqref{a+ prof} and \eqref{a+ odd} imply 
\EQ{
 v+\sI v \pt= \ga+\sI\ga + (a^+_1+a^+_2)(Y^+_1+Y^+_2) + O(|a^+|\eN(D_z))
 \pr=\ga+\sI\ga+ O(e^{\nu^+(t-T)}|a^+(T)|+e^{-2\mu t}\|\ga(0)\|_\cH^2+\eN(D_z)^{p_1}),}
where the error terms are bounded in $\cH$. Hence 
\EQ{
 \|v+\sI v\|_\cH \lec e^{\nu^+(t-T)}|a^+(T)| + e^{-\mu t}\|\ga(0)\|_\cH + \eN(D_z)^{p_1},}
which is also small in $(L^1_t\cap L^\I_t)(0,T)$, since $p_1>1$. 

Under the assumption of Theorem \ref{thm:dyn near Ksol} (with $K=2$ and $\s_1\s_2=-1$), the above estimates become
\EQ{ \label{even v bd}
 \pt\|\ga+\sI\ga\|_\cH \lec e^{\mu(T_s-t)}\|\ga(T_s)\|_\cH + \eN(D_z)^2 + e^{2\nu^+(t-T_2)}|a^+(T_2)|^2,
 \pr \|v+\sI v\|_\cH \lec e^{\mu(T_s-t)}\|\ga(0)\|_\cH  + \eN(D_z)^{p_1} + e^{\nu^+(t-T_2)}|a^+(T_2)|,}
for $T_s<t<T_2$, which are uniformly small in $(L^1\cap L^\I)_t(T_s,T_2)$, while 
\EQ{
 0<t<T_s \implies \eN(D_z) \lec \|\ga\|_\cH \sim \|v\|_\cH \lec e^{-\frac{\mu}{2}t}\|\ga(0)\|_\cH}
is also small in $(L^1\cap L^\I)_t(0,T_s)$. 

\section{Convergence to multi-soliton} \label{ss:conv Ksol}
In the conclusion of Theorem \ref{thm:dyn near Ksol}-(1), we have either $T_1=T_2=T_3=T^*=\I$ or $T_3<T^*$. 
In the latter case, if $\U\sN_0(\vec u(0))\le c_E|\de_2^\star|^2$ then  \eqref{energy dec} implies that $u$ cannot be an asymptotic $K$-soliton. 
In the former case, we have $\sN_0(\vec u,z)\le\U\sN_0(\vec u(0))\ll 1$ for all $t\ge 0$. 
Moreover in Section \ref{sect:even decay}, we have seen that the strong repulsivity condition \eqref{repul strong} implies that $D_z\to \I$ as $t\to\I$. 
Now we prove that such a solution is indeed an asymptotic $K$-soliton.
\begin{lemma} \label{lem:conv Ksol}
For any $N\in\N$, $\al\in(0,\I)$, $p\in(1,p^\star(N))$, there is $\de_A\in(0,1)$ such that if $u$ is a global solution of \eqref{NLKG} in $\cH$ satisfying 
\EQ{
 \limsup_{t\to\I}\sN_0(\vec u(t),z(t))\le\de_A, \pq \lim_{t\to\I} D_{z(t)}=\I,}
for some $K\in\N$, $\s\in\{\pm 1\}^K$ and $z\in C^0([0,\I);(\R^N)^K)$, then $\sN_0(\vec u(t),z(t))\to 0$ as $t\to\I$. 
\end{lemma}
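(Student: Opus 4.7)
The plan is to combine the monotone energy decay \eqref{Edecay} with Feireisl's sequential soliton resolution \eqref{conv seq}, then use a spectral rigidity at $Q$ to identify the limiting profiles as ground states, and finally promote sequential convergence to full-time convergence by contradiction.

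First, \eqref{Edecay} ensures $E_\I:=\lim_{t\to\I}E(\vec u(t))$ exists. Applying \eqref{conv seq} to the global solution $u$ produces $t_n\to\I$, stationary solutions $\phi_1\etc\phi_{K'}$ of $-\De\phi+\phi=f(\phi)$, and separating centers $c_{j,n}$ with $\vec u(t_n)=\sum_j\vec{\phi}_j(\cdot-c_{j,n})+o(1)$ in $\cH$. Separation of the profiles and the variational characterisation \eqref{gsedef} give $E_\I=\sum_j E(\vec{\phi}_j)\ge K'E(\vec Q)$. Meanwhile, the hypothesis together with $D_{z(t_n)}\to\I$ yields $|E(\vec u(t_n))-KE(\vec Q)|\lec\de_A^{1/2}$, so $|E_\I-KE(\vec Q)|\lec\de_A^{1/2}$ and hence $K'\le K$ for $\de_A$ small. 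Translating by $-c_{j,n}$ and extracting weak $\cH$-limits, and using that separation of the $z_k(t_n)$ allows at most one of the shifts $z_k(t_n)-c_{j,n}$ to stay bounded along a subsequence, we obtain for each $j$ an index $\kappa(j)\in\{1\etc K\}$ and a bounded shift $y_j$ with $\|\vec{\phi}_j-\s_{\kappa(j)}\vec Q(\cdot-y_j)\|_\cH\lec\de_A^{1/2}$.

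Next, the spectral structure of $\cL$ (kernel $\Span\{\na Q\}$ from translations, one negative eigenfunction, strictly positive complement via \eqref{L ene}) supplies a local-uniqueness statement: the only stationary solutions of $-\De\phi+\phi=f(\phi)$ lying within $\cH$-distance $O(\de_A^{1/2})$ of a translate of $\pm Q$ are translates of $\pm Q$ themselves. Indeed, writing $\phi_j=\s_{\kappa(j)}Q(\cdot-y_j)+\psi_j$ and linearising gives $\cL\psi_j=O(\|\psi_j\|_{H^1}^2)$; decomposing $\psi_j$ along the kernel, negative, and positive subspaces of $\cL$, and absorbing the kernel component into $y_j$, forces $\psi_j\equiv 0$ for $\de_A$ small. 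Hence $\vec{\phi}_j=\s_{\kappa(j)}\vec Q(\cdot-y_j)$ exactly; a symmetric pairing makes $\kappa$ bijective so $K'=K$, and the uniqueness in Lemma \ref{lem:center} identifies $c_{j,n}+y_j$ with $z_{\kappa^{-1}(j)}(t_n)+o(1)$. Therefore $\sN_0(\vec u(t_n),z(t_n))\to 0$ along the Feireisl sequence $t_n$.

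To upgrade this to $\lim_{t\to\I}\sN_0(\vec u,z)=0$, suppose otherwise; then some $\e>0$ and $s_n\to\I$ satisfy $\sN_0(\vec u(s_n),z(s_n))\ge\e$. The hypothesis keeps $\vec u(s_n)$ bounded in $\cH$ with $E(\vec u(s_n))\to E_\I$, and the compactness arguments underlying \eqref{conv seq} apply to any such bounded sequence, yielding a sub-subsequence along which $\vec u(s_{n_k})$ admits a soliton decomposition. Repeating the matching and rigidity on it produces $\sN_0(\vec u(s_{n_k}),z(s_{n_k}))\to 0$, contradicting the lower bound. The principal difficulty is the rigidity step, promoting $\cH$-proximity of $\vec{\phi}_j$ to a translate of $\s_{\kappa(j)}\vec Q$ into exact equality; this rests on Fredholm theory for $\cL$ on $\{\na Q\}^\perp$, with the negative eigenfunction absorbing the codimension-one unstable direction and strict positivity on the remainder supplying an implicit-function theorem for stationary solutions modulo translations.
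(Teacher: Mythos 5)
Your overall strategy—Feireisl's sequential resolution plus a rigidity/identification step, then an upgrade to the full limit—is genuinely different from the paper's, but the upgrade step as written has a real gap. You assert that "the compactness arguments underlying \eqref{conv seq} apply to any such bounded sequence, yielding a sub-subsequence along which $\vec u(s_{n_k})$ admits a soliton decomposition" with an $o(1)$ remainder in $\cH$. That is precisely the statement being proved, not something concentration compactness gives for free: Feireisl's theorem is only sequential because the times $t_n$ are \emph{chosen} (so that, e.g., $\|\dot u(t_n)\|_{L^2}\to0$ and the Nehari functional of the remainder can be controlled); along an arbitrary sequence $s_n$ the profile decomposition leaves a remainder that vanishes weakly (and in $L^{p+1}$) but a priori not in $\cH$. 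Your argument never excludes a weakly null remainder $\ga$ with $\liminf\|\ga(s_n)\|_\cH>0$. This is exactly the point where the paper does its real work: it shows that the limit $\ka=\lim_{t\to\I}\tfrac12\|\ga(t)\|_\cH^2=\lim E(\vec u)-KE(\vec Q)$ exists by weak convergence and the energy expansion, and then rules out $\ka>0$ by computing $K_0(u(t))=\|\ga_1(t)\|_{H^1}^2+o(1)\sim\ka$ for all large $t$ and feeding this into $\p_t\cP(\vec u)=\|\dot u\|_{L^2}^2-K_0(u)$, which would force $\cP(\vec u)\to-\I$ and contradict boundedness in $\cH$.

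Your first part, if completed, does give $E_\I=K E(\vec Q)$ (hence $\ka=0$), and from there the correct way to finish is not to re-run compactness on $s_n$ but to combine $E(\vec u(t))\to KE(\vec Q)$ with the expansion \eqref{E exp 1}, the coercivity \eqref{sL equiv} of $\sL_\Si$ modulo the finitely many projections onto $\phi(x-z_k)$ and $\na Q(x-z_k)$, and the weak convergence $\vec u(t,\cdot+z_k(t))\weakto\s_k\vec Q$ (which kills those projections). With that replacement your route closes. Two further caveats: (i) Feireisl's result is cited in the paper only "under some restriction on $p$", whereas the lemma is claimed for all $p\in(1,p^\star(N))$; the paper's proof is self-contained and avoids this, so your reliance on \eqref{conv seq} narrows the admissible range of $p$. (ii) Your rigidity step (local uniqueness of stationary solutions near $\pm Q$ via invertibility of $\cL$ on $\{\na Q\}^\perp$) is a workable alternative to the paper's shorter variational argument, which instead notes that a sign-changing stationary solution satisfies $E(u_\I)=\sum_\pm E(\max(\pm u_\I,0))>2E(Q)$, contradicting $\cH$-proximity to $\s_k Q$.
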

\begin{remark}
The assumption $D_z\to\I$ is not needed in the one-dimensional case $N=1$. 
For $N\ge 2$, however, the conclusion becomes false in general without that condition, because by \cite{MPW} there is a sequence of stationary solutions of \eqref{NLKG} close to $K$-solitons with $K\to\I$, $D_z\to\I$ and $\sN_0(\vec u,z)\to 0$. 
\end{remark}
\begin{proof}
For each $k=1\etc K$ and any sequence $t_n\to\I$, let 
\EQ{
 u_n(t,x) := u(t+t_n,x+z_k(t_n)).} 
Then $\{u_n\}$ is a sequence of global solutions of \eqref{NLKG} that is uniformly bounded in $\cH$. 
Using the energy-subcriticality, we may extract a subsequence, still denoted by $u_n$, convergent in $C([0,\I);\weak{\cH})$. Let $u_\I\in C([0,\I);\weak{\cH})$ be its limit.  Since $E(\vec u)$ is bounded below, \eqref{Edecay} implies that $\dot u\in L^2_{t,x}((0,\I)\times\R^N)$, hence 
$\dot u_n\to 0 \IN{L^2_{t,x}((0,\I)\times\R^N)}$. 
Hence $u_\I(t,x)=u_\I(x)\in H^1(\R^N)$ is a stationary solution of \eqref{NLKG}. 

Since $\|v\|_\cH\lec\de_A<1$ and $D_z\to\I$, the weak convergence implies 
\EQ{ \label{limit closeness}
 \|u_\I - \s_k Q\|_{H^1(\R^N)} \lec \de_A.}
Since $Q$ is an isolated stationary solution of \eqref{NLKG}, it implies that $u_\I=\s_k Q$ if $\de_A>0$ is small enough.  
In fact, if $u_\I$ is not a translation of the ground state, then its variational characterization implies that $E(u_\I) = \sum_{\pm} E(\max(\pm u_\I,0)) > 2E(Q)$, 
contradicting \eqref{limit closeness}. Hence $u_\I$ is also a ground state. Then \eqref{limit closeness} implies $u_\I=\s_k Q(x-c)$ for some $c\in\R^N$ with $|c|\lec \de_A$. 
The limit of the orthogonality implies 
\EQ{
 0=P^\xc(z_k)v(t_n) \to P^\xc(0)(\vec u_\I-\s_k \vec Q),}
hence $c=0$. Thus we obtain $\vec u_n \to \s_k \vec Q$ in $C([0,\I);\weak{\cH})$. 
Since the limit is unique, this convergence holds without the restriction to a subsequence. In other words, 
\EQ{ \label{weak conv at zk}
 \vec u(t,x+z_k(t)) \to \s_k\vec Q \IN{\weak{\cH}} \pq (t\to\I)}
for each $k=1\etc K$. 
Since this weak convergence eliminates all the eigenmodes, we deduce that 
\EQ{
 \ka:=\lim_{t\to\I} E(\vec u(t))-KE(\vec Q) = \lim_{t\to\I}\tf12\|\ga(t)\|_{\cH_{z(t)}}^2=\lim_{t\to\I}\tf12\|\ga(t)\|_\cH^2,}
where the last equality follows from the weak convergence around each $x=z_k$. 

Now suppose for contradiction that $\ka>0$ and let $\ga^k(t):=\ga_1(t,x+z_k(t))$. Then $\ga^k(t)\to 0$ in $\weak{H^1(\R^N)}$, and the Taylor expansion of the functional $K_0$ around $K_0(Q)=0$ yields, as $t\to\I$, 
\EQ{
 K_0(u(t)) \pt=K_0((Q_\Si + \ga_1)(t)) + o(1) 
 \pr= \sum_{k=1}^N \LR{K_0'(\s_k Q)|\ga^k(t)}+\|\ga_1(t)\|_{H^1}^2+O(\|\ga_1\|_{H^1}^{\min(p,3)})+o(1)
 \pr=\|\ga_1(t)\|_{H^1}^2+O(\|\ga_1\|_{H^1}^{\min(p,3)})+o(1),}
where $K_0'(Q)=-2\De Q+2Q-pQ^p$. Since $\|\dot u\|_{L^2_{t,x}(t>T)}\to 0$ as $T\to\I$, we deduce 
\EQ{
 [\cP(\vec u)]_{t}^{t+1} = \int_{t}^{t+1}[\|\dot u\|_{L^2}^2-K_0(u)]dt \sim -\ka,}
for $t\gg 1$, but it contradicts the boundedness of $\vec u$ in $\cH$. 
Hence $\ka=0$ and $\ga(t)\to 0$ strongly in $\cH$. 
Since all the eigenmodes are decaying because of \eqref{weak conv at zk}, we conclude that $v(t)\to 0$ in $\cH$. 
\end{proof}

\section{Difference estimate around multi-solitons} \label{sect:diff Ksol}
In order to investigate structure of solution sets around the $2$-solitons, we need to estimate the difference of two solutions both starting near the $2$-soliton but staying close only for finite time. 
In particular, we need growth estimate on the linearized equation around the solutions starting near $2$-solitons and converging to $1$-solitons. 
Then the main issue is whether the vector of unstable modes $a^+\in\R^2$ for the difference of solutions stays essentially in the same direction. Otherwise, the instability around one of the two solitons may be transferred to the other, making it hard to follow the whole instability mechanism. 

Since the soliton interactions are not integrable in time, namely $O(1/t)$, such a phenomenon of energy transfer is not immediately precluded. However, the cancellation \eqref{even v bd} in the even part implies that the $O(1/t)$ interactions do not cause such a transfer, even though they change the growth rate of the unstable modes. 

\subsection{Difference estimate near $K$-solitons}
Let $u$ be a solution of \eqref{NLKG} in $\cH$ close to the $K$-soliton $Q_\Si$ decomposed as in \eqref{def v} with the orthogonality and initial smallness
\EQ{
 v=\vec u-\vec Q_\Si \in \Y^\xc_\perp(z), \pq \|v(0)\|_\cH \le \de_0, \pq \eN(D_{z(0)})\le\de_D}
for some (unique) $z\in C^1([0,T];(\R^K)^N)$ on some time interval $[0,T]$, and $\de_0,\de_D>0$ satisfying $\de_0^2+\de_D\ll\de_E^\star$. 

Let $u^\ci$ be another solution starting near $u$
\EQ{
 \|\vec u^\ci(0)-\vec u(0)\|_\cH \ll \de_E^\star,}
and apply the same decomposition as $u$ to $u^\ci$, namely
\EQ{
 v^\ci:=\vec u^\ci-\vec Q_\Si, \pq a^{\x\ci}_k:=p^\xi(z_k)v^\ci, \pq \ga^\ci:=P^\xcu_\perp(z)v^\ci}
for $\x\in\xcu$, $k=1\etc K$ and $t\in[0,T]$. 
The difference is decomposed in the same way
\EQ{
 v':=v^\ci-v=u^\ci-u, \pq a^{\x '}_k:=a^{\x\ci}_k-a^\x_k, \pq \ga':=\ga^\ci-\ga.}
Note that $P^\xc(z_k)v^\ci\not=0$ in general. 
The equation for the difference is decomposed
\EQ{
 \pt(\p_t-J\sL_\Si^\al)v' = J(\vec N_\Si(v)-\vec N_\Si(v^\ci)),
 \pr(\p_t-\nu^\x)a^{\x'}_k = p^\x(z_k)\BR{J(\vec N_k(v)-\vec N_k(v^\ci))+\dot z_k\cdot\na v'},
 \pr(\p_t-J\sL_\Si^\al)\ga' = P^\xcu_\perp(z)\BR{J(\vec N_\Si(v)-\vec N_\Si(v^\ci))+R^\xcu_\perp(z)v'}.}
The nonlinear terms may be expanded as
\EQ{ \label{N exp diff}
 \pt N_\Si(v_1^\ci)-N_\Si(v_1) = [f'(Q_\Si+v_1)-f'(Q_\Si)]v_1' + O(W^{p-2}(v_1')^2),
 \pr N_k(v_1^\ci)-N_k(v_1) = [f'(Q_\Si+v_1)-f'(Q_k)]v_1' + O(W^{p-2}(v_1')^2),
 \pr f'(Q_\Si+v_1)-f'(Q_\Si) = f''(Q_\Si)v_1 + O(|Q|_\Si^{p-p_1-1}|v_1|^{p_1}),
 \pr f'(Q_\Si)-f'(Q_k) = O(|Q|_\Si^{p-2}\eN(D_z)),}
where $W:=|Q|_\Si+|v_1|+|v_1'|$. 
Then we may extract the linear part in the form 
\EQ{
 \pt (\p_t-\nu^\x)a^{\x'}_k = \sum_{l=1}^K \sum_{\y\in\xcu} \M_{k,l}^{\x,\y} a^{\y'}_l + \M_{k,0}^\x \ga' + \cQ_k^\x,
 \pr (\p_t-J\sL_\Si^\al)\ga' = \sum_{l=1}^K \sum_{\y\in\xcu} \M_{0,l}^\y a^{\y'}_l + \M_{0,0}\ga' + \cQ_0,}
where $\M_{k,l}^{\x,\y}$ are linear operators $\R\to\R$, $\cH\to\R$, $\R\to\cH$ or $\cH\to\cH$, depending on $t$ and $u$, whereas $\cQ_k^\x,\cQ_0$ gather the quadratic or higher terms in $v'$ or $(a',\ga')$. 

The above estimates \eqref{N exp diff} imply for $k\not=0$
\EQ{
 \|\M_{k,l}^{\x,\y}\|_{\op}+\|\M_{0,k}^\y\|_{\cH_*(t)} \lec \|v\|_\cH + \eN(D_z),}
where $\cH_*(t)$ is defined by 
\EQ{
 \cH_*(\ta):=L^1(\max(0,\ta-1),\ta;\op(\R\to\cH)),}
while $\M_{0,0}$ may be decomposed for some space-time function $h_0$ such that
\EQ{
 \pt \M_{0,0}\fy=\M_0^E\fy + \smat{0 \\ h_0\fy_1},
 \pq \|\M_0^E\|_{\op(\cH)}+\|h_0\|_{S^*(t)} \lec \|v\|_{\cH\cap S(t)}+\eN(D_z). }
In the simpler case $p\le\tf{N}{N-2}$, all the components are bounded in the operator norm as $\M_{k,l}^{\x,\y}$. 
For brevity, denote the time-dependent operator norm of $\M$ by  
\EQ{
 \dbr{\M}(t) := \sum_{k,l,\x,\y}\|\M_{k,l}^{\x,\y}\|_{\op} + \sum_{k,\x} \|\M_{0,k}^\x\|_{\cH_*(t)} + \|\M_0^E\|_{\op(\cH)} + \|h_0\|_{S^*(t)}.}
The nonlinear part is estimated similarly in the form 
\EQ{
 \pt |\cQ_k^\x| \lec \|v'\|_\cH^2, 
 \pq \cQ_0=\cQ_0^E+\smat{0 \\ g_0}, 
 \pq \|\cQ_0^E\|_{\cH} \lec \|v'\|_\cH^2,
 \pq |g_0|\lec |\ga'_1|^2+|\ga'_1|^p.}

If Theorem \ref{thm:dyn near Ksol} is applicable to $u$ for some $\de_2\in(\de_0,\de_2^\star]$, then the above estimate together with those by the theorem implies
\EQ{ \label{est M T23}
 \|\eN(D_z)\|_{(L^1\cap L^\I)_t(T_2,T_3)} + \|\dbr{\M}\|_{(L^1\cap L^\I)_t(T_2,T_3)}\lec \de_2,}
which becomes trivial if $T_2=T_3=T^*$. 
If in addition the stronger repulsivity condition \eqref{repul strong} holds, then 
\EQ{ \label{est M T02}
 \pt\|\eN(D_z)\|_{(L^2\cap L^\I)_t(0,T_2)}+ \|\dbr{\M}\|_{(L^2\cap L^\I)_t(0,T_2)} 
 \lec \de_D+\de_0.}
In the simple case $K=1$, we have no soliton interaction ($T_s=T_2$), so
\EQ{ \label{K=1 M bd}
 \|\dbr{\M}\|_{(L^1\cap L^\I)_t(0,T_2)} \lec \de_0.}

For the interaction of unstable modes, we have 
\EQ{
 \M^{+,+}_{k,l} \pt= \LR{f'(Q_\Si+v_1)-f'(Q_k)|\phi_k\phi_l}/C^+_\om +\LR{\dot z_k\cdot\na\phi_l|\phi_k}
 \prQ +O(\eN(D_z)(\eN(D_z)+\|v\|_\cH+|\dot z|)),}
where $\phi_k:=\phi(x-z_k)$, and the second line comes from the error term in Lemma \ref{lem:spec coord}. Hence
\EQ{ 
 \pt \M^{+,+}_{k,k} = \LR{f''(Q_k)(v_1+Q_\Si-Q_k)|\phi_k^2}/C^+_\om + O([\|v\|_\cH+\eN(D_z)]^{\min(p-1,2)}),}
and for $k\not=l$, using the better decay of $\phi$, 
\EQ{
 \M^{+,+}_{k,l} \pt= \LR{f'(Q_l)|\phi_k\phi_l}/C^+_\om + O((\|v\|_\cH+\eN(D_z)^{\min(p-2,1)})\eN(D_z)^{\LR{\nu_0}}) \pr=O(\eN(D_z)^{\LR{\nu_0}}). }
Hence, if Theorem \ref{thm:dyn near Ksol} is applicable as well as the strong repulsivity \eqref{repul strong}, then 
\EQ{ \label{M++ rem}
 \pt \|\M^{+,+}_{k,k}-\M^*_k\|_{(L^1\cap L^\I)_t(0,T_2)}
 \lec \de_0 + \de_D^{\min(1,p-2)},
 \pr \|\M^{+,+}_{k,l}\|_{(L^1\cap L^\I)_t(0,T_2)} \lec \de_D^{\LR{\nu_0}-1},}
where 
\EQ{ \label{def p3}
 \pt \M^*_k := \LR{f''(Q_k)(v_1+Q_\Si-Q_k)|\phi_k^2}/C^+_\om.}

\subsection{Difference estimate near $2$-solitons} \label{ss:diff 2sol}
Now let us specialize to the case of $K=2$ with $\s_1\s_2=-1$. Then, using the radial symmetry of $Q$ and $\phi$, as well as $f''(-u)=-f''(u)$, we obtain  
\EQ{
 C^+_\om(\M^*_1-\M^*_2) \pt= \LR{f''(Q)\phi^2|v_1(x+z_1)+v_1(x+z_2)}
 \pr=\LR{[f''(Q)\phi^2](x-z_1)|v_1+v_1(-x+z_1+z_2)},}
hence for $0<t<T_2$, using \eqref{sym z12} and \eqref{even v bd}
\EQ{
 |\M^*_1-\M^*_2|
 \pt\lec \|v+\sI v\|_\cH + |z_1+z_2-z_1(T_2)-z_2(T_2)|\|v\|_\cH 
 \pr\lec e^{-\frac{\mu}{2}t}\|\ga(0)\|_\cH  + \eN(D_z)^{p_1} + e^{\nu^+(t-T_2)}|a^+(T_2)|.}
Adding the other terms from \eqref{M++ rem}, we obtain 
\EQ{
 \|\M^{+,+}_{1,1}-\M^{+,+}_{2,2}\|_{(L^1\cap L^\I)_t(0,T_2)} \lec \de_0 + \de_D^{p_1-1}.}
Thus we obtain a uniform $L^1$ bound on 
\EQ{
 \pt \M^{(1)}(t):=\sum_{k\not=l}|\M^{+,+}_{k,k}-\M^{+,+}_{1,1}|+|\M^{+,+}_{k,l}|,}
namely (for $K=2$ with $\s_1\s_2=-1$)
\EQ{ \label{est M 2sol}
 \|\eN(D_z)\|_{(L^2\cap L^\I)(0,T_3)} + \|\M^{(1)}\|_{L^1(0,T_3)} + \|\dbr{\M}\|_{L^2_t(0,T_3)} \lec \de_D^{p_3}+\de_2,}
using $\de_0\le\de_2$, where the constant $p_3\in(0,1]$ is defined by 
\EQ{
 p_3:=\min(p_1-1,\LR{\nu_0}-1).}

The smallness of the $\ti M$ norm implies that the direction of $a^{+'}\in\R^2$ is essentially unchanged. 
To see that, let 
\EQ{ \label{def m}
 m(t):=\int_0^t [\nu^++\M^{+,+}_{1,1}(s)]ds, \pq \hat\de:=\de_D + \de_2 + \|\M^{(1)}\|_{L^1_t(0,T)} + \dbr{\M}_{L^\I_t(0,T)}.}
Then the equation becomes for $\x=+$ 
\EQ{
 e^m \p_t e^{-m}a^{+'}_k \pt= (\M^{+,+}_{k,k}-\M^{+,+}_{1,1})a^{+'}_k  
 \pn+\sum_{(l,\y)\not=(k,+)}\M_{k,l}^{+,\y}a^{\y'}_l + \M_{k,0}^+ \ga' + \cQ^+_k,}
where the nonlinear term is estimated for any $(k,\x)$ by 
\EQ{
 |\cQ^\x_k| \lec |a^{+'}_k|^2+(\de_D+\de_2)a^\neg_k+b,}
where $a^\neg_k$ and $b$ gather the other components as
\EQ{ \label{def B}
 a^\neg_k := \sum_{l\not=k}|a^{+'}_l|, \pq b:=\sum_k\sum_{\x\in\xc}|a^{\x'}_k|+\|\ga'\|_\cH.}
Thus we obtain 
\EQ{ \label{int est a+'}
 \pt |e^{-m}a^{+'}_k-a^{+'}_k(0)| 
 \pr\lec \int_0^t e^{-m}\BR{\M^{(1)}|a^{+}| + \dbr{\M} b+ |a^{+'}_k|^2+\hat\de |a^\neg_k|^2 + b^2 }(s)ds. }
On the center subspaces, just integrating the equation for $a^{\x'}_k$ yields 
\EQ{
 |a^{\xc'}| \lec |a^{\xc'}(0)|+\int_0^t \BR{\dbr{\M}\|v'\|_\cH+\|v'\|_\cH^2}(s)ds.}
For the radiation $\ga'$, Lemma \ref{lem:linene damp} yields 
\EQ{ 
 \|\ga'\|_{\cH} \pt\lec e^{-\mu t}\|\ga'(0)\|_\cH
  \pn+\int_0^t e^{-\mu(t-s)} \BR{\dbr{\M}\|v'\|_\cH+\|v'\|_\cH^2}(s)ds.}

Now we assume that for some $\ti\de\in(0,1)$ we have 
\EQ{
 \hat \de + \sum_k|a^{+'}|_k + b \le \ti\de}
on $t\in[0,T]$. Then the above two estimates may be combined into 
\EQ{ \label{est B}
 b \lec b(0) + \int_0^t [\ti\de b+\hat\de|a^{+'}|+|a^{+'}|^2](s)ds.}
For any $\ka>0$, let $b_\ka:=e^{-\ka m}b$. Then 
\EQ{ \label{Duh bka}
 b_\ka \lec e^{-\ka m}b(0) \pt+ \ti\de\int_0^t e^{\ka(m(s)-m(t))}b_\ka(s)ds
 \pn + e^{-\ka m}\int_0^t [\hat\de|a^{+'}|+|a^{+'}|^2](s)ds.}
We introduce further abbreviation 
\EQ{
 \pt \ck A_k(T):=\|e^{-m} a^{+'}_k\|_{L^\I_t(0,T)}, \pq A_k(T):=e^{m(T)}\ck A_k(T).}
The key point of the following argument is to show that $A_k\sim\|a^{+'}_k\|_{L^\I_t}$ under some initial assumption leading to exponential growth of $a^{+'}$ (but otherwise $A_k$ may be essentially bigger than $\|a^{+'}_k\|_{L^\I_t}$).

Since $\dot m\sim\nu^+$, using the Young inequality for convolution in the last inequality, we obtain for any $p\in[1,\I]$ 
\EQ{
 \|b_\ka\|_{L^p_t} \pt\lec \ka^{-1/p} b(0) + \ka^{-1}\ti\de\|b_\ka\|_{L^p_t}
 \pn+ \ka^{-1/p}\BR{\hat\de e^{(1-\ka)m}\ck A + e^{(2-\ka)m}\ck A^2}.}
If $\ka^{-1}\ti\de>0$ is small enough, then we obtain  
\EQ{ \label{est bka}
  \|b_\ka\|_{L^p_t} \pt\lec \ka^{-1/p}b(0)+ \ka^{-1/p}(\hat\de+A)e^{(1-\ka)m}\ck A.}
Using this in \eqref{int est a+'}, we obtain 
\EQ{
 \pt|e^{-m} a^{+'}_k-a^{+'}_k(0)| 
 \pr\lec \|\M^{(1)}\|_{L^1_t(0,T)}\ck A + A_k\ck A_k + \hat\de A \ck A + \|\dbr{\M}\|_{L^\I_t} \|b_1\|_{L^1_t} + \|b_{1/2}\|_{L^2_t}^2
 \pr\lec \hat\de\ck A + A_k\ck A_k + \hat\de A \ck A
 \pn + \hat\de [b(0) +\hat\de \ck A + e^m \ck A^2]
 \pn + [b(0)+ \hat\de e^{\frac 12m}\ck A + e^{\frac32 m}\ck A^2]^2
 \pr\lec \hat\de\ck A + A_k\ck A_k + \hat\de A \ck A + A^3 \ck A + \hat\de b(0) + b(0)^2.}
Hence, as long as $\hat\de+A^3$ remains small, we have 
\EQ{ \label{est ckA}
 \ck A \lec |a^{+'}(0)| + \hat\de b(0) + b(0)^2,}
and plugging this into the above yields 
\EQ{
  |e^{-m} a^{+'}_k-a^{+'}_k(0)|  \lec (\hat\de+A^3)|a^{+'}(0)| + A_k\ck A_k  + \hat\de b(0) + b(0)^2,}
and similarly for $\ck A_k$, 
\EQ{
 \pt \ck A_k \lec |a^{+'}_k(0)| + (\hat\de+A^3)|a^{+'}(0)| + \hat\de b(0) + b(0)^2, 
 \pr |e^{-m} a^{+'}_k-a^{+'}_k(0)|  \lec (\hat\de+A^3)|a^{+'}(0)| + A_k|a^{+'}_k(0)| + \hat\de b(0) + b(0)^2.}

Hence if there is some $\de\in(\hat\de,1)$ such that for a sufficiently small constant $\de^*>0$ 
\EQ{ \label{init dom ak-0}
 \de|a^{+'}(0)| + \hat\de b(0) + b(0)^2 \le \de^* |a^{+'}_k(0)|, }
then as long as $\hat\de+A^3\le\de\ll 1$, we have 
\EQ{ \label{ak' exp}
 \pt |e^{-m} a^{+'}_k - a^{+'}_k(0)| \lec \de^*|a^{+'}_k(0)|, \pq A_k \sim e^m|a^{+'}_k(0)| \sim |a^{+'}_k|,
 \pr |e^{-m} a^+-a^{+'}(0)| \lec  \de^*|a^{+'}(0)|, \pq A\sim e^m|a^{+'}(0)| \sim |a^{+'}|.}
Plugging this into \eqref{est bka} yields for $\ka\gec\ti\de$,
\EQ{ \label{bd on b}
 b = e^{\ka m}b_\ka \lec e^{\ka m}b(0) + (\hat\de+|a^{+'}|)|a^{+'}| }
for $\ka\gec\ti\de$. Using a fixed $\ka$ (e.g., $\ka=1/2$), we obtain 
\EQ{
 \de|a^{+'}| + \hat\de b + b^2 \lec \de^*|a^{+'}_k|.}
In other words, \eqref{init dom ak-0} is essentially preserved as long as \eqref{ak' exp} holds. 
The center-stable part is also bounded by using Lemma \ref{lem:spec coord}, 
\EQ{
 \|P^\xu_\perp(z)u'\|_\cH \lec \de_D|a^{+'}| + b \lec e^{\ka m}b(0) + (\hat\de+|a^{+'}|)|a^{+'}|,}
which is the same as the bound on $b$ in \eqref{bd on b}. 
The above condition \eqref{init dom ak-0} may be simplified to 
\EQ{ \label{init dom ak-1}
 \de\|\vec u'(0)\|_\cH + \|\vec u'(0)\|_\cH^2 \lec \de^*|a^{+'}_k(0)|.}
Moreover, this is not essentially affected by the modulation of centers in Lemma \ref{lem:center}, even if we do not have the orthogonality at $t=0$. In fact, if $\ti z\in(\R^N)^K$ is given by the lemma, then \eqref{est center change} yields 
\EQ{
 |a^{+'}_k(0)-p^+(\ti z_k)\vec u'(0)| \lec |z-\ti z|\|\vec u'(0)\|_\cH \lec \de\|\vec u'(0)\|_\cH.}
This difference on the right side of \eqref{init dom ak-1} is absorbed by the left if $\de^*$ is small enough.  

Combining this conclusion with the estimate on $\M$ in \eqref{est M T23},\eqref{est M T02} and \eqref{est M 2sol}, we obtain
\begin{lemma} \label{lem:diff 2sol}
For any $N\in\N$, $\al\in(0,\I)$ and $p\in(2,p^\star(N))$, 
there exist $\de_m^\star\in(0,1)$ and $C_m\in[1,\I)$ such that the following hold. 
Let $u,u^\ci$ be two solutions in $\cH$ of \eqref{NLKG} on a time interval $[0,T)$ with the difference $u':=u^\ci-u$, satisfying 
\EQ{
 \pt \sN_0(\vec u(0),\U{z}) \le \de, \pq \eN(D_{\U z})^{p_3} \le \de, 
 \pr \de\|\vec u'(0)\|_\cH + \|\vec u'(0)\|_\cH^2 \le 2\de_m^\star|p^+(\U z_k)\vec u'(0)|, \pq \|\vec u'(0)\|_\cH^3 <\de, }
for some $\s\in\{\pm 1\}^2$, $\U{z}\in(\R^N)^2$, $k\in\{1,2\}$ and $\de\in(0,\de_m^\star]$. 
Then for $0\le t<T$ and as long as 
\EQ{ \label{a priori cond u'}
 \sup_{0\le s\le t}\|\vec u'(s)\|_\cH^3 \le \de,}
we have
\EQ{ \label{diff grow}
 \pt |e^{-m}a^{+'}_k-a^{+'}_k(0)| \le \tf14|a^{+'}_k(0)|, \pq |e^{-m}a^{+'}-a^{+'}(0)| \le \tf14|a^{+'}(0)|, 
 \pr \de\|\vec u'\|_\cH+\|\vec u'\|_\cH^2 \le C_m\de_m^\star|a^{+'}_k|,
 \pr \|P^\xu_\perp(z)\vec u'\|_\cH \le C_me^{C_m \de^{1/3} m}\|\vec u'(0)\|_\cH + C_m(\de+|a^{+'}|)|a^{+'}|,}
where $z\in C^1([0,T);(\R^N)^2)$ is chosen to satisfy the orthogonality $\vec u-\vec Q_\Si\in\Y^\xc_\perp(z)$, $a^{+'}:=p^+(z)\vec u'$, and  
$m$ is defined by \eqref{def m}, satisfying $m(0)=0$ and $\dot m\sim\nu^+$. 
In the case of $\|\vec u'(0)\|_\cH\le\de$, the condition \eqref{a priori cond u'} may be replaced with 
\EQ{ \label{a priori cond a'}
 \sup_{0\le s\le t}|a^{+'}(s)|^3 \le \de.}
\end{lemma}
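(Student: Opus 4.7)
The plan is to formalize the bootstrap argument already sketched in Section \ref{ss:diff 2sol}, using Theorem \ref{thm:dyn near Ksol} to control the background solution $u$ and the even/odd cancellation of Section \ref{sect:even decay} to control the interaction matrix $\M$. The proof splits naturally into a setup step, an input step, and a bootstrap step.

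For the setup, I would first invoke Lemma \ref{lem:center} to produce $z(0)$ near $\U z$ such that $\vec u(0)-\vec Q_\Si[z(0)]\in\Y^\xc_\perp(z(0))$, and use the Lipschitz bound \eqref{est center change} to transfer the initial smallness hypothesis on $p^+(\U z_k)\vec u'(0)$ to $a^{+'}_k(0)=p^+(z_k(0))\vec u'(0)$; the quadratic-in-$\|\vec u'(0)\|_\cH$ deviation is absorbed into the left-hand side of \eqref{init dom ak-1} since $\de_m^\star$ is small. Then $z(t)$ is evolved by the modulation equation \eqref{eq z}. Next I would apply Theorem \ref{thm:dyn near Ksol} to $u$ with $\de_2:=\de_2^\star$: this yields the time decomposition $[0,T_1), [T_1,T_2), [T_2,T_3)$ and, crucially, the estimates on $\|v\|_\cH$, $\|\ga\|_{\cH\cap S(t)}$, $\eN(D_z)$, and $|\dot z|$ needed to turn the pointwise bounds on $\M_{k,l}^{\x,\y}$, $\M_{0,k}^\y$, $\M_0^E$, $h_0$ into the time-integrated estimate \eqref{est M 2sol}. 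Here the stronger repulsivity \eqref{repul strong} holds automatically for the 2-soliton with $\s_1\s_2=-1$, and the even-part decay \eqref{even v bd} is what makes $\M^{(1)}\in L^1_t$ (as opposed to merely $L^\I_t$).

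The bootstrap is the core step. Let $T'\in[0,T]$ be the maximal time on which \eqref{a priori cond u'} holds together with, say, $|e^{-m}a^{+'}_k-a^{+'}_k(0)|\le \tf12 |a^{+'}_k(0)|$. On $[0,T')$ the estimates \eqref{int est a+'}, \eqref{est B}, \eqref{Duh bka} are applicable, and the Young-convolution computation following \eqref{Duh bka} yields \eqref{est bka} for $\ka\gec\ti\de$. Feeding this back into \eqref{int est a+'} gives the bound $\ck A_k\lec |a^{+'}_k(0)|+(\hat\de+A^3)|a^{+'}(0)|+\hat\de b(0)+b(0)^2$ together with the sharper version without the $|a^{+'}_k(0)|$ term but with an extra $A_k\ck A_k$ on the right. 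Because the hypothesis \eqref{init dom ak-1} (derived from the assumption of the lemma via Step 1) forces $|a^{+'}(0)|\sim|a^{+'}_k(0)|$ up to a small multiplicative loss, and because $\hat\de \lec \de^{p_3}+\de_2 \lec \de_m^\star$ by \eqref{est M 2sol}, one absorbs the cross terms and concludes \eqref{ak' exp}, which is exactly the first two inequalities of \eqref{diff grow} with $\tf14$ in place of $\tf12$. The $b$-bound \eqref{bd on b} (used with a fixed $\ka$, e.g. $\ka=\tf12$, and the fact that $m(t)\sim\nu^+ t$ gives $e^{\ka m}\le e^{C_m \de^{1/3}m}$ once $\ka\sim\de^{1/3}$ is permitted) yields the last two lines of \eqref{diff grow}, using Lemma \ref{lem:spec coord} to pass from $(a^{\xc'},\ga')$ to $P^\xu_\perp(z)\vec u'$. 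This strict improvement over the maximal-time bounds forces $T'=T$ by continuity.

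The main obstacle is precisely the bookkeeping that ensures the bootstrap closes without losing powers of $\de_m^\star$: one must carry $\hat\de$, $A$, and $b(0)$ through the convolution estimates \eqref{est bka} simultaneously, and the delicate point is that only $\M^{(1)}\in L^1_t$ (not $\M^{(1)}\in L^\I_t$ with small norm) is available. It is this $L^1$ integrability, supplied by the even-part cancellation \eqref{est M 2sol} and ultimately by $\s_1\s_2=-1$, that prevents the direction $a^{+'}/|a^{+'}|$ from rotating under the $O(1/t)$ soliton interaction. The variant \eqref{a priori cond a'} in place of \eqref{a priori cond u'} follows because under $\|\vec u'(0)\|_\cH\le\de$, the estimate on $b$ together with \eqref{diff grow} bounds $\|\vec u'\|_\cH^3$ by $|a^{+'}|^3$ plus an initially-small term that is already controlled.
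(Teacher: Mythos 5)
Your proposal is correct and takes essentially the same route as the paper: the lemma is the summary statement of Sections \ref{sect:diff Ksol}--\ref{ss:diff 2sol}, and your setup (modulation via Lemma \ref{lem:center}, control of the background $u$ by Theorem \ref{thm:dyn near Ksol}, and the even-part cancellation of Section \ref{sect:even decay} giving $\|\M^{(1)}\|_{L^1_t}\lec \de_D^{p_3}+\de_2$) together with the bootstrap on $\ck A_k$ and $b_\ka$ via Young's convolution inequality reproduces the paper's argument, including the reduction of \eqref{a priori cond u'} to \eqref{a priori cond a'}. One small remark: the hypothesis does not give, and the paper does not use, $|a^{+'}(0)|\sim|a^{+'}_k(0)|$; the cross terms $(\hat\de+A^3)|a^{+'}(0)|+\hat\de b(0)+b(0)^2$ are absorbed directly by the initial dominance condition \eqref{init dom ak-0}, with $\ck A$ and $\ck A_k$ kept as separate quantities throughout.
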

In short, if the initial difference is essentially in the unstable direction, then it grows mainly in the unstable components, and its direction is not essentially changed in $a^{+'}\in\R^2$, as long as the difference remains small enough. 

The reduction of the condition \eqref{a priori cond u'} to \eqref{a priori cond a'} follows from the estimates in \eqref{diff grow}: if $\|\vec u'(0)\|_\cH\le\de\ll\de^{1/3}\sim\|\vec u'(t)\|_\cH$ then the upper bound on $P^\xu_\perp(z)\vec u'$ and the lower bound on $|a^{+'}|$ implies that $\|P^\xu_\perp(z)\vec u'(t)\|_\cH\ll|a^{+'}|\sim\de^{1/3}$.

\subsection{Difference estimate near 1-solitons} \label{ss:diff 1sol}
The case of $K=1$ is essentially known. 
Note that the above argument applies also to $K=1$, but it is much simpler in several places. In particular, the norm $\ti M$ does not contain the extra components of $\M^{+,+}$. Second, there is no soliton interaction or $T_s=T_2$, so each component of $\M$ is uniformly integrable in time. 
In particular, we may replace $e^{m}$ with $e^{\nu^+t}$. For later use, we state it in a simpler form. 
\begin{lemma} \label{lem:diff 1sol}
For any $N\in\N$, $\al\in(0,\I)$ and $p\in(2,p^\star(N))$, there exist $\de_m^\star\in(0,1)$ and $C_m\in[1,\I)$ such that the following hold. Let $u,u^\ci$ be two solutions in $\cH$ of \eqref{NLKG} on a time interval $[0,T)$ with the difference $u':=u^\ci-u$, satisfying 
\EQ{
 \pt \sN_0(\vec u(0),\U{z}) \le \de, \pq \de\|\vec u'(0)\|_\cH \le \de_m^\star|p^+(\U z)\vec u'(0)|, \pq \|\vec u'(0)\|_\cH \le\de, }
for some $\s\in\{\pm 1\}$, $\U{z}\in \R^N$ and $\de\in(0,\de_m^\star]$. 
Then for $0\le t<T$ and as long as \eqref{a priori cond a'} holds,
\EQ{
 \pt |e^{-\nu^+t}a^{+'}-a^{+'}(0)| \le \tf14|a^{+'}(0)|, 
 \pq \de\|\vec u'\|_\cH \le C_m\de_m^\star|a^{+'}|, 
 \pr \|P^\xu_\perp(z)\vec u'\|_\cH \le C_me^{C_m \de^{1/3} \nu^+ t}\|\vec u'(0)\|_\cH + C_m(\de+|a^{+'}|)|a^{+'}|,}
where $z\in C^1([0,T);\R^N)$ is chosen to satisfy the orthogonality $\vec u-\s\vec Q(x-z)\in\Y^\xc_\perp(z)$, and $a^{+'}:=p^+(z)\vec u'$. 
\end{lemma}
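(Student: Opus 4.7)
The plan is to follow the argument developed for Lemma \ref{lem:diff 2sol}, specialized to $K=1$, where the absence of soliton interactions produces substantial simplifications. The spectral decomposition of $u':=u^\ci-u$ into the unstable mode $a^{+'}$, the center modes $a^{\xi'}$ for $\xi\in\xc$, and the radiation $\ga'$ proceeds exactly as in Section \ref{sect:diff Ksol}; the linearized operator $\M$ and the quadratic remainder $\cQ$ take the same form, but with no $\eN(D_z)$ contribution and no off-diagonal $\M^{+,+}_{k,l}$ terms with $k\ne l$, so in particular $\M^{(1)}\equiv 0$.

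First I would invoke \eqref{K=1 M bd} to get $\|\dbr{\M}\|_{L^1_t\cap L^\I_t(0,T_2)}\lec\de$. In particular $\M^{+,+}_{1,1}\in L^1_t$ with norm $\lec\de$, hence with $m$ as in \eqref{def m},
\[
|m(t)-\nu^+ t|=\Bigl|\int_0^t \M^{+,+}_{1,1}(s)\,ds\Bigr|\lec\de,
\]
so $e^m$ and $e^{\nu^+ t}$ are comparable uniformly in $t$, up to a multiplicative factor $1+O(\de)$ that is absorbed into $C_m$ throughout. Because $\M^{(1)}\equiv 0$, the $L^1_t$ integrability obstacle that forced the even-symmetry analysis of Section \ref{sect:even decay} does not arise here: only the elementary portion of the Duhamel-and-bootstrap scheme of Section \ref{ss:diff 2sol} is needed.

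Setting $b:=\sum_{\xi\in\xc}|a^{\xi'}|+\|\ga'\|_\cH$ as before, the analogues of \eqref{int est a+'}, \eqref{Duh bka}, and \eqref{est bka} produce $\ck A\lec|a^{+'}(0)|+\de\,b(0)+b(0)^2$, and the initial hypothesis $\de\|\vec u'(0)\|_\cH\le\de_m^\star|a^{+'}(0)|$ dominates the $b(0)$ contributions by $\de_m^\star|a^{+'}(0)|$. Choosing $\de_m^\star$ small then yields $|e^{-m}a^{+'}-a^{+'}(0)|\le\tfrac18|a^{+'}(0)|$, and the comparability of $e^m$ with $e^{\nu^+ t}$ upgrades $\tfrac18$ to $\tfrac14$ at the price of shrinking $\de_m^\star$ further. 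Substituting the now-proved exponential growth of $|a^{+'}|$ into \eqref{bd on b} with $\ka=O(\de^{1/3})$ gives $b(t)\lec e^{\ka\nu^+ t}b(0)+(\de+|a^{+'}|)|a^{+'}|$, whence $\|P^\xu_\perp(z)\vec u'\|_\cH\lec e^{C_m\de^{1/3}\nu^+ t}\|\vec u'(0)\|_\cH+(\de+|a^{+'}|)|a^{+'}|$ via Lemma \ref{lem:spec coord} (there is no $\eN(D_z)|a^{+'}|$ correction since $K=1$). Finally $\de\|\vec u'\|_\cH\lec\de(|a^{+'}|+b)\lec C_m\de_m^\star|a^{+'}|$ closes the last announced estimate.

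The step needing some care is the reduction of the bootstrap hypothesis \eqref{a priori cond u'} to \eqref{a priori cond a'}: once the other conclusions are in hand, the bound $\|\vec u'\|_\cH\lec\de_m^\star|a^{+'}|/\de$, together with $\|\vec u'(0)\|_\cH\le\de$ and $|a^{+'}|\le\de^{1/3}$, forces $\|\vec u'\|_\cH\lec\de^{1/3}$, so the continuity argument closes under the weaker bootstrap. The small discrepancy between the orthogonalizing center $z(0)$ chosen for $u$ and the center $\U z$ from the hypothesis is handled by \eqref{est center change} exactly as at the end of Section \ref{ss:diff 2sol}: it contributes only $O(\de\|\vec u'(0)\|_\cH)$ to $a^{+'}(0)$, which is absorbed after further shrinking $\de_m^\star$.
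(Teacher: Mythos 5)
Your proposal follows the paper's own route: the paper proves this lemma precisely by specializing the $K$-soliton difference argument of Section \ref{sect:diff Ksol} to $K=1$, observing that in the absence of soliton interaction every component of $\M$ is uniformly integrable in time (cf.~\eqref{K=1 M bd} and \eqref{est M T23}), so that $|m(t)-\nu^+t|\lec\de$ and $e^{m}$ may be replaced by $e^{\nu^+t}$; your handling of the off-diagonal terms, the bootstrap for $b$, and the initial modulation of centers via \eqref{est center change} all match this. The one slip is in your final paragraph: the reduction of \eqref{a priori cond u'} to \eqref{a priori cond a'} does not follow from the second conclusion $\de\|\vec u'\|_\cH\le C_m\de_m^\star|a^{+'}|$, since with $|a^{+'}|\le\de^{1/3}$ this only yields $\|\vec u'\|_\cH\lec\de_m^\star\de^{-2/3}$, which is not $\lec\de^{1/3}$ when $\de\ll\de_m^\star$. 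The paper closes this step instead from the third conclusion: writing $\|\vec u'\|_\cH\lec|a^{+'}|+\|P^\xu_\perp(z)\vec u'\|_\cH$, the bound on $P^\xu_\perp(z)\vec u'$ together with $\|\vec u'(0)\|_\cH\le\de$ and the lower bound $|a^{+'}|\sim e^{\nu^+t}|a^{+'}(0)|$ shows that $\|P^\xu_\perp(z)\vec u'\|_\cH\ll|a^{+'}|\le\de^{1/3}$, so \eqref{a priori cond u'} cannot fail before \eqref{a priori cond a'} does. Since you have already established that third bound, the repair is immediate and does not affect the rest of the argument.
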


\section{Collapse of multi-solitons} \label{sect:collapse}
In this section, we investigate the behavior of solutions getting away from the small neighborhood of $K$-solitons with sufficient distance. Using the finite speed of propagation, together with uniform bounds on time needed for the decay and for the blow-up criterion, those solutions are investigated by superposition of the case of $1$-solitons. 
In particular, if one of the $K$-solitons collapses by the instability into the blow-up region, then the original solution also blows up. In other words, those solutions can survive the collapse only if the instability or each $a^+_k$ moves into the decay region or stays small. 

\subsection{Collapse of $1$-solitons} \label{ss:col-1}
We start with the $1$-soliton case, as it is the building block to describe the collapse of $K$-solitons. 

Let $\s\in\{\pm 1\}$ and let $u$ be a solution of \eqref{NLKG} on the maximal interval $[0,T^*)$ with 
\EQ{
 \U\sN_0(\vec u(0))^{1/2} \le \de \ll \min(\de_0^\star,\de_2^\star)}
so that we can apply Theorem \ref{thm:dyn near Ksol} with $K=1$ and $\de_2=\de_2^\star$, as well as \eqref{energy dec}. 

If $T_3=\I$ then Lemma \ref{lem:conv Ksol} implies that $u$ is an asymptotic $1$-soliton. 
Otherwise, $T_3<\I$ and $E(\vec u(T_3))<E(\vec Q)$ by \eqref{energy dec}, 
so the solution $u$ is below the ground state, with
\EQ{
 K_0(u) \pt= K_0(Q) + \s a^+\LR{K_0'(Q(x-z))|v} + o(a^+) 
  \pr= -\s (p-2) a^+\LR{Q^p|\phi} + o(a^+) \sim -\s a^+.}
If it is positive then $u$ is global and decaying, and if it is negative then $u$ blows up. 

For $0\le t<T_1$, we have $|a^+|\lec\de^2$. For $T_1\le t<T_3$, we have 
\EQ{ \label{inst grw 1sol} 
 \p_t a^+ \sim a^+, \pq e^{\frac12\nu^+(t-T_1)} \le a^+/a^+(T_1) \le e^{\frac32\nu^+(t-T_1)},}
and $\sign a^+$ is preserved. For $0\le t<T_2=T_s$, we have 
\EQ{
 \|v\|_\cH \lec e^{-\mu t/2}\|\ga(0)\|_\cH, \pq |z-z(0)| \lec \|\ga(0)\|_\cH,}
and for $T_2\le t<T_3$, 
\EQ{
 \|\ga\| \lec e^{-\mu(t-T_2)}\|\ga(T_2)\|_\cH + |a^+|^2, \pq |z-z(T_2)| \lec |a^+|^2.}
If $T_1>0$, then there is $t_0\in[T_1,T_3)$ such that
\EQ{
 |a^+(t_0)| \sim \de^2, \pq |t_0-T_3|\lec|\log(\de_2^\star/\de_0)| \lec |\log\de|.} 
Otherwise, we have $|a^+(0)|^2\gec\de^2$ and $T_3\lec|\log\de|$. 
On the other hand, Lemmas \ref{lemma2.2} and \ref{lem:bup destiny}, together with the uniform bound on $\vec u$ on $[0,T_3]$, imply
\EQ{
 \CAS{\s a^+(T_1)<0 \implies \|\vec u(t)\|_\cH \le Ce^{-c(t-T_3)_+} &(0\le t<T^*=\I),\\ 
  \s a^+(T_1)>0 \implies P(\vec u(t)) \ge e^{c(t-T_3)_+}-C &(0\le t<T^*<\I),} }
for some constants $c\in(0,1)$ and $C\in(1,\I)$ depending only on $N,\al,p$. 
Hence in both cases, we may find some $t_1\in(T_3,T^*)$ such that 
\EQ{
 |T_3-t_1|\lec |\log\de|, \pq \CAS{\s a^+(T_1)<0 \implies \|\vec u(t)\|_\cH \le \de^5 &(t_1\le t<\I),\\ 
  \s a^+(T_1)>0 \implies P(\vec u(t)) \ge \de^{-5} &(t_1\le t<T^*),} }
where the number $5$ may be replaced with any number bigger than $4$. 
Thus we obtain from Theorem \ref{thm:dyn near Ksol}: 
\begin{lemma} \label{lem:col 1sol}
For any $N\in\N$, $\al\in(0,\I)$ and $p\in(2,p^\star(N))$, 
there exist $\de_x^\star\in(0,1)$ and $C_x\in(1,\I)$ such that for any solution $u$ of \eqref{NLKG} satisfying for some $\s\in\{\pm 1\}$ and $\de\in(0,\de_x^\star]$
\EQ{
  \U\sN_0(\vec u(0)) \le \de^2,}
there exist $0\le t_0\le t_1\le\I$ and $z\in C^1([0,t_0);\R^N)$ with the following properties. 
Let $T^*\in(0,\I]$ be the maximal existence time of $u$. Then $t_1\le T^*$. 
For $0\le t<t_0$ we have 
\EQ{ \label{before 1sol inst}
 \pt v:=\vec u-\s\vec Q(x-z) \in \Y^\xc_\perp(z), \pq |z-z(0)| \le C_x\de,
 \pr |p^+(z)v| \le C_x\de^2, \pq \|P^\xcu_\perp(z)v\|_\cH \le C_x(\de e^{-c_*t}+\de^4e^{-c_*(t_0-t)}).} 
$u$ is an asymptotic $1$ soliton if and only if $t_0=\I$. 
Otherwise, $t_0<t_1<\I$ and 
\EQ{
 |t_0-t_1| \le C_x|\log\de|.}
For $t_1\le t<T^*$, we have either one of
\begin{enumerate}
\item $\|\vec u(t)\|_\cH \le \de^5$ \label{decay case}
\item $\cP(\vec u(t)) \ge \de^{-5}$. \label{bup case}
\end{enumerate}
In the former case, $u$ is global and decaying, while in the latter case, $u$ blows up. 
The three cases may be distinguished by the following. 
For any $t\in[0,T^*)$ and $\ti z\in\R^N$ satisfying ($c_X$ is from Theorem \ref{thm:dyn near Ksol})
\EQ{
 \sN_0(\vec u(t),\ti z)\le c_X|\ap^+(\ti z)\vec u(t)| \le c_X^2,} 
the sign $\ta:=\s\sign\ap^+(\ti z)\vec u(t)\in\{\pm 1\}$ is independent of $(t,\ti z)$. 
$\ta=-1$ in the decaying case \eqref{decay case}, and $\ta=+1$ in the blow-up case \eqref{bup case}. 
If $t_0=\I$ then $|\ap^+(\ti z)\vec u(t)|\le C_x\de^2$ for all $t\ge 0$. 
Otherwise, there is some $t\in(t_0,t_1)$ and $\ti z\in\R^N$ such that $|\ap^+(\ti z)\vec u(t)|>\de_2^\star>C_x\de_x^\star$. 
\end{lemma}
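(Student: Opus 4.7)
The plan is to apply Theorem~\ref{thm:dyn near Ksol} with $K=1$, the prescribed sign $\s$, and $\de_2:=\de_2^\star$ fixed (so $\de\ll\de_2^\star$). The theorem furnishes times $0\le T_1\le T_2\le T_3\le T^*$ and a center path $z\in C^1([0,T_3);\R^N)$ maintaining the orthogonality $v:=\vec u-\s\vec Q(x-z)\in\Y^\xc_\perp(z)$; since $K=1$ there are no soliton interactions, so $\eN(D_z)=0$ identically and $T_s=T_2$. I would then define $t_0$ as the first time $|a^+|:=|p^+(z)v|$ exceeds a threshold $c_0\de^2$ for a small absolute constant $c_0$ to be fixed; if the set is empty, $t_0:=\infty$.

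If $t_0=\infty$, then the exponential growth in parts~\eqref{second stage}--\eqref{last stage} of the theorem (which would drive $|a^+|$ past $c_0\de^2$ in finite time as soon as $\sN(\vec u,z)\le\de_1^*|a^+|$) is incompatible with the bound $|a^+|\le c_0\de^2$ for all $t$; this forces $T_1=T_2=T_3=T^*=\infty$, and Lemma~\ref{lem:conv Ksol} (the hypothesis $D_z\to\infty$ is vacuous for $K=1$) then yields $\sN_0(\vec u,z)\to 0$, i.e.\ $u$ is an asymptotic $1$-soliton. If $t_0<\infty$, then since $|a^+|$ reaches size $\de^2$ only well after $T_2$ (where part~\eqref{last stage} begins and $|a^+|\sim e^{\nu^+(t-T_2)}|a^+(T_2)|$ grows up to $|a^+(T_3)|\sim\de_2^\star$), one obtains $T_2\le t_0<T_3$ and $T_3-t_0\lec|\log\de|$. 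The bounds \eqref{before 1sol inst} on $[0,t_0)$ follow directly: parts~\eqref{first stage} and~\eqref{rad stage} give $|z-z(0)|\lec\de$ and $\|v\|_{\cH\cap S(t)}\lec \de e^{-c_*t}$ on $[0,T_2]$, and part~\eqref{last stage} then controls $\|\ga\|_\cH$ on $[T_2,t_0]$ by $e^{-\mu(t-T_2)}\|\ga(T_2)\|_\cH+|a^+|^2$, which is bounded by $\de^4 e^{-c_*(t_0-t)}$ using the exponential profile of $|a^+|$ and $|a^+(t_0)|\sim c_0\de^2$.

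For the post-$t_0$ dynamics I invoke \eqref{energy dec}, obtaining the fixed deficit $E(\vec u(T_3))\le E(\vec Q)-\tfrac{\al\nu^+}{4}|\de_2^\star|^2$. Expanding the Nehari functional about $\s Q$, using $K_0(\s Q)=0$ and $K_0'(\s Q)=-\s(p-1)Q^p$, together with the fact that in the last stage $v_1$ is dominated by $a^+\phi(x-z)$ modulo $O(|a^+|^2+\|\ga\|_\cH)$ terms, I get $K_0(u(T_3))=-c_1\s a^+(T_3)+O(|a^+(T_3)|^2)$ for a positive absolute constant $c_1$; for $\de_2^\star$ small enough the sign of $K_0(u(T_3))$ equals that of $-\s a^+(T_3)$. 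In the decaying case $\s a^+(T_3)<0$, Lemma~\ref{lemma2.2} (applied with the fixed deficit) gives exponential $\cH$-decay at a fixed rate, so some $t_1\in(T_3,\infty)$ with $t_1-T_3\lec|\log\de|$ satisfies $\|\vec u(t_1)\|_\cH\le\de^5$; in the blow-up case $\s a^+(T_3)>0$, Lemma~\ref{lem:bup destiny} yields $\cP(\vec u(t))\ge e^{ct}/C-O(1)$, so $\cP\ge\de^{-5}$ is reached at some $t_1\in(T_3,T^*)$ again with $t_1-T_3\lec|\log\de|$, where $t_1<T^*$ since $\cP$ blows up at $T^*$.

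The sign $\ta$ is determined from the monotonicity $\p_t|a^+|^2\ge\nu^+|a^+|^2$ in parts~\eqref{second stage}--\eqref{last stage}, which forbids $a^+$ from changing sign throughout $[T_1,T_3)$; Lemma~\ref{lem:center} then shows that any alternative center $\ti z$ verifying the detection criterion $\sN_0(\vec u(t),\ti z)\le c_X|\ap^+(\ti z)\vec u(t)|\le c_X^2$ yields $|\ap^+(\ti z)\vec u(t)-a^+(t)|\le\tf14|\ap^+(\ti z)\vec u(t)|$ by \eqref{est center change} and hence the same sign as $a^+(t)$. The witness for $|\ap^+(\ti z)\vec u(t)|>\de_2^\star$ when $t_0<\infty$ is $t=T_3$, $\ti z:=z(T_3)$, giving $|a^+(T_3)|\sim\de_2^\star$. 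The main bookkeeping difficulty is the choice of the threshold constant $c_0$: it must be small enough that $t_0\ge T_2$ (otherwise $t_0$ could land in part~\eqref{second stage} where $|a^+|$ is only weakly exponential and the $|\log\de|$ estimate on $T_3-t_0$ would fail), yet large enough that the radiation bound $\|\ga\|_\cH\lec |a^+|^2\lec c_0^2\de^4$ provides the factor $\de^4$ in \eqref{before 1sol inst}.
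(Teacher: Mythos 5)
Your proposal follows the paper's own proof of this lemma essentially verbatim: apply Theorem \ref{thm:dyn near Ksol} with $K=1$ and $\de_2=\de_2^\star$, use Lemma \ref{lem:conv Ksol} when $T_3=\I$, and combine \eqref{energy dec}, the sign of the Nehari functional $K_0(u(T_3))\sim-\s a^+(T_3)$, and Lemmas \ref{lemma2.2} and \ref{lem:bup destiny} to get the decay/blow-up dichotomy with the $O(|\log\de|)$ time bounds; the detection of the sign $\ta$ via Theorem \ref{thm:dyn near Ksol}-\eqref{est T1} is also the paper's argument. The one incorrect assertion is $T_2\le t_0$. On $[T_1,T_2)$ one only has $\sN(\vec u,z)/\de_1^*\le|a^+|\le\sqrt{\sN(\vec u,z)/B_1^*}$, and since $\sN(\vec u,z)$ can be of size $\de^2$ early in that stage, $|a^+|$ can cross $c_0\de^2$ (indeed reach anything up to $O(\de)$) already during stage \eqref{second stage}; the paper accordingly only places $t_0\in[T_1,T_3)$. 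This slip is harmless: if $t_0<T_2$, the radiation bound on $[0,t_0)$ is covered entirely by the $\de e^{-c_*t}$ term coming from stage \eqref{rad stage}, and $T_3-t_0\lec|\log\de|$ still follows because stage \eqref{second stage} also provides exponential growth of $|a^+|$ at rate at least $\nu^+/2$ together with the a priori bound $|a^+|\lec\de$ there. Consequently your closing discussion of tuning $c_0$ is both unnecessary and backwards: shrinking $c_0$ makes $t_0$ earlier, not later, and the worry that the $|\log\de|$ estimate ``would fail'' if $t_0$ lands in stage \eqref{second stage} is unfounded. With that claim either removed or replaced by the two-case check above, the argument is complete and coincides with the paper's.
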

In particular, the criteria for the decaying case \eqref{decay case} and for the blow-up case \eqref{bup case} imply that those two cases are stable with respect to initial perturbation. The stability of decaying solutions follows generally from the definition, but the stability of blow-up in general is unclear for large $N,p$. 

\subsection{Collapse of $K$-solitons}
Next we consider the general case with $K$-solitons. 
Let $u$ be any solution of \eqref{NLKG} in $\cH$ on the maximal existence interval $[0,T^*)$. 
For any $z\in(\R^N)^K$ with $D_z\gg 1$, we 
consider localized solutions $u_k$ around $z_k\in\R^N$, defined as follows.  
For $k=1\etc K$ and $l=0\etc K$, define cut-off functions by
\EQ{
 \pt \chi_k(x):=\chi(|x-z_k|-\tf23 D_z),
 \pr \chi_0(x):=1-\sum_{k=1}^K \chi(|x-z_k|-\tf 13D_z), \pq \chi_l^C:=1-\chi_l,}
where $\chi\in C^\I(\R)$ has been fixed in \eqref{def chi}. 
Since $D_z\gg 1$, we have $\chi_k\in C_0^\I(\R^N)$ and $\chi_0\in C^\I(\R^N)$. 
For each $k=0\etc K$, let $u_k$ be the solution of \eqref{NLKG} with the initial data 
\EQ{ \label{def uk}
 \vec u_k(0) = \chi_k \vec u(0).}
Let $T^*_k\in(0,\I]$ be the maximal existence time of $u_k$, and
\EQ{
 \U T^* := \min(T^*,\min_{k=0\etc K}T_k^*)\in(0,\I].} 
For $k=1\etc K$, let 
\EQ{
 \pt \Om_k(t):=\{x\in\R^N\mid |x-z_k(0)|< \tf23D_z-t\},
 \pr \Om_k'(t):=\{x\in\R^N \mid |x-z_k(0)|< \tf13D_z+1+t\},
 \pq \Om_0(t):=\Ca_{k=1}^K (\R^N\setminus \ba{\Om_k'(t)}).}
Then we have for $k=0\etc K$
\EQ{
 \pt x\in\Om_k(0) \implies \vec u_k(0,x)=\vec u(0,x),}
so the finite propagation property of \eqref{NLKG} implies for $t\in[0,\U T^*)$ 
\EQ{ \label{uk match}
 \pt x\in\Om_k(t) \implies \vec u_k(t,x)=\vec u(t,x).}
Since 
\EQ{
 \pt 0\le t<\tf 16D_z-\tf12=:T_z^\star 
 \pr\implies \Om_k'(0) \subset \Om_k'(t) \subset \Om_k(t) \subset \Om_k(0)\pq(k=1\etc K)
  \pr\implies \Cu_{k=0}^K \Om_k(t)=\R^N,} 
we may recover the original solution $u$ completely from $\{u_k\}_{k=0\etc K}$ for $0\le t<T_z^\star$. 

Now we consider the special case where $u$ is initially 
close to a $K$-soliton $Q_\Si$ for some $\s\in\{\pm 1\}^K$ as before, namely
\EQ{
 \pt v:=\vec u-Q_\Si, \pq  \eN(D_z)=:\de_D \ll 1, \pq \|v(0)\|_\cH=:\de \ll 1.}
Then for the background part $u_0$, the small data theory for \eqref{NLKG} with
\EQ{
  \|\vec u_0(0)\|_{\cH} \le \|\vec u(0)\|_{\cH(\Om_0(0))} \lec \eN(\tf13D_z) + \|v(0)\|_\cH \lec \de_D^{1/3} + \de \ll 1}
implies that $u_0$ is a small global ($T_0^*=\I$) and decaying solution with 
\EQ{ \label{est u0}
 \|\vec u(t)\|_{\cH(\Om_0(t))} \le \|\vec u_0(t)\|_\cH \lec e^{-\mu t}\|\vec u_0(0)\|_{\cH} \lec e^{-\mu t}(\de_D^{1/3}+ \de) \ll 1,}
for some constant $\mu\in(0,1)$ depending only on $N,\al$. 
Similarly we obtain 
\EQ{ \label{ext dec uk}
 \|\vec u_k(t)\|_{\cH(\R^N\setminus\ba{\Om_k'(t)})} \lec e^{-\mu t}\|\vec u_k(0)\|_{\cH(\Om_k(-1)\setminus\ba{\Om_k'(0)})} \lec e^{-\mu t}(\de_D^{1/3}+ \de).}
Hence, if any of $u_k$ blows up before $T_z^\star$, then for the earliest $T_k^*=\U T^*$, we have 
\EQ{
 \I=\lim_{t\to T_k^*-0}\|\vec u_k(t)\|_\cH \pt= \lim_{t\to T_k^*-0}\|\vec u_k(t)\|_{\cH(\Om_k'(t))}
 \pn=\lim_{t\to T_k^*-0}\|\vec u(t)\|_{\cH(\Om_k'(t))}, }
so $u$ also blows up at $T_k^*=\U T^*=T^*<T_z^\star$. 
In other words, if $T_z^\star<T^*$ then $T_z^\star<\U T^*$. 
Note that the decomposition into $u_k$ works for any solution $u$ in $\cH$, as well as the comparisons with the original solution including the blow-up time.

The exponential decay of the ground state and eigenfunctions implies for $0\le l\le K$ with $l\not=k$ and $\x\in\xscu$,  
\EQ{
 \pt \|\chi_l \vec Q_k\|_\cH + \|\chi_l \vec Y^\x_k\|_\cH \lec \eN(\tf13D_z) \lec \de_D^{1/3} 
 \pr \|\chi_k^C \vec Q_k\|_\cH + \|\chi_k^C \vec Y^\x_k\|_\cH \lec \eN(\tf23D_z) \lec \de_D^{2/3}.}
Hence for $k=1\etc K$, we have 
\EQ{
 \|\vec u_k(0)-\vec Q_k\|_{\cH} \pt\lec \|\chi_k^C\vec Q_k\|_\cH + \sum_{l\not=k}\BR{\|\chi_k\vec Q_l\|_\cH + \|\chi_k Y_l^+\|_\cH}+\|v(0)\|_\cH  \pr\lec \eN(\tf13D_z)+\|v(0)\|_\cH \lec \de_D^{1/3}+\de,}
and similarly for $\x\in\xscu$, 
\EQ{
 \pt|p^\x(z_k)(\vec u_k(0)-\vec Q_k)-p^\x(z_k)v| 
 \pr\lec \|\chi_k^C\vec Q_k\|_\cH + \sum_{l\not=k}\BR{|p^\x(z_k)\vec Q_l| + |p^\x(z_k)Y_l^+|}+ |p^\x(z_k)\chi_k^Cv| 
  \pn\lec \de_D^{2/3}.}
In particular, Lemma \ref{lem:center} yields $\z(0)\in(\R^N)^K$ such that 
\EQ{ \label{uk init center}
 |\z(0)-z| \lec \de_D^{2/3}, \pq \vec u_k(0)-\s_k\vec Q(x-\z_k(0)) \in \Y^\xc_\perp(\z_k).} 
Assuming
\EQ{
 \de_D^{1/3} \le \de \ll \de_x^\star,}
we may apply Lemma \ref{lem:col 1sol} to each $u_k$ around $Q_k$ for $k=1\etc K$.
Then there is an open set $I\subset\R$ (a finite union of intervals or empty), such that $|I|\lec K|\log\de|$ and for each $k=1\etc K$ and any $T\in[0,\I)\setminus I$, we have one of the following three exclusively: 
\begin{itemize}
\item[($0$)] For all $0\le t\le T$, $|b_k^+|\lec\de^2$ (soliton case),
\item[($-$)] For all $T\le t<T_k^*=\I$, $\|\vec u_k(t)\|_\cH\le\de^5$ (decaying case),
\item[($+$)] For all $T\le t<T_k^*<\I$, $\cP(\vec u_k(t))\ge \de^{-5}$ (blow-up case),
\end{itemize}
where in the soliton case, the lemma yields some $\z_k\in C^1([0,T];\R^N)$ such that 
\EQ{ \label{est vk}
 \pt v_k:=\vec u_k - \s_k\vec Q(x-\z_k) \in \Y^\xc_\perp(\z_k), \pq b^+_k:=p^+(\z_k)v_k, \pq \ga_k:=P^\xcu_\perp(\z_k)v_k, 
 \pr |b^+_k| \lec \de^2, \pq \|\ga_k\|_\cH \lec \de e^{-c_*t} + \de^4 e^{-c_*(T-t)}, \pq |\z_k-z_k| \lec \de.}
Moreover, we may include $[0,C|\log\de|]$ in $I$ for some constant $C>1$ such that for any $T\in[0,T_k^*)\setminus I$ we have 
\EQ{
 \|\ga_k(T)\|_\cH \lec \de^4.}
Correspondingly, the indices $1\etc K$ are decomposed into $\La_0(T),\La_-(T),\La_+(T)$ with
\EQ{ \label{def La}
 \La_\dia(T):=\{k\in\{1\etc K\} \mid \text{$(\dia)$ holds for $(k,T)$}\}. }
The set $\La_0(T)$ is non-increasing in $T$, while the other two are non-decreasing. 
Note that in the soliton case $\La_0(T)$, we have $T\le t_0$, and in the other cases $\La_\pm(T)$, we have $T\ge t_1$, where $t_0,t_1$ are given by the lemma depending on $k$.

Now the idea is to investigate the original solution $u$ avoiding the transition time $I$. 
We have no control on the distribution or position of $I$, but the total length is bounded. 
So we impose another smallness condition: 
\EQ{ \label{d0 small 2}
 |\log\de_D| \gg |\log \de|,}
or in other words, $\de_D \le \de^M$ for some big constant $M>1$. Then we have 
\EQ{ \label{sep for propa}
 T_z^\star = \tf16D_z-\tf12 \sim |\log\de_D| \gg K|\log \de| \gec |I|.}
 
In particular we have plenty of $T\in(0,T_z^\star)\setminus I$. 
Since $T<T_z^\star$, if there is any $T_k^*\le T$ then $T^*\le T$. 
Suppose that $\U T^*>T$. 
If there is any blow-up case $\La_+(T)\not=\empt$, then denoting the localized $\cP$ by 
\EQ{
 \cP_\Om(\fy) := \int_\Om [\fy_1(x)\fy_2(x)+\al|\fy_1(x)|^2]dx,}
we have at $t=T$, using \eqref{uk match}, 
\EQ{
 \cP(\vec u) \pt= \sum_{k=1}^K P_{\Om_k'}(\vec u_k) + P_{\Om_0}(\vec u_0)
 \pn\ge \sum_{k=1}^K \cP(\vec u_k) - CK\|\vec u_0\|_\cH^2.}
Since $\|\vec u_0\|_\cH\ll 1$ and all $\cP(\vec u_k)$ are bounded from below $\cP(\vec u_k)\gec-1$ uniformly, while at least one of them satisfies $\cP(\vec u_k(T))\ge \de^{-5}\gg 1$, we deduce that
\EQ{
 \cP(\vec u(T)) \ge \de^{-5}/2 \gg E(\vec u(0)) \ge E(\vec u(T)),}
so $u$ satisfies the blow-up criterion \eqref{bc-P} at $t=T$ and blows up.

Thus we conclude that if $u$ is global, then $\La_+(T)=\empt$ for all $T\in(0,T_z^\star)\setminus I$. 
Moreover, if $T\gec|\log\de|$ then using \eqref{uk match} and \eqref{est u0} together with the above estimates, we obtain 
\EQ{
 \pt \vec u(T) = \sum_{k\in \La_0(T)}[\s_k\vec Q+b^+_k(T) Y^+](x-\z_k(T)) + O(\de^4),
 \pr |b^+_k(T)| \lec \de^2, \pq \sum_{k\in\La_0(T)}|z_k-\z_k(T)| \lec \de,}
where the error term is bounded in $\cH$. 
In particular, if $\La_0(T)=\empt$ or all $k$ are in the decaying case, then $u$ is also small at $t=T$, so it is global and decaying. 
We may apply Lemma \ref{lem:center} to modulate $(\z_k(T))_{k\in\La_0(T)}$ to a unique $(z_k(T))_k$ with the orthogonality condition with the difference 
\EQ{ \label{uk last center}
 |\z(T)-z(T)| \lec \eN(D_{\z_k(T)}) + \de^4 \sim \de^4,}
which does not affect the above estimates. 
Thus we have obtained 
\begin{theorem} \label{thm:col Ksol}
For any $N\in\N$, $\al\in(0,\I)$, $p\in(2,p^\star(N))$ and $K\in\N$, 
there exist $\de_X^\star\in(0,1)$ and $C_X\in(1,\I)$ such that the following hold. Let $z\in(\R^N)^K$, $\s\in\{\pm 1\}^K$ and let $u$ be a solution of \eqref{NLKG} in $\cH$, satisfying 
\EQ{ \label{col away cond}
 \sN_0(\vec u(0),z) \le \de, \pq \eN(D_z) \le \de^{C_X}}
for some $\de\in(0,\de_X^\star]$. Let $u_k$ be the solution of \eqref{NLKG} defined by \eqref{def uk} for each $k=1\etc K$ with the maximal existence time $T_k^*\in(0,\I]$. Then there is an open set $I\subset\R$ with $|I|\le C_X|\log\de|$ such that for each $T\in[0,\I)\setminus I$ we have the decomposition \eqref{def La} of $\{1\etc K\}$ into the three cases. 
$\La_-(T)$ and $\La_+(T)$ are non-decreasing for $T$ and for small perturbation of the initial data $\vec u(0)$. If $a^+_k:=\ap^+_k(\ti z)\vec u(0)$ for some $\ti z\in(\R^N)^K$ satisfies $C_X^2\sN_0(\vec u(0),\ti z)\le C_X|a^+_k| \le 1$, then $k\in\La_{\s_k\sign a^+_k}(0)$. 
If $\La_+(T)\not=\empt$ for some $T<T_z^\star:=\tf16D_z-\tf12$, then $u$ blows up. 
For each $k\in\La_0(T)$, we have the estimates \eqref{est vk} around the $1$-soliton. 
If $T\in(0,T_z^\star)\setminus I$ and $\La_+(T)=\empt$, then there is $z(T)=(z_k(T))_{k\in\La_0(T)}\in(\R^N)^{\#\La_0(T)}$ such that 
\EQ{ \label{decop after col}
  \pt v:=\vec u(T)-\sum_{k\in\La_0(T)}\s_k\vec Q(x-z_k(T)) \in \Y^\xc_\perp(z(T)),
  \pr |p^+(z_k)v| \le C_X\de^2, \pq \|P^\xcu_\perp(z(T))v\|_\cH \le C_X\de^4, \pq |z_k(T)-z_k| \le C_X\de.}
\end{theorem}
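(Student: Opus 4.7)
The plan is to reduce the $K$-soliton collapse to a superposition of $K$ independent $1$-soliton collapses, by means of the finite speed of propagation. Fix $C_X$ large (to be chosen in terms of $K$ and the constants $\de_x^\star,C_x$ of Lemma \ref{lem:col 1sol}) and set $\de_X^\star$ small. Given $u,z,\s$ as in the hypothesis, introduce the cut-offs $\chi_0,\chi_1,\ldots,\chi_K$ from \eqref{def uk} and the localized solutions $u_k$ together with the expanding/shrinking regions $\Om_k(t),\Om_k'(t)$. Since $\eN(D_z)\le\de^{C_X}$, choosing $C_X$ large forces $T_z^\star=\tfrac16 D_z-\tfrac12\gg K|\log\de|$, which will furnish the time budget for all transitions. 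The key consequence of finite propagation is \eqref{uk match}: inside $\Om_k(t)$, $u_k$ coincides with $u$ for every $t$ below the earliest blow-up time among $u,u_1,\ldots,u_K$.

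Next, I would handle the pieces separately. The background piece $u_0$ satisfies $\|\vec u_0(0)\|_\cH\lec\eN(D_z/3)+\de\lec\de$, so small-data theory yields $T_0^\star=\I$ together with the exponential decay \eqref{est u0}, and the analogous exterior estimate \eqref{ext dec uk} for each $u_k$ follows from the same argument applied outside $\Om_k'(t)$. For each $k=1,\ldots,K$, a direct calculation using the exponential decay of $Q$ and $\phi$ produces $\ti z_k\in\R^N$ with $|\ti z_k-z_k|\lec\de_D^{2/3}$ and $\vec u_k(0)-\s_k\vec Q(x-\ti z_k)\in\Y^\xc_\perp(\ti z_k)$ satisfying $\U\sN_0(\vec u_k(0))\lec\de^2$, so Lemma \ref{lem:col 1sol} applies to every $u_k$. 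Its output is a pair $(t_0^{(k)},t_1^{(k)})$ and the trichotomy (soliton/decay/blow-up). Taking $I$ to be the union of the $K$ transition windows $(t_0^{(k)},t_1^{(k)})$ together with $[0,C|\log\de|]$ yields $|I|\le C_X|\log\de|$, and for any $T\notin I$ each index $k$ lies unambiguously in exactly one of $\La_0(T),\La_-(T),\La_+(T)$, with the estimates \eqref{est vk} in the soliton case and $\|\ga_k(T)\|_\cH\lec\de^4$ by the $[0,C|\log\de|]$ buffer.

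Monotonicity of $\La_\pm$ in $T$ is built into Lemma \ref{lem:col 1sol}; stability under initial perturbation follows because the decay criterion $\|\vec u_k(t)\|_\cH\le\de^5$ and the blow-up criterion $\cP(\vec u_k(t))\ge\de^{-5}$ are both open conditions on $\vec u_k(0)$ in $\cH$, and the map $\vec u(0)\mapsto\chi_k\vec u(0)$ is continuous. The identification of which case each $k$ falls into from the sign of $a^+_k$ is read off directly from the last statement of Lemma \ref{lem:col 1sol} applied to $u_k$, combined with the estimate $|p^+(\ti z_k)(\vec u_k(0)-\vec Q_k)-p^+(z_k)v|\lec\de_D^{2/3}$ controlling the projection error between $u$ and $u_k$.

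The decisive step is the blow-up transfer from any $u_k$ with $k\in\La_+(T)$ to $u$ itself, for $T<T_z^\star$. Writing $\cP_\Om$ for the localized functional and using \eqref{uk match} on $\Om_k'(T)$ together with $\Cu_k\Om_k(T)=\R^N$, one obtains $\cP(\vec u(T))\ge\sum_{k=1}^K\cP(\vec u_k(T))-CK\|\vec u_0(T)\|_\cH^2$, up to boundary errors controlled by \eqref{ext dec uk}. Every $\cP(\vec u_k(T))$ is bounded below by a constant (from Lemma \ref{lem:bup destiny} applied when $k\notin\La_+$, and trivially from the lower bound on $\cP$ for bounded-energy states otherwise), while at least one satisfies $\cP(\vec u_k(T))\ge\de^{-5}$. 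Hence $\cP(\vec u(T))\ge\tfrac12\de^{-5}$, which exceeds $\tfrac{p+1}{p-1}(1+2\al)E(\vec u(T))$, so the blow-up criterion \eqref{bc-P} triggers and $u$ blows up. When instead $\La_+(T)=\empt$, one reassembles $\vec u(T)=\sum_{k\in\La_0(T)}[\s_k\vec Q+b_k^+Y^+](x-\ti\z_k(T))+O(\de^4)$ from \eqref{uk match}, \eqref{est u0}, and \eqref{est vk}; Lemma \ref{lem:center} then perturbs $(\ti\z_k)_{k\in\La_0(T)}$ to the unique $z(T)$ satisfying orthogonality, with displacement bounded by $\eN(D_{\ti\z(T)})+\de^4\lec\de^4$, yielding \eqref{decop after col}.

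The main obstacle is calibrating the separation exponent $C_X$: it must be large enough that $T_z^\star\gg K|\log\de|$ so that the whole transition set $I$ fits strictly inside the range on which finite propagation makes $u$ reconstructible from $(u_k)_{k=0}^K$, yet the smallness $\eN(D_z)\le\de^{C_X}$ must also absorb all the $\eN(D_z/3)$ and $\de_D^{2/3}$ errors that appear when transplanting the spectral decomposition from $u$ to each $u_k$ and back. Every other step is either a direct quotation from Lemma \ref{lem:col 1sol} applied componentwise, or an application of the localized blow-up criterion \eqref{bc-P}.
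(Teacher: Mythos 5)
Your proposal is correct and follows essentially the same route as the paper: localize with the cut-offs $\chi_k$, run Lemma \ref{lem:col 1sol} on each $u_k$, collect the transition windows into $I$ with $T_z^\star\gg K|\log\de|$ guaranteed by $\eN(D_z)\le\de^{C_X}$, transfer blow-up to $u$ via the localized $\cP$ and criterion \eqref{bc-P}, and reassemble with Lemma \ref{lem:center} otherwise. The only (cosmetic) slip is the attribution of the uniform lower bound $\cP(\vec u_k(T))\gec -1$: for $k\notin\La_+$ it comes trivially from the uniform $\cH$-bound, while Lemma \ref{lem:bup destiny} is not needed there.
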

In short, if we avoid the transition time $|I|\lec|\log\de|$, then either the solution settles down in a neighborhood of solitons (of a smaller number) or blows up. 

Therefore, if we start with sufficient distance of solitons $\eN(D_z)\ll 1$, then we may repeat applying Theorem \ref{thm:dyn near Ksol} and the above one, until we get $\La_+(T)\not=\empt$ (blow-up) or $\La_0(T)=\empt$ (decaying). 
Note that the solitons cannot get essentially closer, under the soliton repulsivity condition \eqref{sol repul}. 

Thus we conclude that under the soliton repulsivity condition, all solutions starting near $K$-solitons should be either decaying, blowing-up, or getting around some $J$-solitons with $J\le K$. 
Under the strong repulsivity condition, every $u$ in the last case is indeed an asymptotic $J$-soliton, by Lemma \ref{lem:conv Ksol}. 
The repulsivity conditions are valid if $K=2$ with $\s_1\s_2=-1$. 
In more general case, however, it is possible that solitons get closer, violating the condition \eqref{col away cond} after some time. 

Note also that we can not simply repeat the above theorem by itself to follow the full-time dynamics around $K$-solitons, because of the time limit $T<T_z^\star$.

\section{Difference estimate around collapsing solitons} \label{sect:diff col}
In this section, we consider the difference estimate for two solutions while some of the solitons is collapsing as above.
\subsection{Difference estimate during the collapse}
Under the assumption of Theorem \ref{thm:col Ksol} on the solution $u$, 
let $T\in(0,T_z^\star)\setminus I$ with $\La_+(T)=\empt$. 
Let $u^\ci$ be another solution starting close to $u$ at $t=0$, and define $u_k^\ci$ for $k=0\etc K$ in the same way as $u_k$, namely by \eqref{def uk}. Let $u'=u^\ci-u$ and $u_k':=u_k^\ci-u_k$. Then \eqref{uk match} applied to $u^\ci$ implies
\EQ{
 t\in[0,\min(T^{*\ci},T_k^{*\ci})),\ x\in\Om_k(t) \implies u_k^\ci(t,x)= u^\ci(t,x),}
where $T^{*\ci},T_k^{*\ci}$ denote the maximal existence times of $u^\ci$ and $u_k^\ci$, respectively. 
Hence we have the same identities for the differences $u_k'$, $u'$ as long as they exist. 

Assuming that $\|\vec u'(0)\|_\cH\le\de\ll\de_X^\star$, we may apply Theorem \ref{thm:col Ksol} also to $u^\ci$, which yields the transition time $I^\ci$ for $u^\ci$ with $|I^\ci|\lec|\log\de|$. 
For $T\in(0,T_z^\star)\setminus(I\cup I^\ci)$, we have the classification of the localized solutions $u_k$ by $\La_\dia(T)$ and of $u_k^\ci$ by $\La_\dia^\ci(T)$ for $\dia=0,\pm$. 
We want to estimate the difference $\vec u'$ under the assumption 
\EQ{
 \La_\dia(T)=\La_\dia^\ci(T)\ (\dia=0,\pm), \pq \La_+(T)=\La_+^\ci(T)=\empt,}
namely the same classification for $u_k$ and $u_k^\ci$ with no blow-up.

For each $k\in\La_-(T)=\La_-^\ci(T)$, $u_k$ and $u_k^\ci$ are uniformly bounded for $t\ge 0$ and small for $t\ge T$. 
On one hand, the standard argument for the Lipschitz dependence on the initial data yields on $t\le T$
\EQ{ \label{Gronwall}
 \|\vec u_k'(t)\|_\cH \lec e^{C_g t}\|\vec u_k'(0)\|_\cH,}
for some constant $C_g>0$ dependent only on $N,p$. 
On the other hand, since $\|\vec u_k\|_\cH\le\de^5$ is uniformly small for $t\ge T$, the equation for $u_k'$ becomes essentially free. More precisely, we have
\EQ{
 (\p_t^2-\De+1+2\al\p_t)u_k' = f(u_k^\ci)-f(u_k) = O(|u_k|^{p-1}|u_k'|+|u_k'|^p),}
and the same argument as for Lemma \ref{lem:linene damp} yields (actually this case is much easier)
\EQ{ \label{small decay}
 \sup_{t\ge T} e^{\mu(t-T)}\|\vec u_k'(t)\|_\cH \lec \|\vec u_k'(T)\|_\cH,}
for some constant $\mu>0$ depending only on $N,\al$. 
Then, combining the growth bound \eqref{Gronwall} and the decay \eqref{small decay}, we obtain
\EQ{
 \|\vec u_k'(t)\|_\cH \lec e^{\mu(T-t)+C_g T}\|\vec u_k'(0)\|_\cH\pq(t\ge T).}
Extending $I$ and $I^\ci$ after $T$ in length $O(|\log\de|)$, we may improve this bound to 
\EQ{ \label{col damp}
 t \ge T \implies \|\vec u_k'(t)\|_\cH \le \de^6\|\vec u_k'(0)\|_\cH.} 

For each $k\in\La_0(T)=\La_0^\ci(T)$, we may apply the difference estimate around $1$-soliton of Lemma \ref{lem:diff 1sol} to $u_k$ and $u_k^\ci$. Let $\z_k\in C^1([0,T];\R^N)$ be the soliton center for $u_k$ with the orthogonality 
$v_k:=\vec u_k-\s_k\vec Q(x-\z_k)\in\Y^\xc_\perp(\z_k)$. Let 
\EQ{ 
 \de_1:=\|\vec v_k\|_{L^\I_t(0,T;\cH)}+\de, \pq b^+_k:=p^+(\z_k)v_k, \pq b^{+'}_k:=p^+(\z_k)\vec u_k'.}
We have $\de_1\sim\de\ge\|\vec u'(0)\|_\cH$. Hence, if $\de \ll \de_m^\star$ and
\EQ{ \label{diff inst uk}
 \de_1\|\vec u_k'(0)\|_\cH \le \de_m^\star|b^{+'}_k(0)|,}
then Lemma \ref{lem:diff 1sol} implies that for $t\in[0,T]$, 
\EQ{ \label{ak diff grw}
 \pt |e^{-\nu^+t}b^{+'}_k-b^{+'}_k(0)| \le \tf14|b^{+'}_k(0)|, 
 \pq \de_1\|\vec u_k'\|_\cH \le C_m\de_m^\star|b^{+'}|. }
Note that the condition \eqref{a priori cond a'} in the lemma holds on $[0,T]$, since $k\in\La_0(T)=\La_0^\ci(T)$ implies $\|v_k\|_\cH+\|v_k^\ci\|_\cH\lec\de\ll\de^{1/3}$. 

We need to relate the decomposition in the different coordinates at $t=0$: 
\EQ{
 |b^{+'}_k(0)-p^+(z_k)\vec u'(0)|\pt= |(p^+(\z_k)-p^+(z_k))\vec u_k'(0)-p^+(z_k)\chi_k^C\vec u'(0)| 
  \pr\lec [|\z_k(0)-z_k|+\eN(\tf13D_z)]\|\vec u'(0)\|_\cH \lec \de_D^{1/3}\|\vec u'(0)\|_\cH,}
where \eqref{uk init center} is used. 
Hence the condition \eqref{diff inst uk} is satisfied (using $\de_D\ll\de^3$), if
\EQ{ \label{diff inst dom uk}
 C\de \|\vec u'(0)\|_\cH \le \de_m^\star|p^+(z_k)\vec u'(0)|,}
for some constant $C\in(1,\I)$.

Then we may continue the difference estimate for $t\ge T$ with
\EQ{ 
 \pt\|\vec u'(T)-\sum_{k\in\La_0(T)}\vec u_k'(T)\|_\cH 
 \pr\lec \sum_{l\in\La_-(T)}\|\vec u_l'(T)\|_{\cH(\Om_l'(T))}  + \sum_{k\in\La_0(T)} \|\vec u_k'(T)\|_{\cH(\R^N\setminus\Om_k'(T))}  + \|\vec u_0'(T)\|_{\cH}
  \pr\lec \sum_{0\le l\le K}\de^6\|\vec u_l'(0)\|_\cH \lec \de^6\|\vec u'(0)\|_\cH,}
where the first inequality uses \eqref{uk match} for $T<T_z^\star$, 
and the second inequality uses the damped decay estimate \eqref{col damp} for $l\in\La_-(T)$ as above, and similarly for the other terms by the argument of \eqref{est u0}--\eqref{ext dec uk} applied to the difference. 
Apply Lemma \ref{lem:center} to $\vec u(T)$ to get the centers $z(T)$ with the orthogonality. 
Let $v:=\vec u-\vec Q_\Si$, $a^+_k:=p^+(z_k)v$, $\be:=P^\xc_\perp(z)v$, and similarly for $u^\ci$ and $u'$. 
Then at $t=T$ and for $k\in\La_0(T)$, we have, using \eqref{uk last center} and that the radius of $\Om_k(T)>D_z/2$, 
\EQ{ \label{diff b' a'}
 |b^{+'}_k(T)-a^{+'}_k(T)| \pt= |(p^+(\z_k)-p^+(z_k))\vec u_k'-p^+(z_k)(\vec u'-\vec u_k')| 
  \pr\le |\z_k-z_k|\|\vec u_k'\|_\cH + \eN(\tf12D_z)[\|\vec u'\|_\cH+\|\vec u_k'\|_\cH].}
Combining the above estimates \eqref{diff inst dom uk}--\eqref{diff b' a'} with \eqref{ak diff grw}, we obtain  
\EQ{
 \de\|\vec u'(T)\|_\cH \lec \de_m^\star|a^{+'}(T)|,}
where $\vec u'(0)$ is absorbed by the last term, since we have \eqref{diff inst dom uk}.
Thus we have obtained 
\begin{lemma} \label{lem:diff col}
For any $N\in\N$, $\al\in(0,\I)$, $p\in(2,p^\star(N))$ and $K\in\N$, 
there exist $\de_Y^\star\in(0,\de_X^\star]$ and $C_Y\in[C_X,\I)$ such that the following hold. 
Let $u,u^\ci$ be two solutions of \eqref{NLKG} in $\cH$, both satisfying the assumption of Theorem \ref{thm:col Ksol} for the same $z\in(\R^N)^K$, $\s\in\{\pm 1\}^K$ and $\de\in(0,\de_Y^\star]$. 
Let $I,I^\ci\subset\R$ be the open sets, and $\La_\dia(T),\La^\ci_\dia(T)\subset\{1\etc K\}$ be the index sets, given by the theorem respectively for $u$ and $u^\ci$. 
Then there is an open set $\ti I\subset\R$ satisfying $I\cup I^\ci\subset \ti I$, $|\ti I|\le C_Y|\log\de|$ and the following properties. Let $T\in(0,T_z^\star)\setminus\ti I$ and suppose that 
$\La_\dia(T)=\La_\dia^\ci(T)$ for all $\dia=0,\pm$ with $\La_+(T)=\empt$. Let $u':=u^\ci-u$ and suppose that 
\EQ{
  a^{+'}_k(0) := p^+(z_k)\vec u'(0) \implies \de\|\vec u'(0)\|_\cH \le \de_Y^\star|a^{+'}_k(0)|}
for some $k\in\La_0(T)$. Then 
\EQ{
 |e^{-\nu^+T}a^{+'}_k(T)-a^{+'}_k(0)|\le \tf13|a^{+'}_k(0)|, \pq \de\|\vec u'(T)\|_\cH \le C_Y\de_Y^\star|a^{+'}_k(T)|,}
where $a^{+'}_k(T):=p^+(z_k(T))\vec u'(T)$ is defined for the decomposition in \eqref{decop after col}. 
\end{lemma}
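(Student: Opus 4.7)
The strategy is to reduce the estimate on $\vec u':=\vec u^\ci-\vec u$ to per-component estimates via the finite-speed-of-propagation decomposition of Section~10. I apply the cut-off decomposition $\vec u_k(0):=\chi_k\vec u(0)$ and $\vec u_k^\ci(0):=\chi_k\vec u^\ci(0)$ to both solutions for $k=0,\dots,K$, and set $u_k':=u_k^\ci-u_k$. The matching identity $\vec u(t,x)=\vec u_k(t,x)$ on $\Om_k(t)$ for $t<T_z^\star$ (and its analog for $\vec u^\ci$) reconstructs $\vec u'$ from the $\vec u_k'$ modulo exterior contributions, and those are controlled by the damped free-decay estimate \eqref{est u0} applied to the difference.

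For each index class the per-component bound on $\vec u_k'$ is of a different flavor. If $k\in\La_-(T)$, both $\vec u_k$ and $\vec u_k^\ci$ are uniformly small for $t\ge T$, so a short-time Gronwall bound $\|\vec u_k'(T)\|_\cH\lec e^{C_g T}\|\vec u_k'(0)\|_\cH$ combines with a soliton-free damped energy decay (a specialization of Lemma~\ref{lem:linene damp}) on $t\ge T$ to yield $\|\vec u_k'(t)\|_\cH\le\de^6\|\vec u_k'(0)\|_\cH$; enlarging the exceptional set by a buffer of length $O(|\log\de|)$ after $T$ lets the $\de^6$ factor swallow $e^{C_g T}$. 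The background piece $u_0'$ is handled identically. If $k\in\La_0(T)$, I apply Lemma~\ref{lem:diff 1sol} to the pair $(u_k,u_k^\ci)$ around the single soliton at $\z_k(0)$; its a priori hypothesis \eqref{a priori cond a'} is automatic because $k\in\La_0(T)\cap\La_0^\ci(T)$ forces $\|\vec u_k-\vec Q_k\|_\cH,\|\vec u_k^\ci-\vec Q_k\|_\cH\lec\de$, so a fortiori $|b_k^{+\prime}|\ll\de^{1/3}$, and the lemma directly produces the exponential-growth bound \eqref{ak diff grw}.

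The central technical obstacle, and the point that must be checked with care, is the two coordinate reconciliations bracketing Lemma~\ref{lem:diff 1sol}. At $t=0$, the hypothesis controls $a_k^{+\prime}(0):=p^+(z_k)\vec u'(0)$, whereas Lemma~\ref{lem:diff 1sol} requires the dominance of $b_k^{+\prime}(0):=p^+(\z_k(0))\vec u_k'(0)$; the discrepancy is $O\bigl([|z_k-\z_k(0)|+\eN(D_z/3)]\|\vec u'(0)\|_\cH\bigr)\lec\de_D^{1/3}\|\vec u'(0)\|_\cH$ by \eqref{uk init center}, and the hypothesis $\eN(D_z)\le\de^{C_X}$ with $C_X$ chosen large enough makes this far smaller than $\de\|\vec u'(0)\|_\cH$, so the dominance hypothesis transfers into \eqref{diff inst uk}. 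At $t=T$, the symmetric comparison \eqref{diff b' a'} between $b_k^{+\prime}(T)$ and $a_k^{+\prime}(T):=p^+(z_k(T))\vec u'(T)$ uses the post-collapse modulation $z(T)$ supplied by \eqref{decop after col}, together with finite-speed truncation errors of order $\eN(D_z/2)$; these errors are again absorbable by the dominance bound just established.

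Concretely, I will take $\ti I:=I\cup I^\ci\cup[0,C|\log\de|]$ with $C$ large enough that $e^{C_g|\log\de|}\ll\de^{-1}$ and that $\|\ga_k(T)\|_\cH\lec\de^4$ for all $T\notin\ti I$, ensuring $|\ti I|\le C_Y|\log\de|$ as required. Summing the per-$k$ estimates, the $\La_-$ and background contributions at $t=T$ are bounded by $\de^6\|\vec u'(0)\|_\cH$ and are thus negligible, while the $\La_0$ contribution gives exactly the desired growth bound $|e^{-\nu^+T}a_k^{+\prime}(T)-a_k^{+\prime}(0)|\le\tfrac13|a_k^{+\prime}(0)|$ via \eqref{ak diff grw}, as well as the dominance $\de\|\vec u'(T)\|_\cH\le C_Y\de_Y^\star|a_k^{+\prime}(T)|$. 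The hard part will be simply orchestrating these estimates carefully enough that the two coordinate-change errors are provably smaller than the single unstable quantity whose dominance is being bootstrapped; the dynamical difficulty of controlling the unstable \emph{direction} has already been absorbed into Lemma~\ref{lem:diff 1sol}.
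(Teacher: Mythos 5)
Your proposal is correct and follows essentially the same route as the paper's own argument: the cut-off decomposition $u_k,u_k^\ci$ with the finite-propagation matching identity, the Gronwall-plus-damped-decay bound $\de^6\|\vec u_k'(0)\|_\cH$ for the collapsed and background components (with the $O(|\log\de|)$ buffer added to the exceptional set), Lemma~\ref{lem:diff 1sol} for the surviving components, and the two coordinate reconciliations via \eqref{uk init center} and \eqref{diff b' a'}. Nothing essential is missing.
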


In short, if two nearby solutions have the same behavior during the collapse, in terms of the localized solutions $u_k$ without the blow-up case, and if some of the unstable modes around the surviving solitons is initially dominant in the difference, then that component is exponentially growing and remains dominant after the collapse. 

\subsection{Difference estimate around 2-to-1 solitons} \label{sect:diff 2-to-1}
The dominant unstable modes grow exponentially in the above lemma in the same fashion as in Section \ref{sect:diff Ksol}. 
Hence we may combine those estimates to consider difference estimates around asymptotic $1$-solitons starting near repulsive $2$-solitons. 

Let $u,u^\ci$ be two global solutions of \eqref{NLKG} in $\cH$ starting near a $2$-soliton $Q_\Si:=\s_1 Q(x-z_1)+\s_2Q(x-z_2)$ with $K=2$ and $\s_1\s_2=-1$, 
\EQ{
 \sN_0(\vec u(0),z)+\sN_0(\vec u^\ci(0),z) \le |\de_0|^2 \ll 1, \pq \eN(D_z) \le \de_D\ll 1.}
Assuming $\de_0\ll\de_0^\star$, we may apply Theorem \ref{thm:dyn near Ksol} to both $u,u^\ci$ with some $\de_2\in(0,\de_2^\star]$ satisfying $\de_0\ll\de_2$. Let $z\in C^1([0,T_3);(\R^N)^2)$ be the unique centers for $u$ given by Lemma \ref{lem:center} for the orthogonality $v\in \Y^\xc_\perp(z)$, and $a_k^+:=p^+(z_k)v$ as before. 
We may assume without losing generality by symmetry that the collapsing time is earlier for $u$ than $u^\ci$ and for the $Q_2$ part than the $Q_1$ part, namely
\EQ{
 T_3\le T_3^\ci, \pq T_3<\I \implies |a^+_2(T_3)|\sim \de_2.}
Assuming $\de_D\le \de_2\ll\de_m^\star$, we may apply Lemma \ref{lem:diff 2sol} to the difference of $u':=u^\ci-u$ for any $\de_3\in(0,\de_m^\star]$ satisfying 
\EQ{ \label{init diff}
 \eN(D_{z(0)})+\|v\|_{L^\I_t(0,T_3;\cH)}+\|\vec u'(0)\|_\cH \le \de_3, \pq 
 \de_3\|\vec u'(0)\|_\cH  \le \de_m^\star|a^{+'}_1(0)|.}
We may choose $\de_3\sim\de_2$. Then $T_3\le T_3^\ci$ implies \eqref{a priori cond a'} for $t<T_3$, since otherwise $|a^{+\ci}|\ge|a^{+'}|-|a^{+}|\sim\de_3^{1/3}\gg\de_2$ and so $T_3^\ci<T_3$. Note that the orthogonality does not matter thanks to Theorem \ref{thm:dyn near Ksol}-\eqref{est T1}. 
Thus we obtain for $t<T_3$ from the lemma  
\EQ{
 \pt |e^{-m}a^{+'}_1-a^{+'}_1(0)| \le \tf14|a^{+'}_1(0)|, \pq |e^{-m}a^{+'}-a^{+'}(0)| \le \tf14|a^{+'}(0)|,  
 \pr \de_3\|\vec u'\|_\cH \le C_m\de_m^\star|a^{+'}_1|, 
 \pr \|P^\xu_\perp(z)\vec u'\|_\cH \le C_me^{C_m \de^{1/3} m}\|\vec u'(0)\|_\cH + C_m(\de_3+|a^{+'}|)|a^{+'}|.}
If $T_3=\I$ then $|a^{+'}_1|\sim |e^m a^{+'}_1(0)|\to\I$ as $t\to\I$ is contradicting that $\|v\|_\cH+\|v^\ci\|_\cH\ll 1$ for all $t<\I=T_3=T_3^\ci$. Hence $T_3<\I$. 

Starting from $t=T_3<\I$, we apply Lemma \ref{lem:diff col} to the difference $u'=u^\ci-u$ with some $\de_4\in(0,\de_Y^\star]$ satisfying 
\EQ{
 \pt \sN_0(\vec u(T_3),z(T_3))+\sN_0(\vec u^\ci(T_3),z(T_3))\le \de_4, \pq \eN(D_{z(T_3)}) \le \de_4^{C_X}, \pr \de_4\|\vec u'(T_3)\|_\cH \le \de_Y^\star|p^+(z_k)\vec u'(T_3)|.}
We may choose $\de_4\sim\de_2$ assuming
\EQ{
 \de_D \ll \de_2^{C_X}, \pq \de_2\ll\de_Y^\star.}
Note that $\eN(D_z)\lec\eN(D_{z(0)})$ for $t\le T_3$ by Theorem \ref{thm:dyn near Ksol} with \eqref{Dz 2sol}. 

For the open set $\ti I\subset\R$ given by the lemma, we have $|\ti I|\lec|\log\de_4|\sim|\log\de_2|$, 
and for each $T_4\in[T_3,T_4^*]\setminus \ti I$ with 
\EQ{
 T_4^*:=T_3+T_{z(T_3)}^\star,}
we have the decomposition $\{1,2\}=\La_0(T_4)\cup\La_-(T_4)=\La_0^\ci(T_4)\cup\La_-^\ci(T_4)$. $\La_+(T_4)=\La_+^\ci(T_4)=\empt$ since we assumed that both $u,u^\ci$ are global. 

Now suppose that both $u,u^\ci$ keep the $Q_1$ soliton, namely $1\in\La_0(T)\cap\La_0^\ci(T)$ for all $T\in[T_3,\I)\setminus\ti I$. Then $|a^+_1(T_3)|+|a^{+\ci}_1(T_3)|\lec\de_2^2$ and
\EQ{
 |a^{+'}_2(T_3)| \lec \de_3^{-1}|a^{+'}_1(T_3)| \lec \de_3^{-1}\de_2^2.}
Hence choosing $\de_3\ge C\de_2$ for a sufficiently large constant $C$, we may deduce that 
$|a_2^+(T_3)|\sim\de_2$ implies $|a_2^{+\ci}(T_3)|\sim\de_2$, 
so $2\in\La_-(T)\cap \La_-^\ci(T)$. 

Thus we obtain $\La_\dia(T_4)=\La_\dia^\ci(T_4)$ for all $\dia=0,\pm$, thereby the assumptions of Lemma \ref{lem:diff col} are fulfilled. Then the estimates of the lemma imply
\EQ{
 \pt a^{+'}_1(T_4) \sim e^{\nu^+(T_4-T_3)}a^{+'}_1(T_3), 
 \pq \de_4\|\vec u'(T_4)\|_\cH \lec |a^{+'}_1(T_4)| \lec \de_4^2.}
Moreover, since $\{1\}=\La_0(T)$ and $\{2\}=\La_-(T)$ for all $T\ge T_4$, we have a unique $z_1\in C^1([T_4,\I);\R^N)$ such that 
\EQ{
 \pt v:=\vec u-\s_1\vec Q(x-z_1) \in\Y^\xc_\perp(z_1),
 \pq |a^+(z_1)v| \lec \de_4^2, \pq \|P^\xcu_\perp(z_1)v\|_\cH\lec \de_4^4.}
Let $a_1^+:=p^+(z_1)v$, $a_1^{+\ci}:=p^+(z_1)v^\ci$ and $a_1^{+'}:=p^+(z_1)v'$ as before. 
Then we may apply Lemma \ref{lem:diff 1sol} to the difference $u'=u^\ci-u$ starting from $t=T_4$, with some $\de_5\in(0,\de_m^\star]$ such that
\EQ{
 \|v\|_{L^\I_t(T_4,\I;\cH)} \le \de_5, \pq \|\vec u'(T_4)\|_\cH \le \de_5,
 \pq \de_5\|\vec u'(T_4)\|_\cH \le \de_m^\star|a^{+'}_1(T_4)|.}
The above estimates allow us to choose $\de_5\sim\de_4\sim\de_2$. 
Then for $t\ge T_4$ and as long as $|a_1^{+'}|^3<\de_5$, we have
\EQ{
 \pt a^{+'}_1 \sim e^{\nu^+(t-T_4)}a^{+'}_1(T_4), \pq \de_5\|\vec u'\|_\cH \lec |a^{+'}_1|,
 \pr \|P^\xc_\perp(z)\vec u'\|_\cH \lec e^{C_m\de_5^{1/3}\nu^+(t-T_4)}\|\vec u'(T_4)\|_\cH + (\de_5+|a^{+'}_1|)|a^{+'}_1|.}
Then we have $|a^{+\ci}_1|\gg\|v^\ci\|_\cH^2$ after some time and before $|a_1^{+'}|$ reaches $\de_5^{1/3}$, so Theorem \ref{thm:dyn near Ksol}-\eqref{est T1} implies that $u^1$ exits the neighborhood of $Q_1$. 
Thus we conclude that it is impossible for both $u,u^\ci$ to keep the $Q_1$ part with the initial difference \eqref{init diff}, as well as all the size assumptions on the parameters. 

By contraposition, if both $u,u^\ci$ keep the $Q_1$ part, then under the closeness to the $2$-soliton and the distance condition, 
\EQ{
 \sN_0(\vec u(0),z)^{1/2}+\sN_0(\vec u^\ci(0),z)^{1/2} \le \de_2 \ll \min(\de_2^\star,\de_m^\star,\de_Y^\star), \pq \eN(D_z) \le \de_2^{C_X+1}}
we have, cf.~\eqref{init diff}, 
\EQ{
 |a^{+'}_1(0)| \lec \de_2\|\vec u'(0)\|_\cH.}
Thus we have obtained 
\begin{theorem} \label{thm:1sol Lip}
For any $N\in\N$, $\al\in(0,\I)$ and $p\in(2,p^\star(N))$, 
there exist $\de_L^\star\in(0,1)$ and $C_L\in(1,\I)$  such that the following hold. Let $u^0,u^1$ be two solutions of \eqref{NLKG} in $\cH$ satisfying initially for some $\s\in\{\pm 1\}^2$, $z\in(\R^N)^2$, and $\de\in(0,\de_L^\star]$, 
\EQ{
 \sN_0(\vec u^n(0),z) \le \de^2, \pq \eN(D_z) \le \de^{C_L}, \pq \s_1\s_2=-1,}
and asymptotic solitons with $Q_1$, namely 
\EQ{
 \lim_{t\to\I}\|\vec u^n(t)-\s_1\vec Q(x-z_1^n(t))-\te_n \s_2\vec Q(x-z_2^n(t))\|_\cH =0}
for some $z^n:[0,\I)\to(\R^N)^2$ and $\te_n\in\{0,1\}$ for each $n=0,1$. Then the initial difference $\vec u'(0):=\vec u^1(0)-\vec u^0(0)$ satisfies 
\EQ{ \label{Lip est}
 |p^+(z_1)\vec u'(0)| \le C_L\de \|\vec u'(0)\|_\cH.}
\end{theorem}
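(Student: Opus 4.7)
The plan is to argue by contraposition, chaining together the three difference estimates established earlier: Lemma \ref{lem:diff 2sol} (near repulsive 2-solitons), Lemma \ref{lem:diff col} (during the collapse of one soliton), and Lemma \ref{lem:diff 1sol} (near a 1-soliton). The target is to show that if $|p^+(z_1)\vec u'(0)|$ is too large relative to $\de\|\vec u'(0)\|_\cH$, then the unstable mode $a^{+'}_1$ attached to $Q_1$ grows exponentially in time and retains its dominant direction along every segment of the evolution, eventually forcing the two solutions to leave the $\s_1\vec Q$-neighborhood — contradicting the assumption that both are asymptotic to solitons including $Q_1$.

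Concretely, suppose for contradiction that $|a^{+'}_1(0)| > C_L\de\|\vec u'(0)\|_\cH$ with $C_L$ large. Apply Theorem \ref{thm:dyn near Ksol} (with $K=2$, $\s_1\s_2=-1$, so that the strong repulsivity \eqref{repul strong} and the 2-soliton symmetry of Section \ref{sect:even decay} are available) to both $u^0$ and $u^1$ with $\de_2\sim\de$. By symmetry relabel so that $u^0$ reaches its exit time $T_3$ first, and so that, if $T_3<\infty$, then $|a^+_2(T_3)|\sim\de_2$ is the mode that becomes unstable. Lemma \ref{lem:diff 2sol} applies on $[0,T_3]$ (the a-priori condition \eqref{a priori cond a'} is automatic, since otherwise $|a^{+\ci}_2|$ would exceed $\de_2$, contradicting $T_3\le T_3^\ci$), giving $|a^{+'}_1(t)|\sim e^{m(t)}|a^{+'}_1(0)|$ with $\dot m\sim\nu^+$, together with $\de\|\vec u'\|_\cH\lec |a^{+'}_1|$ throughout. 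In particular $T_3=\infty$ is already impossible, since then $|a^{+'}_1|\to\infty$ while both solutions remain close to the 2-soliton (so $\|\vec u'\|_\cH$ is bounded).

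Thus $T_3<\infty$. At this moment, as observed in Section \ref{sect:diff 2-to-1}, the fact that $|a_1^+(T_3)|+|a^{+\ci}_1(T_3)|\lec\de^2$ (both solutions keep $Q_1$) combined with the preserved direction $|a^{+'}_1|\gec \de\|\vec u'\|_\cH$ forces the mode 2 components $a^+_2(T_3)$ and $a^{+\ci}_2(T_3)$ to have the same sign and comparable size $\sim\de$. Hence the classifications produced by Theorem \ref{thm:col Ksol} for $u^0$ and $u^1$ satisfy $\La_0(T)=\La_0^\ci(T)=\{1\}$ and $\La_-(T)=\La_-^\ci(T)=\{2\}$ (with $\La_+=\empt$, since neither solution blows up) for all $T\in(T_3,T_4^*]\setminus\ti I$. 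Then Lemma \ref{lem:diff col} applies on this interval and propagates both the exponential growth of $a^{+'}_1$ through the collapse and the comparability $\de\|\vec u'\|_\cH\lec|a^{+'}_1|$.

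At time $T_4\notin\ti I$ we are back in the setting of a single stable soliton $\s_1\vec Q$, with $\de\|\vec u'(T_4)\|_\cH\lec|a^{+'}_1(T_4)|$, and both solutions remain close to this 1-soliton by hypothesis. Lemma \ref{lem:diff 1sol} then gives $|a^{+'}_1(t)|\sim e^{\nu^+(t-T_4)}|a^{+'}_1(T_4)|$ for all $t\ge T_4$ as long as $|a^{+'}_1|^3\le\de$; once it reaches $\de^{1/3}$, the comparability $\de\|\vec u'\|_\cH\lec |a^{+'}_1|$ combined with Theorem \ref{thm:dyn near Ksol}-\eqref{est T1} forces $u^1$ to exit the $\s_1\vec Q$-neighborhood, contradicting the asymptotic assumption. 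Tracking constants through this chain yields $|a^{+'}_1(0)|\lec \de\|\vec u'(0)\|_\cH$, with the implicit constant determined by the smallness parameters $\de_2^\star$, $\de_m^\star$, $\de_Y^\star$; this is the desired \eqref{Lip est} after choosing $C_L$ and the exponent in $\eN(D_z)\le\de^{C_L}$. The main obstacle is step three, ensuring that the two collapse patterns $\La_\dia(T)$ and $\La_\dia^\ci(T)$ actually agree: this is where the sign-and-direction preservation produced by the special repulsive 2-soliton analysis of Section \ref{sect:even decay} (through the bound on $\M^{(1)}$ in \eqref{est M 2sol}) is indispensable, since without it the potential $O(1/t)$ rotation between $a^+_1$ and $a^+_2$ could scramble which soliton collapses in $u^1$ versus $u^0$.
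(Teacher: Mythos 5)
Your proposal is correct and follows essentially the same route as the paper's argument in Section \ref{sect:diff 2-to-1}: contraposition, then chaining Lemma \ref{lem:diff 2sol} on $[0,T_3]$ (with $T_3=\I$ excluded by the exponential growth of $a^{+'}_1$), Lemma \ref{lem:diff col} through the collapse of $Q_2$ (with the matching of $\La_\dia$ and $\La_\dia^\ci$ obtained exactly as you describe, from $|a^{+'}_2(T_3)|\lec\de^{-1}|a^{+'}_1(T_3)|\lec\de$ being small relative to $|a^+_2(T_3)|\sim\de_2$ together with globality ruling out $\La_+$), and finally Lemma \ref{lem:diff 1sol} plus Theorem \ref{thm:dyn near Ksol}-\eqref{est T1} to force $u^1$ out of the $\s_1\vec Q$ neighborhood. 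No gaps; this matches the paper's proof.
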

Note that the modulation of centers does not essentially affect the estimate \eqref{Lip est}, so we do not need to assume the orthogonality for either of $u^n$. 

When $C_L\de>0$ is small enough, the unstable component on the left side may be removed from the right side. In other words, 
\EQ{
 |p^+(z_1)\vec u'(0)| \lec C_L\de[|p^+(z_2)\vec u'(0)|+\|P^\xu_\perp(z)\vec u'(0)\|_\cH].}
This means that $p^+(z_1)\vec u(0)$ is uniquely determined by the other components: $p^+(z_2)\vec u(0)$ and $P^\xu_\perp(z)\vec u(0)$, among asymptotic solitons with $Q_1$ starting in a small neighborhood of $\vec Q_\Si$.  Moreover, the dependence is Lipschitz continuous with the smaller Lipschitz constant in the smaller neighborhood.

In the special case where both $u^n$ are asymptotic $2$-solitons, namely $\te_0=\te_1=1$, the above theorem applies as well after switching $Q_1$ and $Q_2$, thereby we obtain 
\EQ{
 |p^+(z)\vec u'(0)| \lec C_L\de\|P^\xu_\perp(z)\vec u'(0)\|_\cH.}
This is the uniqueness and Lipschitz dependence of the asymptotic $2$-solitons, which was proven in \cite{CMYZ}.

\section{Dynamical classification around $2$-solitons}
We are now ready to construct the manifolds of asymptotic $1$-solitons, near and approaching that of asymptotic $2$-solitons, and simultaneously to classify the full-time dynamics of all solutions in the small neighborhood of $2$-solitons. 

\subsection{Coordinate around $2$-solitons}
Let $K=2$, $z^*\in(\R^N)^2$, $\s\in\{\pm 1\}^2$ satisfy
\EQ{
 D_{z^*} \ge C_D, \pq \s_1\s_2=-1,}
for some constant $C_D\in(1,\I)$ to be chosen large. 
Let 
\EQ{
 \vec Q^*_k:=\s_k \vec Q(x-z^*_k), \pq \vec Q^*_\Si:=\sum_{k=1}^2 \vec Q^*_k,}
and consider the solution $u$ of \eqref{NLKG} with any initial data $\vec u(0)\in\cH$ satisfying 
\EQ{ \label{init u}
 \|\vec u(0)-\vec Q_\Si^*\|_\cH \ll 1.}
Decompose the initial data $\vec u(0)\in\cH$ as before
\EQ{
 \U v:=\vec u(0)-\vec Q_\Si^*, \pq \U a^+_k := p^+(z_k^*)\U v, \pq \U\be:= P^\xu_\perp(z^*)\U v,}
for $k=1\etc K$. Lemma \ref{lem:spec coord} allows us to use, choosing $C_D\ge D_\star$,
\EQ{
 \U v\in\cH \pt\leftrightarrow (\U a^+,\U\be) \in \R^K \oplus \Y^\xu_\perp(z^*),}
as coordinates for the initial data $\vec u(0)$. We consider initial data in the region 
\EQ{ \label{abe neigh}
 (\U a^+,\U\be)\in \R^2\oplus\Y^\xu_\perp(z), \pq |\U a^+|\le\de_\he, \pq \|\U\be\|_\cH\le\de_\he,}
for some constant $\de_\he\in(0,1)$ to be chosen small. 

We will impose several size conditions on $C_D$ and $\de_\he$. All of them are satisfied by choosing sufficiently small $\de_\he$, depending only on $N,\al,p$, and sufficiently large $C_D$, depending only on $N,\al,p,\de_\he$.

\subsection{Existence of $2$-solitons} \label{ss:exist 2sol}
First we recall the existence of $2$-solitons from \cite{CMYZ} in our notation. 
The proof is by a simple topological argument and had been worked out for other equations without damping as well. 

Fix any $z^*\in(\R^N)^2$ and $\U \be\in\Y^\xu_\perp(z^*)$ satisfying the above conditions. 
Imposing $\eN(C_D)^{1/2} \ll\de_\he \ll\de_2^\star$, we have 
\EQ{
 \U\sN_0(\vec u(0)) \le \sN_0(\vec u(0),z^*) \lec \de_\he^2 + \eN(C_D) \sim \de_\he^2 \ll |\de_2^\star|^2.}
Apply Theorem \ref{thm:dyn near Ksol} with some $\de_2>0$ satisfying $\de_\he\ll\de_2\le\de_2^\star$ to the solution $u$ with any initial data in the region \eqref{abe neigh}. 
Suppose for contradiction that the theorem yields $T_3<\I$ for every such $u$.

Let $z\in C^1([0,T_3];(\R^N)^2)$ be given by the theorem with the orthogonality $v=\vec u-\vec Q_\Si\in\Y^\xc_\perp(z)$. The theorem implies $\|v\|_\cH\lec\de_2$ and 
$\eN(D_z)\lec\de_\he^2$ for $t\le T_3$. 

Consider another decomposition $\U{v}(t)=\vec u(t)-\vec Q^*_\Si$ with $z^*(t)$ solving \eqref{modeq z} with the initial data $z^*(0)=z^*$. 
Then $p^j(z_k^*)\U{v}$ are conserved for all $j=1\etc N$ and $k=1\etc K$, and Lemma \ref{lem:center} implies, if $\de_\he^2 \ll \eN(1/\de_I)$ and $\de_2\ll\de_I$, then for $t\le T_3$
\EQ{
 \pt |z(t)-z^*(t)|\lec|\U{a}^\xc|\lec\de_\he, 
 \pr |\U a^+(t)-a^+(t)| \lec |z^*(t)-z(t)|[\|\U{v}(t)\|_\cH+\eN(D_z)]\ll\de_\he. }
Let $T\ge 0$ be the first time where $|\U a^+(T)|=\de_\he$. 
Theorem \ref{thm:dyn near Ksol} implies that $\de_\he\ll\|v(t)\|_\cH\ll\de_2$ at some $t\in[0,T_3)$, and also $\|v\|_\cH\sim|a^+|$ at any such $t$. 
Hence the above estimate implies that $T\in[0,T_3)$. 
Moreover, $\sN_0(\vec u(T),z(T))\lec\de_\he^2\ll\de_\he\sim|a^+(T)|$ implies $T\ge T_1$. 
Thus we obtain a mapping defined on the closed ball:
\EQ{
 \X: \U a^+ \in B(\de_\he):=\{a\in\R^2\mid |a|\le\de_\he\} \mapsto \U a^+(T) \in \p B(\de_\he).}
Note that $\U a^+$ also satisfies \eqref{eq a+ simp}, namely
\EQ{
 |(\p_t-\nu^+)\U a^+| \lec \eN(D_{z^*})+\|\U{v}\|_\cH^2,}
because $z^*$ solves the equation \eqref{modeq z}. Hence we have around $t=T$ 
\EQ{
 \p_t|\U a^+|^2 = 2\nu^+|\U a^+|^2 + O(\de_\he^3) \sim \de_\he^2>0.}
This uniform growth of $|\U a^+|$ around $t=T$, together with the local well-posednesss of the Cauchy problem, implies that the mapping $\X$ is continuous. 
Hence $\X$ is a retraction of $B(\de_\he)$ onto $\p B(\de_\he)$, but such a mapping does not exist, which is a contradiction.  

Hence there exists at least one $\U a^+\in\R^2$ such that Theorem \ref{thm:dyn near Ksol} yields $T_1=T_2=T_3=T^*=\I$, namely the solution $u$ staying forever near $Q_\Si$. 
Then the strong repulsivity \eqref{repul strong} for $K=2$ with $\s_1\s_2=-1$, or the estimate \eqref{dyn V} for $t<T_s=\I$, implies that $D_z\to\I$ as $t\to\I$. 
Then Lemma \ref{lem:conv Ksol} implies that $u$ is indeed an asymptotic $2$-soliton. 

The above argument may be applied to the more general $K$-soliton case under the soliton repulsivity condition \eqref{sol repul} to show existence of a solution $u$ staying forever near $Q_\Si$. If we have the strong repulsivity condition \eqref{repul strong}, then it is an asymptotic $K$-soliton by Lemma \ref{lem:conv Ksol}. 

\subsection{Dynamics around the $2$-soliton}
We have seen above that if $\de_\he>0$ is small enough and $C_D>1$ is large enough, then for any $z^*\in(\R^N)^2$ with $D_{z^*}\ge C_D$ and any $\U\be\in\Y^\xu_\perp(z^*)$ with $\|\U\be\|_\cH\le\de_\he$, there is at least one choice of $\U a^+\in \R^2$ with $|\U a^+|\le\de_\he$ such that the corresponding solution $u$ is an asymptotic $2$-soliton. 
Moreover, by the result of \cite{CMYZ} or Theorem \ref{thm:1sol Lip}, such $\U a^+$ is unique, provided that $\de_\he\ll\de_L^\star$ and $C_D\gg C_L|\log \de_L^\star|$. Moreover, it is given by some Lipschitz function $G$ in the form 
\EQ{ \label{graph 2sol}
 \U a^+ = G(\U \be).}
This has been established in \cite{CMYZ}, so the main task in this paper is to investigate the other solutions, where we need to distinguish the direction of instability $a^+$ in $\R^2$. 
Consider a region of initial data in the form
\EQ{ \label{rect region}
  (\U a^+,\U\be)\in \R^2\oplus\Y^\xu_\perp(z), \pq \max(|\U a^+_1|,|\U a^+_2|)\le\de_\he/\sqrt{2}, \pq \|\U\be\|_\cH\le\de_\he.}
Let $u^\ci$ be another solution with initial data in this region
satisfying for some $\ta\in\{\pm 1\}^2$
\EQ{
 \ta_k\s_k\U a^{+'}_k > \tf12 \|\U\be'\|_\cH \pq(k=1,2),}
where $u':=u^\ci-u$, $\U a^{+'}_k:=p^+(z^*)\vec u'(0)$ and $\U\be':=P^\xc_\perp(z^*)\vec u'(0)$. 
Denote the set of such initial data by $A_{\ta_1,\ta_2}$. 
We may apply Lemma \ref{lem:diff 2sol} with a small constant $\de\in(0,\de_m^\star]$ for both $k=1,2$, provided that $\de_\he+\eN(C_D)^{p_3}\ll\de_m^\star$. 
Since $u$ is an asymptotic 2-soliton, Theorem \ref{thm:dyn near Ksol} together with \eqref{Dz 2sol} implies
\EQ{
 |a^+| \lec \|v\|_\cH^2 \le \de_\he^2, \pq \eN(D_z)\lec\eN(C_D)}
for all $t\ge 0$, hence after sufficient time, the exponential growth given by the lemma implies 
\EQ{
 |e^{-m} a^{+\ci}_k-a^{+'}_k(0)| \le \tf14|a^{+'}_k(0)|, \pq \|P^\xu_\perp(z)\vec u^\ci\|_\cH \lec (\eN(C_D)^{p_3}+\de_\he)|a^{+'}|,}
for both $k=1,2$. So $a^{+\ci}_k$ has the same sign as $a^{+'}_k(0)$ with the dominant size. 
Then Theorem \ref{thm:col Ksol} implies 
\begin{enumerate}
\item If $+1\in\{\ta_1,\ta_2\}$ then $u^\ci$ blows up. 
\item If $\ta_1=\ta_2=-1$ then $u^\ci$ is global and decaying, 
\end{enumerate}
provided that $\de_\he^2\ll\de_X^\star$ and $C_D\gg C_X|\log\de_X^\star|$. 
If we fix $\U\be^\ci=\U\be$, then the above regions $A_{\ta_1,\ta_2}$ are reduced to four conical regions of $\U a^{+\ci}\in\R^2$ around the point of the asymptotic 2-soliton $\U a^+=G(\U\be)\in\R^2$, which 
are separated by the horizontal and vertical segments emanating from $G(\U\be)$. 

Take any horizontal or vertical segment away from $G(\U\be)$ that is connecting $A_{-,-}$ and either $A_{+,-}$ or $A_{-,+}$, and consider all the solutions $u^\ci$ starting from the initial data on the segment. 
By symmetry, it suffices to consider the case of $A_{-,-}\to A_{+,-}$. 

For all the initial data on the segment, Theorem \ref{thm:col Ksol} implies $2\in\La_-^\ci(T)$ for some $T>0$. If $1\in\La_+^\ci(T)$ then $u^\ci$ blows up, which is the case around the endpoint in $A_{+,-}$. If $1\in\La_-^\ci(T)$ then $u^\ci$ is global and decaying, which is the case around the endpoint in $A_{-,-}$. Since both cases are stable for initial perturbation by the theorem, there must be another case, that is when $u^\ci$ is an asymptotic $1$-soliton with the $Q_1$ side. 

If there are more than one point of the asymptotic soliton case on the segment, then their initial difference is dominated by the $\U a^+_1$ component, which is contradicting Theorem \ref{thm:1sol Lip}, provided that $\de_\he\ll\de_L^\star$ and $C_D\gg C_L|\log\de_L^\star|$. 
Hence there is a unique point on the segment corresponding to the $Q_1$ soliton, and the remainder is separated to the interval of blow-up and the other of decaying solutions. 
Thus we obtain a set of initial data for $u^\ci$ that are converging to the $Q_1$ solitons in the graph form of a Lipschitz function
\EQ{
 \U a^+_1 = H_1(\U a^+_2,\U\be), \pq \s_2(\U a^+_2 - G_2(\U\be))<0,}
within the small neighborhood \eqref{rect region}. 
By symmetry, we also obtain the graph for the $Q_2$ solitons in the form 
\EQ{
 \U a^+_2 = H_2(\U a^+_1,\U\be), \pq \s_1(\U a^+_1 - G_1(\U\be))<0.}
The uniqueness of $1$-solitons (including $2$-solitons) by the theorem implies
\EQ{
 H_1(G_2(\U\be),\U\be)=G_1(\U\be), \pq H_2(G_1(\U\be),\U\be)=G_2(\U\be),}
thereby connecting the graphs of asymptotic $Q_1$ solitons and those of $Q_2$ at the point $G(\be)\in\R^2$ of the asymptotic $2$-soliton. 

All the other solutions in the neighborhood are either blowing-up or decaying to $0$, and both are stable for initial perturbation, by Theorems \ref{thm:dyn near Ksol} and \ref{thm:col Ksol}. 
Since the square region $[-\de_\he,\de_\he]^2/\sqrt{2}$ of $a^{+}$ is separated into two connected (relatively) open sets by the graphs of $H_1$ and $H_2$ that are connected at the point of $G$, and each open set should be included entirely either in the blow-up case or in the decaying case. 
Since one of them contains $A_{+,+}$ and the other contains $A_{-,-}$, we may determine the cases. 
To write it explicitly, it is convenient to extend the graph to the other side as constants:
\EQ{
 \pt \s_2(\U a^+_2 - G_2(\U\be))\ge 0 \implies H_1(\U a^+_2,\U\be) := G_1(\U\be),
 \pr \s_1(\U a^+_1 - G_1(\U\be))\ge 0 \implies H_2(\U a^+_1,\U\be) := G_2(\U\be).}
Then the solution $u^\ci$ is decaying to $0$ in the region of initial data
\EQ{
 \s_1(\U a^+_1-H_1(\U a^+_2,\be)) < 0 \tand \s_2(\U a^+_2-H_2(\U a^+_1,\be)) < 0,}
while $u^\ci$ is blowing up in the region of initial data
\EQ{
 \s_1(\U a^+_1-H_1(\U a^+_2,\be)) > 0 \tor \s_2(\U a^+_2-H_2(\U a^+_1,\be)) > 0,}
both within the small neighborhood \eqref{rect region}. 
This concludes the proof of Theorem \ref{thm:main}.  

\section{Multi-solitons with symmetry} \label{sect:sym Ksol} 
In this section, we consider $K$-solitons of $K\ge 3$ under some symmetry restrictions which ensure the strong repulsivity condition \eqref{repul strong}. 
We may consider in general symmetries with respect to any subgroup of the orthogonal group $O(N)$ on $\R^N$, but for simplicity we consider just two examples generated by reflections and permutations.

Let $G_R\subset O(N)$ be the subgroup generated by the reflections $R_j:x_j\mapsto-x_j$ for $j=1\etc N$. For any non-empty $I\subset\{1\etc N\}$, 
let $G_I\subset O(N)$ be the subgroup generated by the exchanges $X_{j,k}:x_j\mapsto x_k$ for $j,k\in I$. We consider the symmetry generated by them, namely the subgroup
\EQ{
 G:=G_R\rtimes G_I \subset O(N).}

\subsection{Odd symmtry}
Let $I\subset \{1\etc N\}$, $\ta:I\to\{\pm 1\}$ and $I_\pm:=\ta^{-1}(\{\pm 1\})$. 
Assume that $|I_+|=|I_-|\ge 1$. 
Let $G=G_R\rtimes G_I$ be as above, and let $\ta_*:G\to\{\pm 1\}$ be the unique group homomorphism satisfying 
\EQ{
 \ta_*(R_j)=1, \pq \ta_*(X_{j,k})=\ta(j)\ta(k).}

We consider the equation \eqref{NLKG} under the symmetry restriction
\EQ{
 \cH_G^\ta := \{\fy\in\cH \mid \forall g\in G,\ \fy(gx)=\ta_*(g)\fy(x)\}.}
We have a one-parameter family of $2|I|$-solitons in this subspace
\EQ{
 \cQ_\ta(r) := \sum_{i\in I} \ta(i)[Q(x-re_i)+Q(x+re_i)]}
for $r\in\R$. The number of solitons $2|I|=4|I_+|$ is at least $4$. 
The case of $2|I|=4$ is included in the asymptotic solitons constructed in \cite{F}, which are in the shape of regular polygons on a plane. 
$\cQ_\ta(r)$ satisfies the strong repulsivity condition \eqref{repul strong} for $r\gg 1$. Indeed, 
\EQ{
 V(\cQ_\ta(r)) \pt= -\tf12\sum_{i,j\in I}^{i\not=j}\sum_{\ta_1,\ta_2\in\{\pm 1\}} \ta(i)\ta(j)\eN_0(r|\ta_1e_i-\ta_2e_j|)
 \pn- \sum_{i\in I} \eN_0(2r)
 \pr=|I|(\tf12\eN_0(\sqrt{2}r)-\eN_0(2r)) \sim  |I|\eN(D_z),}
because of the exponential decay of $\eN_0$, and similarly for each $i\in I$, 
\EQ{
 \na_{z_i^\pm} V(\cQ_\ta(r)) \pt= -\sum_{j\in I\setminus\{i\}} \sum_{\ta_1=\pm 1}\ta(i)\ta(j)\tf{\ta_1 e_i}{\sqrt{2}}\eN_0'(\sqrt{2}r) \pm e_i \eN_0'(2r)
 \pr= \pm e_i[\tf{-1}{\sqrt{2}}\eN_0'(\sqrt{2}r)+\eN_0'(2r)] \sim \pm e_i \eN(D_z),}
where $z_i^\pm$ is the coordinate of the soliton at $x=\pm re_i$. 

Moreover, the reflection symmetry implies that $\dot z_i^\pm$ is in the $\pm e_i$ direction even in the full equation depending on the solution $u$ in $\cH_G^\ta$, and the permutation symmetry implies that the force is the same at all the soliton centers. In other words, the equation of centers \eqref{eq z asy} is reduced to the form
\EQ{
 \dot z_i^\pm = \pm e_iF(r,v), \pq F(r,v) \sim \eN(D_z)+O(\eN(D_z)^{p_1}+\|v\|_\cH^2).}
The expansion around the soliton is essentially the same at every point, upto a change of the sign. In particular, we have the unstable modes
\EQ{
 a^+_{i,+} = a^+_{i,-} = \ta(i)\ta(j)a^+_{j,\pm}.}

Therefore, the dynamical analysis around $\cQ_\ta(r)$ of $r\gg 1$ in the symmetry $\cH_G^\ta$ is even easier than the general $2$-soliton case. 
In particular, we can prove, by the same argument as above, existence of asymptotic $2|I|$-solitons, which makes a codimension-$1$ Lipschitz manifold, separating the rest of small neighborhood into decaying and blow-up solutions. 
To state the precise result, let
\EQ{  \label{def YG}
 \pt \Y_\perp^\xu(r;\de)_G^\ta := \{\fy\in H_G^\ta \mid \forall i\in I,\ \om(\fy,Y^+(x\mp re_i))=0,\ \|\fy\|_\cH<\de\}.}
\begin{prop}
There exists a small constant $\de=\de(N,\al,p)>0$ such that for any $r>1/\de$, non-empty $I\subset \{1\etc N\}$ and $\ta:I\to\{\pm 1\}$ satisfying $|\ta^{-1}(\{+1\})|=|\ta^{-1}(\{-1\})|$,  
there is a Lipschitz function $H:\Y_\perp^\xu(r;\de)_G^\ta\to(-\de,\de)$ with the following properties. 
For any $h\in(-\de,\de)$ and $\fy\in\Y_\perp^\xu(r;\de)_G^\ta$, 
let $u$ be the solution of \eqref{NLKG} with the initial data
\EQ{ 
 \vec u(0) = \sum_{i\in I}\sum_{\ta=\pm 1} \ta(i)[\vec Q+h Y^+](x-\ta re_i)  + \fy \in \cH_G^\ta.}
Then its global behavior is classified by the initial data as follows. 
\begin{enumerate}
\item If $h<H(\fy)$ then $u$ is global and decaying.
\item If $h=H(\fy)$ then $u$ is global and 
\EQ{
 \vec u(t) = \sum_{i\in I}\sum_{\ta_1=\pm 1} \ta(i)\vec Q(x-\ta_1 r(t)e_i) +o(1) \IN{\cH}} 
as $t\to\I$ for some $r:[0,\I)\to(0,\I)$ satisfying $r(t)\to\I$, namely an asymptotic $2|I|$-soliton.
\item If $h>H(\fy)$ then $u$ blows up in finite time. 
\end{enumerate}
\end{prop}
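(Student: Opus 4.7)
The plan is to reduce to a scalar version of the topological and difference-estimate machinery of Sections 6--12, made possible by the fact that the symmetry group $G$ forces the $2|I|$-dimensional unstable subspace attached to $\cQ_\ta(r)$ to collapse to a single real direction. Concretely, for any $u \in \cH_G^\ta$ in a small neighborhood of $\cQ_\ta(r)$, the orthogonality conditions and the modulation of Lemma \ref{lem:center} are $G$-invariant, so the centers $\{z_i^\pm\}$ remain in the symmetric configuration parametrized by a single $r \in \R$; likewise $\ap^+_{i,+}(z)\vec u = \ap^+_{i,-}(z)\vec u = \ta(i)\ta(j)\ap^+_{j,\pm}(z)\vec u$, so a single scalar $h \in \R$ records the entire unstable component, and $\fy := \vec u - \cQ_\ta(r) - h \sum_{i,\pm} \ta(i) Y^+(x\mp re_i) $ lives in $\Y_\perp^\xu(r;\de)_G^\ta$ after modulation.

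Given the strong repulsivity \eqref{repul strong} already verified for $\cQ_\ta(r)$, Theorems \ref{thm:dyn near Ksol} and \ref{thm:col Ksol} apply to any solution starting in the described neighborhood. For fixed $\fy \in \Y_\perp^\xu(r;\de)_G^\ta$, denote by $u_h$ the resulting solution. By symmetry the exit time $T_3$, if finite, is triggered simultaneously at all $2|I|$ solitons, and the sign of $h(T_3)$ agrees with $\sign h(T_2)$ by the exponential growth in stage \eqref{last stage} of Theorem \ref{thm:dyn near Ksol}. Hence $\La_+(T)$ equals the full index set when $h(T_3) > 0$ and $\La_-(T)$ equals the full index set when $h(T_3) < 0$, so Theorem \ref{thm:col Ksol} yields blow-up in the first case and global decay in the second, and both regimes are stable under perturbation in $\cH_G^\ta$.

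To produce the threshold $h = H(\fy)$ separating these two regimes I would run the retraction argument of Section \ref{ss:exist 2sol} in its one-dimensional version: the map $h \mapsto \sign h(T_3) \in \{-1, +1\}$ is locally constant on $\{h \in [-\de_\he,\de_\he] : T_3 < \infty\}$ by the stability just noted, equals $+1$ near $h = +\de_\he$ and $-1$ near $h = -\de_\he$, and therefore cannot have the whole interval as its domain. The complementary set contains some $h = H(\fy)$ with $T_3 = \infty$; by \eqref{repul strong} and Lemma \ref{lem:conv Ksol} the corresponding $u_{H(\fy)}$ is an asymptotic $2|I|$-soliton. Uniqueness of $H(\fy)$ and Lipschitz dependence on $\fy$ then follow from a scalar analogue of Theorem \ref{thm:1sol Lip}: the difference $u' = u_{h'} - u_{h}$ of two candidate threshold solutions sharing the same $\fy$ satisfies a Lemma \ref{lem:diff 1sol}-type estimate in its single unstable direction, forcing exponential growth unless $h = h'$; applying the same estimate across different $\fy$'s yields $|H(\fy') - H(\fy)| \lec \de \|\fy' - \fy\|_\cH$.

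The main expected obstacle is bookkeeping rather than a new analytic phenomenon: the direction-preservation difficulty of Sections 7 and 9 that powered Theorem \ref{thm:main} is entirely absent here, since after symmetry reduction there is nowhere for the unstable mode to rotate, and in particular the inversion-based even-part estimate of Section \ref{sect:even decay} is not required. One does, however, need to check that the spectral decomposition of Lemma \ref{lem:spec coord} and the cut-offs $\chi_k$ of Section \ref{sect:collapse} can be made $G$-invariant, which can be arranged by symmetrizing each $\chi_k$ over the finite group $G$; this keeps every localized sub-solution $u_k$ inside $\cH_G^\ta$ and lets the conclusions of Theorem \ref{thm:col Ksol} be read off in the symmetric class. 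Verifying that the constant $\de$ is uniform in $r$, $I$, and $\ta$ then amounts to tracking the dependence on $K = 2|I|$ and $D_z \ge 2r$ in the constants of Theorems \ref{thm:dyn near Ksol}, \ref{thm:col Ksol}, and \ref{thm:1sol Lip}, which is routine since those constants depend only on $N,\al,p,K$ and on the absolute constant $C_V$ from \eqref{sol repul}.
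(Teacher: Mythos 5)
Your proposal is correct and follows essentially the same route as the paper: existence of the threshold solution via the topological (connectedness/retraction) argument of Section \ref{ss:exist 2sol}, uniqueness and Lipschitz continuity of $H$ via the difference estimate of Section \ref{sect:diff Ksol} trivialized to the $1$-soliton form of Lemma \ref{lem:diff 1sol} because the symmetry identifies all unstable modes up to sign, and the decay/blow-up dichotomy from the fact that the symmetry forces all $2|I|$ solitons to collapse simultaneously in the same direction, so neither Section \ref{sect:even decay} nor Section \ref{sect:diff col} is needed.
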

\begin{proof}
Since we have the strong repulsivity condition, the existence of the asymptotic $2|I|$-soliton follows in the same way as in Section \ref{ss:exist 2sol}. 
The difference estimates in Section \ref{sect:diff Ksol} is trivialized as in the $1$-soliton case, because all the unstable modes are the same (up to the sign) by the symmetry, which removes all the issues from the interaction among the unstable modes. 
Hence we have a difference estimate similar to Lemma \ref{lem:diff 1sol}, although the growth rate is affected by the soliton interactions as $e^{m(t)}$ in place of $e^{\nu^+t}$. 
We do not need the difference estimate around collapsing solitons in Section \ref{sect:diff col}. 
In other words, we obtain directly from Section \ref{sect:diff Ksol} a difference estimate similar to Theorem \ref{thm:1sol Lip} in the case where both $u^0,u^1$ are asymptotic $2|I|$-solitons. 
Thus we obtain the uniqueness and Lipschitz continuity of $H$. 
For the other solutions, it suffices to note that once the solution exits from the soliton neighborhood into the decaying region $\La_-(T)$, it happens to all the components by symmetry, so the solution is decaying. Thus we obtain the dichotomy into decaying and blow-up in the complement region. 
\end{proof}

Note that the permutation symmetry of the solitons $\cQ_\ta$ by $G_I$ is unstable in the sense that if the distance $2r$ between the pair of solitons $Q(x-re_i)+Q(x+re_i)$ on the $x_i$ axis depends on $i$, then those at the longer distance move the faster by the gradient flow $\dot z=-\na_z V(z)$. 
It is easy to see that the solutions of the gradient flow lead to soliton collisions in finite time unless the distance $r$ is independent of $i$. 
This is another instability of $\cQ_\ta$ (for less symmetric perturbations) besides the unstable modes of the individual solitons.

\subsection{Even symmetry}
Let $I\subset\{1\etc N\}$ be non-empty, and $G=G_R\rtimes G_I$ as before. We consider the equation \eqref{NLKG} under the symmetry restriction 
\EQ{
 \cH_G := \{\fy\in\cH \mid \forall g\in G,\ \fy(gx)=\fy(x)\}. }
We have a one-parameter family of solitons in this subspace 
\EQ{
 \cQ_0^I(r):= \sum_{i\in I}[Q(x-re_i)+Q(x+re_i)]-Q(x)}
for $r\in\R$. The number of solitons $2|I|+1$ is at least $3$. 
$\cQ_0^I$ satisfy the strong repulsivity condition \eqref{repul strong} for $r\gg 1$. Indeed, 
\EQ{
 V(\cQ_0^I(r)) \pt= -\tf12\sum_{i,j\in I}^{i\not=j}\sum_{\ta_1,\ta_2\in\{\pm 1\}} \eN_0(\sqrt{2}r)-\sum_{i\in I}\eN_0(2r) + \sum_{i\in I}\sum_{\ta=\pm 1}\eN_0(r)
 \pr= |I|\BR{(1-|I|)\eN_0(\sqrt{2}r) - \eN_0(2r) + 2\eN_0(r)} \sim |I|\eN(D_z),}
and similarly 
\EQ{
 \na_{z_i^\pm}V(\cQ_0^I(r)) \pt= 2(|I|-1)\tf{\pm e_i}{\sqrt{2}}\eN_0'(\sqrt{2}r)+\pm e_i(\eN_0'(2r)-\eN_0'(r))
 \pn\sim \pm e_i \eN(D_z),}
while the force at $x=0$ is obviously $0$. 

In the same way as in the previous section, under the symmetry $\cH_G$, the equation is reduced essentially to a $2$-soliton case:
\EQ{
 \dot z_i^\pm = \pm e_i F(r,v), \pq a^+_{i,+}=a^+_{j,\pm}, \pq \ga=g^*\ga,}
coupled with the soliton at $x=0$, which is not moving. 

Hence the construction of asymptotic $(2|I|+1)$-solitons works in the same way, but the dynamical analysis around them is far more complicated: 
If the soliton at the origin collapses earlier, then the repulsivity condition is lost for the remaining solitons, and they start attracting each other. 

To state the precise result on the asymptotic solitons, let
\EQ{ 
 \Y_\perp^\xu(r;\de)_G := \{\fy\in \Y_\perp^\xu(r;\de)_G^1 \mid \om(\fy,Y^+)=0\},}
which has one more orthogonality than \eqref{def YG} with the constant map $\ta=1:I\to\{1\}$. 
\begin{prop}
There exists a small constant $\de=\de(N,\al,p)>0$ such that for any $r>1/\de$ and $I\subset \{1\etc N\}$, there is a Lipschitz function $H:\Y_\perp^\xu(r;\de)_G\to(-\de,\de)^2$ with the following properties. 
For any $h\in(-\de,\de)^2$ and $\fy\in\Y_\perp^\xu(r;\de)_G$, 
let $u$ be the solution of \eqref{NLKG} with the initial data
\EQ{
 \vec u(0) = \sum_{i\in I}\sum_{\ta=\pm 1} [\vec Q+h_1 Y^+](x-\ta re_i)  - [\vec Q+h_2Y^+](x) + \fy \in \cH_G.}
If $h=H(\fy)$, $u$ is global and for some $r:[0,\I)\to(0,\I)$ satisfying $r(t)\to\I$,
\EQ{
 \vec u(t) = \sum_{i\in I}\sum_{\ta=\pm 1} \vec Q(x-\ta r(t)e_i) - \vec Q(x) + o(1) \IN{\cH}} 
as $t\to\I$, namely an asymptotic $(2|I|+1)$-soliton. 
If $h\not=H(\fy)$, then $u$ is either blowing up or $E(\vec u)<(2|I|+1)E(\vec Q)$ at some $t>0$, so it can not be in the above form. 
\end{prop}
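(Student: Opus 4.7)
The plan is to exploit the $G=G_R\rtimes G_I$ symmetry to collapse the $(2|I|+1)$-dimensional family of unstable modes to an effective $\R^2$, then adapt the topological existence argument of Section~\ref{ss:exist 2sol} together with a difference-estimate uniqueness argument in the style of Theorem~\ref{thm:1sol Lip}, both of which simplify considerably under $G$-equivariance.

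\textbf{Symmetry reduction and existence.} Since $\cH_G$ is preserved by \eqref{NLKG} and contains the multi-soliton configuration, the outer centers must stay of the form $\pm r(t)e_i$ with a single radius common to all $i\in I$, while the central soliton is pinned at $0$. Hence all outer unstable amplitudes coincide with one $h_1\in\R$ and the central amplitude is $h_2\in\R$, so the effective unstable subspace is $\R^2$. The strong repulsivity condition \eqref{repul strong} computed in the excerpt permits invoking Theorem~\ref{thm:dyn near Ksol} with $K=2|I|+1$ and an absolute $C_V$. The two-dimensional topological argument of Section~\ref{ss:exist 2sol}, applied to $h=(h_1,h_2)$ on the closed disk of radius $\de_\he$, yields at least one $h^*(\fy)\in\ba{B(\de_\he)}$ for which the solution remains forever in the small neighborhood; strong repulsivity and \eqref{dyn V} then force $D_z\to\I$, so Lemma~\ref{lem:conv Ksol} promotes the surviving solution to a genuine asymptotic $(2|I|+1)$-soliton.

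\textbf{Uniqueness and Lipschitz dependence of $H$.} Given two asymptotic multi-solitons $u^0,u^1\in\cH_G$, the difference $u'=u^1-u^0$ again lies in $\cH_G$ and in particular satisfies $\sI u'=u'$ for the spatial inversion around the origin, so the even-part cancellation engineered by hand in Section~\ref{sect:even decay} for the opposite-sign $2$-soliton is automatic here. Consequently the off-diagonal entries and the diagonal difference of the interaction matrix $\M^{+,+}$, restricted to the two symmetric unstable directions, are uniformly $L^1$-integrable in time. The argument of Section~\ref{ss:diff 2sol} then shows that the unstable part of $\vec u'(0)$ dominates $\|\vec u'(0)\|_\cH$ up to $O(\de)$; because both solutions are asymptotic multi-solitons with the same radiation $\fy$, that unstable part must vanish, yielding $h^*=:H(\fy)$ uniquely. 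The same quotient-style estimate gives Lipschitz continuity of $H$ in $\fy$.

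\textbf{Dichotomy for $h\neq H(\fy)$.} Apply Theorem~\ref{thm:dyn near Ksol} to $\vec u(0)$. If $T_3=\I$, then the solution stays forever in the neighborhood, and by strong repulsivity together with Lemma~\ref{lem:conv Ksol} it is an asymptotic $(2|I|+1)$-soliton; the uniqueness just established forces $h=H(\fy)$, a contradiction. Hence $T_3<\I$ and \eqref{energy dec} delivers
\EQ{
 E(\vec u(T_3)) \le (2|I|+1) E(\vec Q) - \tfrac{\al\nu^+}{4}|\de_2|^2 < (2|I|+1) E(\vec Q),
}
which is preserved by \eqref{Edecay} for all later times. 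Thus $u$ either blows up in finite time or persists globally with a strict drop of energy below the $(2|I|+1)$-soliton threshold; in either case it cannot coincide with the stated asymptotic form.

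\textbf{Main obstacle.} The delicate point is the uniqueness step: extracting enough cancellation from $G_R$-invariance in the presence of the central soliton, since the outer-central interactions decay only as $\eN(r)\sim 1/t$ and are not a priori integrable. The rescue is that the radial symmetry of $Q$ and $\phi$ combined with the $G_R$-invariance of $v$ renders the leading outer-central and outer-outer contributions to $\M^{+,+}$ genuine scalar functions of $(r,h_1,h_2,\fy)$ in the $(h_1,h_2)$-basis, precluding any rotation of the unstable direction analogous to the pathological ODE example in the introduction. Verifying that the sum of these contributions to $\M^{(1)}$ is controlled to the required precision, exactly as in Section~\ref{sect:even decay}, is the only point at which the computation departs materially from the $2$-soliton case.
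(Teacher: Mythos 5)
Your existence step (topological argument from Section \ref{ss:exist 2sol} plus strong repulsivity and Lemma \ref{lem:conv Ksol}) and your dichotomy step for $h\neq H(\fy)$ (if $T_3=\I$ then uniqueness forces $h=H(\fy)$; otherwise \eqref{energy dec} gives the energy drop, which is preserved by \eqref{Edecay}) match the paper and are fine. The problem is the uniqueness step.

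\textbf{The claimed cancellation does not exist.} You assert that because $u'\in\cH_G$ satisfies $\sI u'=u'$, the even-part cancellation of Section \ref{sect:even decay} is ``automatic.'' This inverts the mechanism. In the opposite-sign $2$-soliton case the gain came from $\sI Q_\Si\approx -Q_\Si$, which makes the soliton forcing $N^0$ approximately \emph{odd}, so that the \emph{even} part of the radiation --- and hence the difference $\M^*_1-\M^*_2$ of the diagonal interaction coefficients --- is driven only by integrable terms. Here $\sI\cQ_0^I=\cQ_0^I$ is \emph{even}, $N^0$ is even, and $v+\sI v=2v$ carries the full $O(\eN(r))\sim O(1/t)$ interaction; nothing improves. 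More to the point, the two effective unstable modes (outer vs.\ central) are not exchanged by any element of $G$: the geometry seen by an outer soliton (neighbors at distances $r$, $\sqrt2\,r$, $2r$) is entirely different from that seen by the central one ($2|I|$ neighbors at distance $r$), so the diagonal entries $\M^*_{\mathrm{outer}}-\M^*_{\mathrm{center}}$ differ by a genuine non-integrable $O(1/t)$ amount, with no sign or symmetry to exploit. This is precisely the pathological ODE of the introduction, and your ``Main obstacle'' paragraph does not resolve it: the entries being ``scalar functions of $(r,h_1,h_2,\fy)$'' does not make the $2\times2$ matrix a multiple of the identity, and the direction of $a^{+'}\in\R^2$ can rotate. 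The paper states this explicitly: the two unstable components ``do not have any apparent symmetry or cancellation as in the case of $2$-solitons.''

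\textbf{What the paper does instead.} It gives up on the direction of $a^{+'}$ entirely and tracks only the magnitude, via
\EQ{
 \bigl|\p_t|a^{+'}|-\nu^+|a^{+'}|\bigr|\le C_1\de_2\|v'\|_\cH,
}
combined with the bound on the non-unstable components $B$ from \eqref{def B}, in a short Gronwall-type bootstrap showing that once $|a^{+'}(0)|\ge C\de\|v'(0)\|_\cH$ the magnitude $|a^{+'}|$ grows exponentially. This is enough for uniqueness and Lipschitz continuity of $H$ (two forever-trapped solutions cannot have an exponentially growing difference), and it is enough because the negative conclusion of this proposition is deliberately weak --- no classification of \emph{which} soliton collapses or of decay vs.\ blow-up, only ``exits the neighborhood or blows up.'' You should replace your uniqueness argument by this magnitude-only estimate; as written, your proof of uniqueness rests on an $L^1$ bound on $\M^{(1)}$ that is false in this configuration.
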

\begin{proof}
The existence is the same as in the previous case. 
The difference estimate is more involved, as we have essentially two unstable components, which do not have any apparent symmetry or cancellation as in the case of $2$-solitons in Section \ref{sect:even decay}. 
However, if we do not care about the direction of the unstable modes, the difference estimates may be simpler than Section \ref{sect:diff Ksol}, and as in \cite{CMYZ}. Indeed, we have
\EQ{
 \p_t|a^{+'}|^2 = 2a^{+'}\cdot \dot a^{+'} = 2\nu^+|a^{+'}|^2 + O(|a^{+'}|(\|\M\|_{\op}\|v'\|_\cH+\|v'\|_\cH^2)),}
which may be rewritten under the assumption $\|\M\|_{\op}+\|v'\|_\cH\le \de\ll 1$ as
\EQ{ \label{est a+'}
 |\p_t |a^{+'}| - \nu^+|a^{+'}|| \le C_1\de_2\|v'\|_\cH. }
Henceforth $C_j$ ($j=1,2,3,4$) are some constants depending only on $N,\al,p$. 
For the other components, we have as before
\EQ{
 B \le C_2\|v'(0)\|_\cH + C_2\cI, \pq \cI:=\de\int_0^t\|v'(s)\|_\cH ds,}
where $B$ is defined in \eqref{def B}, satisfying $\|v'\|_\cH\le C_3(|a^{+'}|+B)$. 
Suppose that for $0\le t\le T$, it holds 
\EQ{ \label{Lip C4}
 C_4 \de\|v'\|_\cH \le |a^{+'}|.}
Then combining it with the above estimates, we have for $0\le t\le T$ 
\EQ{
 \pt C_4\de\|v'\|_\cH-|a^{+'}|  \le C_4C_3\de B+(C_4C_3\de-1)|a^{+'}|
 \pr\le C_4C_3C_2\de\|v'(0)\|_\cH + C_4C_3C_2\de\cI 
 \pn - |a^{+'}(0)| - (\nu^+C_4-C_1)\cI
 \pr\le C_4C_3C_2\de\|v'(0)\|_\cH - |a^{+'}(0)| < 0,}
provided that $\de C_3C_4<1$, $(\nu^+-\de C_2C_3)C_4 \ge C_1$ and
\EQ{ \label{init Lip}
 |a^{+'}(0)| > C_2C_3C_4\de\|v'(0)\|_\cH.}
The first two conditions are easily fulfilled by taking $\de>0$ small enough and then choosing $C_4$ big enough. Then by continuity in time, the estimate \eqref{Lip C4} is valid as long as $\|\M\|_{\op}+\|v'\|_\cH\le\de$, if the initial difference satisfies \eqref{init Lip}. 

This is enough for the uniqueness and Lipschitz continuity of $H$, and all the other solutions have to exit the soliton neighborhood in finite time, losing some amount of energy. 
\end{proof}
In contrast to the previous example of $\cQ_\ta$, the permutation symmetry of $\cQ_0^I$ by $G_I$ is stable in the sense that if the distance of the pairs on the $x_i$ axis depends on $i$, then those at the longer distance move the slower by the gradient flow $\dot z=-\na_z V(z)$.

\subsection{Dynamics around $3$-solitons and soliton merger}
For further consideration, let us restrict the above problem to the simplest case $N=1$. 
Then $\cH_G$ is just the energy space with the even symmetry on $\R$, and 
\EQ{
 \cQ_0^I(r) = Q(x-r) - Q(x) + Q(x+r).}
Suppose that $u$ starts from a small neighborhood of $\cQ_0^I(r)$ with $r\gg 1$ in $\cH_G$, and that it is not an asymptotic $3$-soliton. 
Then the soliton resolution of \cite{CMY} implies that $u$ is either decaying, blowing up, an asymptotic $1$-soliton or an asymptotic $2$-soliton.
The last option is precluded by the symmetry, since asymptotic $2$-solitons cannot be even due to $\s_1\s_2=-1$ as proven in \cite{CMY}. 

Let $a^+_0$ and $a^+_1$ be the unstable modes at $x=0$ and at $x=r$ respectively. If we choose initial data satisfying 
\EQ{ \label{Q0 col}
 1 \gg a^+_0(0) \gg |a^+_1(0)|+\|P^\xu_\perp(z)v(0)\|_\cH, \pq r(0)\gg|\log |a^+_0(0)||,}
Then by Theorem \ref{thm:col Ksol}, the soliton at $x=0$ collapses earlier, and $u$ becomes close to the $2$-soliton $Q(x-r)+Q(x+r)$ after some time. If we further impose the condition 
\EQ{
 1 \gg a^+_0(0) \gg |a^+_1(0)| \gg \|P^\xu_\perp(z)v(0)\|_\cH, \pq r(0)\gg|\log |a^+_1(0)||, }
then the solitons at $x=\pm r$ also collapse at later time and before the soliton starts attracting each other. So $u$ is decaying if $a^+_1(0)<0$ and blowing up if $a^+_1(0)>0$.  
Since the set of initial data of decaying and that of blow-up are both open in $N=1$, 
there must be other solutions within the connected set of initial data \eqref{Q0 col}, for which the only possibility is convergence to $\pm Q$. 

In short, for any $\de\in(0,1)$, there is $R\in(1,\I)$ such that for any $r>R$ there is a global even solution $u$ on $\R$ satisfying 
\EQ{
 \|\vec u(0)-Q(x-r)-Q(x+r)\|_\cH<\de, \pq \lim_{t\to\I}\|\vec u(t)- \s_*\vec Q\|_\cH=0}
with either $\s_*=1$ or $\s_*=-1$. Because of the even symmetry, both the solitons at $x=\pm r$ have to collapse, but then another soliton at $x=0$ emerges, taking energy from the two collapsing solitons or the aftermath.  
This behavior is totally different from the asymptotic $1$-solitons in Theorem \ref{thm:main}, in which one of the two solitons collapses and disappears, while the other remains for all time (cf.~Theorem \ref{thm:col Ksol}).

\end{document}